\newtheorem{thm}{Theorem}
\newtheorem{propo}{Proposition}
\newtheorem{rmq}{Remark}
\newtheorem{lemma}{Lemma}
\newenvironment{proof}[1][Proof]{\noindent\textbf{#1:} }{\Fin}
\def\dis{\displaystyle}
\def\Om{\Omega}
\def\Ombar{\overline{\Omega}}
\def\om{\omega}
\def\omvec{\bm{\omega}}
\def\nvec{\mathbf{n}}
\def\yvec{\mathbf{y}}
\def\xvec{\mathbf{x}}
\def\vvec{\mathbf{v}}
\def\0vec{\mathbf{0}}
\def\zvec{\mathbf{z}}
\def\vvec{\mathbf{v}}
\def\Cvec{\mathbf{C}}
\def\Mvec{\mathbf{M}}
\def\Avec{\mathbf{A}}
\def\Yvec{\mathbf{Y}}
\def\Rvec{\mathbf{R}}
\def\Svec{\mathbf{S}}
\def\Supp{\hbox{\rm Supp\,}}
\def\Zvec{\mathbf{Z}}
\def\fvec{\mathbf{f}}
\def\wvec{\mathbf{w}}
\def\Wvec{\mathbf{W}}
\def\uvec{\mathbf{u}}
\def\gvec{\mathbf{g}}
\def\avec{\mathbf{a}}
\newcommand{\Fin}{\hfill$\Box$}
\newcommand{\N}{\mbox{$I \kern -4pt N$}}
\newcommand{\Q}{\mbox{$Q \kern -8pt I$}}
\newcommand{\R}{\mbox{$I \kern -4pt R$}}
\newcommand{\C}{\mbox{$C \kern -8pt I$}}
\newcommand{\supp}{\operatorname{supp}}
\newcommand{{\resizebox{}{!}{\input .pstex_t}}}[2]{{\resizebox{#1}{!}{\input #2.pstex_t}}}
\def\R{{\bf R}}
\begin{document}

\title{\bf{Boundary controllability of incompressible Euler fluids with Boussinesq heat effects}}

	 \author{Enrique \textsc{Fern\'andez-Cara}
	 \thanks{
	 Dpto.\ EDAN, University of Sevilla, Aptdo.~1160, 41080~Sevilla, Spain. E-mail: {\tt cara@us.es}.
	 Partially supported by grant~MTM2010-15592 (DGI-MICINN, Spain).
	 },\ \
	 Maur\'icio C. \textsc{Santos} 
	 \thanks{
	 Departamento de Matem\'{a}tica, Universidade Federal da Para\'iba, 58051-900,
	 Jo\~{a}o Pessoa--PB, Brasil, E-mail: {\tt mcardoso.pi@gmail.com}. Partially supported by CAPES
	 },\ \
	 Diego A. \textsc{Souza}
	 \thanks{
	 Dpto.\ EDAN, University of Sevilla, 41080~Sevilla, Spain and
	 Departamento de Matem\'{a}tica, Universidade Federal da Para\'iba, 58051-900, Jo\~{a}o Pessoa--PB,
	 Brasil. E-mail: {\tt desouza@us.es}. Partially supported by CAPES (Brazil) and grant~MTM2010-15592
	 (DGI-MICINN, Spain).
	 }}
	 
\date{}
	 
\maketitle

\begin{abstract}
	This paper deals with the boundary controllability of inviscid incompressible fluids for which thermal effects are important.
	They will be modeled through the so called Boussinesq approximation. In the zero heat diffusion case, by adapting and 
	extending some ideas from J.-M.~Coron and O.~Glass, we establish 
	the simultaneous global exact controllability of the velocity field and the temperature for 2D and 3D flows.
	When the heat diffusion coefficient is positive, we present some additional results concerning exact controllability for the velocity field 
	and local null controllability of the temperature.
\end{abstract}

\noindent
\textbf{Keywords:} Controllability, boundary control, Euler equation, inviscid Boussinesq system
\vskip 0.25cm

\noindent
\textbf{Mathematics Subject Classification (2010)-}  93B05, 35Q30, 76C99, 93C20

%%%%%%%%%%%%%%%%%%%%%%%%%%%%%%%%%%%%%%%%%%
%%%% SECTION 1
%%%%%%%%%%%%%%%%%%%%%%%%%%%%%%%%%%%%%%%%%%

\section{Introduction}\label{Sec1}
	Let $\Om \subset \mathbb{R}^N$ be a nonempty, bounded and connected open set whose boundary $\Gamma := \partial\Om$ is of 
	class~$C^{\infty}$, with $N=2$ or $N=3$.	Let $\Gamma_0\subset \Gamma$ be a (small) nonempty open set and let us assume
	that $T>0$. For simplicity, we assume that $\Om$ is simply connected.
	
	In the sequel, we will denote by $C$ a generic positive constant; spaces of $\mathbb{R}^N$-valued functions, as well as their 
	elements, are represented by boldfaced letters; we will denote by $\nvec=\nvec(\xvec)$ the outward unit normal to 
	$\Om$ at points $\xvec\in\Gamma$.
	
	In this work, we will be concerned with the boundary controllability of the system:
\begin{equation}\label{inv-bous}
	\left\{
		\begin{array}{lcl}
			\yvec_t + (\yvec \cdot \nabla) \yvec
			= -\nabla p + \mathbf{k} \, \theta  							&\hbox{ in }& \Om\times(0,T),	\\
			 \nabla \cdot \yvec = 0  										&\hbox{ in }& \Om\times(0,T),	\\
			 \theta_t + \yvec \cdot \nabla \theta = \kappa\, \Delta \theta  			&\hbox{ in }& \Om\times(0,T),	\\
			 \yvec \cdot \nvec = 0	  									&\hbox{ on }& (\Gamma\backslash\Gamma_0)\times(0,T),	\\
			 \yvec(\xvec,0) = \yvec_0(\xvec), \ \ \theta(\xvec,0) = \theta_0(\xvec)   		&\hbox{ in } & \Om.		\\
		\end{array}
	\right.
\end{equation}

	This system models the behavior of an incompressible homogeneous inviscid fluid with thermal effects.
	More precisely,
\begin{itemize}
	\item The field $\yvec$ and the scalar function $p$ stand for the velocity and the pressure of the fluid in~$\Om\times(0,T)$, respectively.
	
	\item	 The function $\theta$ provides the temperature distribution of the fluid.
	
	\item The right hand side $\mathbf{k} \, \theta$ can be viewed as the {\it buoyancy force} density 
		($\mathbf{k} \in \mathbb{R}^N$ is a non-zero vector).
	
	\item The nonnegative constant $\kappa \geq0$ is the heat diffusion coefficient.
\end{itemize}

	This system is relevant for the study and description of atmospheric and oceanographic turbulence, as well as other fluid problems 
	where rotation and stratification play dominant roles (see e.g.~\cite{Pedlo}). In fluid mechanics, \eqref{inv-bous} is used to deal with 
	buoyancy-driven flow; it describes the motion of an incompressible inviscid fluid subject to convected heat transfer under the 
	influence of gravitational forces, see~\cite{Majda-Andrew}.
		
	We will consider the cases $\kappa = 0$ and $\kappa > 0$. When $\kappa=0$,  \eqref{inv-bous} is called the 
	{\it incompressible inviscid Boussinesq} system.
	
	From now on, we assume that $\alpha\in (0,1)$ and we set
\begin{equation}\label{C-space}
\begin{alignedat}{2}
	&\Cvec^{m,\alpha}_0(\Ombar ;\mathbb{R}^N):=\{\, \uvec\in \Cvec^{m,\alpha}(\Ombar ;\mathbb{R}^N)\,:\,  \nabla \cdot \uvec =0 \ \hbox{ in } \ 
	\Ombar, \ \ \uvec\cdot \nvec =0  \ \hbox{ on } \ \Gamma \,\},\\
	&\mathbf{C}(m,\alpha,\Gamma_0):=\{\, \uvec\in \Cvec^{m,\alpha}(\Ombar ;\mathbb{R}^N)\,:\,  \nabla \cdot \uvec =0 \ \hbox{ in } \ \Ombar , \ \ 
	\uvec\cdot \nvec =0  \ \hbox{ on } \ \Gamma\backslash\Gamma_0 \,\},
\end{alignedat}
\end{equation}
	where $ \Cvec^{m,\alpha}(\Ombar ;\mathbb{R}^N)$ denotes the space of $\mathbb{R}^N$-valued functions whose $m$-th 
	order derivatives are {\it H\"older-continuous} in~$\Ombar$ with exponent $\alpha$.
	The usual norms in the Banach spaces $\Cvec^0(\Ombar ;\mathbb{R}^\ell)$ and~$\Cvec^{m,\alpha}(\Ombar ;\mathbb{R}^\ell)$ 
	will be respectively denoted by $\| \cdot \|_0$ and~$\| \cdot \|_{m,\alpha}$.
	We will also need to work with the Banach spaces $C^0([0,T];\Cvec^{m,\alpha}(\Ombar;\mathbb{R}^\ell))$, where the usual norms are
\[
	\| \wvec \|_{0,m,\alpha} := \max_{[0,T]} \| \wvec(\cdot\,,t) \|_{m,\alpha}.
\]
	In particular, $\|\cdot\|_{(0)}$ will stand for $\|\cdot\|_{0,0,0}$.
	
	When $\kappa=0$, it is appropriate to consider the exact boundary controllability problem for \eqref{inv-bous}.
	In general terms, it can be stated as follows:	
%%%% ATTENTION: BOUNDARY CONDITIONS FOR INITIAL AND FINAL TEMPERATURES? NO 
%%
%%

\begin{quote}{\it
	Given $\yvec_0$, $\yvec_1$, $\theta_0$ and~$\theta_1$ in appropriate spaces with $\yvec_0\cdot \nvec=\yvec_1\cdot \nvec=0$ on~$\Gamma\backslash\Gamma_0$, find $(\yvec,p,\theta)$ such that \eqref{inv-bous} holds and, furthermore,
\begin{equation}\label{exact_condition}
	\yvec(\xvec,T) = \yvec_1(\xvec),~~\theta(\xvec,T) = \theta_1(\xvec) \ \hbox{ in } \ \Om.
\end{equation}
}
\end{quote}

	If it is always possible to find $\yvec$, $p$ and $\theta$, it will be said that the incompressible inviscid Boussinesq system is 
	{\it exactly controllable} for $(\Om,\Gamma_0)$ at time $T$.
		
	Notice that, when $\kappa = 0$, in order to determine without ambiguity a unique local in time regular solution to~\eqref{inv-bous}, 
	it is sufficient to prescribe the normal component of the velocity on the boundary of the flow region and, for instance, the full field $\yvec$ 
	and the temperature $\theta$ on the inflow section, i.e.~only where $\yvec\cdot\nvec < 0$, see for instance~\cite{Kazhikhov-1, Kazhikhov-2}.
	Hence, in this case, we can assume that the controls are given as follows:
\begin{equation}\label{3a}
\left\{
\begin{array}{ll}
	\dis \yvec\cdot \nvec\hbox{ on }\Gamma_0\times(0,T),\hbox{ with }\dis\int_{\Gamma_0}\yvec\cdot \nvec \,d\Gamma=0; \\
	\dis \yvec \hbox{ and } \theta \hbox{ at any point of }\Gamma_0\times (0,T) \hbox{ satisfying }\yvec\cdot \nvec<0. \\
	\end{array}
\right.
\end{equation}

	Other choices are possible.
	In any case, once we find a trajectory satisfying~\eqref{inv-bous}  and~\eqref{exact_condition}, there always exists good boundary conditions that furnish controls that drive the state $(\mathbf{y},\theta)$ exactly to the desired target $(\mathbf{y}_1,\theta_1)$.

	The meaning of the exact controllability property is that, when it holds, we can drive the fluid from any initial state $(\yvec_0,\theta_0)$ 
	exactly to any final state $(\yvec_1,\theta_1)$, acting only on an arbitrary small part $\Gamma_0$ of the boundary during an arbitrary 
	small time interval $(0,T)$.
	
	When $\kappa>0$, the situation is different. Due to the {\it regularization effect} of the temperature equation, we cannot expect 
	exact controllability, at least for the temperature.
	
	In order to present a suitable boundary controllability problem, let us introduce another nonempty open set $\gamma \subset \Gamma$.
	Then, the problem is the following:
\begin{quote}{\it
	Given $\yvec_0$, $\yvec_1$ and~$\theta_0$ in appropriate spaces with $\yvec_0\cdot \nvec=\yvec_1\cdot \nvec=0$ on
	~$\Gamma\backslash\Gamma_0$ and~$\theta_0=0$ on $\Gamma\backslash\gamma$, find $(\yvec,p,\theta)$ with $\theta=0$ on 
	$(\Gamma\backslash\gamma)\times(0,T)$ such that \eqref{inv-bous} holds and, furthermore,
\begin{equation}\label{null_exact_condition}
	\yvec(\xvec,T) = \yvec_1(\xvec),~~\theta(\xvec,T) = 0 \ \hbox{ in } \ \Om.
\end{equation}
		}
\end{quote}

	If it is always possible to find $\yvec$, $p$ and $\theta$, it will be said that the incompressible, heat diffusive, inviscid Boussinesq system \eqref{inv-bous} is {\it exactly-null controllable} for $(\Om,\Gamma_0,\gamma)$ at time~$T$.
	
	Note that, if $\kappa > 0$ and we fix the boundary data in~\eqref{3a} for $\yvec$ and (for example) Dirichlet data for $\theta$ of the form
	$$
\theta = \theta_* 1_{\gamma} \ \text{ on } \ \Gamma \times (0,T),
	$$
there exists at most one solution to~\eqref{inv-bous}.
	Therefore, it can be assumed in this case that the controls are the following:
\[
\left\{
\begin{array}{ll}
	\dis \yvec\cdot \nvec\hbox{ on }\Gamma_0\times(0,T),\hbox{ with }\dis\int_{\Gamma_0}\yvec\cdot \nvec \,d\Gamma=0;\\
	\dis \yvec\hbox{ at any point of }\Gamma_0\times (0,T) \hbox{ satisfying }\yvec\cdot \nvec<0;\\
	\dis \theta\hbox{ at any point of }\gamma\times (0,T).
	\end{array}
\right.
\]

	Of course, the meaning of the exact-null controllability property is that, when it holds, we can drive the fluid velocity-temperature pair from any initial state $(\yvec_0,\theta_0)$ exactly to any final state of the form $(\yvec_1,0)$, acting only on arbitrary small parts $\Gamma_0$ and $\gamma$ of the boundary during an arbitrary small time interval $(0,T)$.
	
	In the last decades, a lot of researchers have focused their attention on the controllability of systems governed by (linear and nonlinear) PDEs.	
	Some related results can be found in~\cite{ref1,ref2,ref3,ref4}. In the context of incompressible ideal fluids, this subject has
	been mainly investigated by Coron~\cite{Coron2,Coron3} and~Glass~\cite{Glass1,Glass2,Glass3}.
	
	In this paper, our first task will be to adapt the techniques and arguments of~\cite{Coron3} and~\cite{Glass3} to the situations modeled by~\eqref{inv-bous}.
	Thus, our first main result is the following:
	
\begin{thm}\label{T-INV-BOUS}
	If $\kappa=0$, then the incompressible inviscid Boussinesq system \eqref{inv-bous} is exactly controllable for $(\Om, \Gamma_0)$ 
	at any time~$T>0$. More precisely, for any $\yvec_0, \yvec_1\in \mathbf{C}(2,\alpha,\Gamma_0)$ and  any 
	$\theta_0, \theta_1\in C^{2,\alpha}(\Ombar)$, there exist $\yvec\in C^0([0,T]; \mathbf{C}(1,\alpha,\Gamma_0))$, 
	$\theta\in C^{0}([0,T]; C^{1,\alpha}(\Ombar ))$ and $p\in \mathcal{D'}(\Om\times (0,T))$ such that one has \eqref{inv-bous} 
	and~\eqref{exact_condition}.
\end{thm}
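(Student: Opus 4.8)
The plan is to reduce the exact controllability of the Boussinesq system to the exact controllability of the Euler equation already understood by Coron and Glass, treating the temperature as a passive scalar transported along the flow and the buoyancy term $\mathbf{k}\,\theta$ as a controlled forcing in the momentum equation. The starting observation is that, since $\Gamma_0\neq\emptyset$ and we have freedom in prescribing $\yvec\cdot\nvec$ on $\Gamma_0$, we may first build a reference flow that carries every fluid particle out of $\overline\Om$ through $\Gamma_0$ before time $T$: this is exactly the key geometric ingredient (the ``extension and return'' construction) from Coron's work on the Euler equations, using the hypothesis that $\Om$ is simply connected so that curl-free, divergence-free fields with prescribed flux are abundant. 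Along such a flow the transport equation $\theta_t+\yvec\cdot\nabla\theta=0$ has a solution whose value at time $T$ can be made to equal any prescribed $\theta_1$, because every characteristic either enters through $\Gamma_0$ (where we are free to impose the inflow value of $\theta$) or is present at time $0$; one runs the characteristics backward from $t=T$ with data $\theta_1$ and forward from $t=0$ with data $\theta_0$, and reconciles them on the part of $\Om$ swept by particles that stay inside, using the flushing property.

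Concretely, I would proceed as follows. First, fix a smooth ``return'' trajectory $\bar\yvec\in C^0([0,T];\mathbf C(1,\alpha,\Gamma_0))$ with $\nabla\cdot\bar\yvec=0$, $\bar\yvec\cdot\nvec=0$ on $\Gamma\backslash\Gamma_0$, such that the associated flow $\Phi(t,s,\cdot)$ satisfies: for each $\xvec\in\overline\Om$ there is a time $t_{\xvec}\in(0,T)$ at which the particle leaves $\overline\Om$. Such $\bar\yvec$ exists by the Coron--Glass argument (extend $\Om$ to a larger domain $\widetilde\Om$, use a fixed nontrivial harmonic field there, and exploit that the flux through $\Gamma_0$ is unconstrained in sign locally). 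Second, solve the temperature equation by the method of characteristics along $\Phi$: set $\theta(\xvec,t):=\theta_0(\Phi(0,t,\xvec))$ wherever the backward characteristic reaches $t=0$ inside $\Om$, and prescribe the inflow data on $\Gamma_0$ so that, for particles present in $\Om$ at time $T$, $\theta(\cdot,T)=\theta_1$; the flushing property guarantees these two prescriptions never conflict, and one checks $\theta\in C^0([0,T];C^{1,\alpha}(\overline\Om))$ from the regularity of $\Phi$ and of the data. Third, with this $\theta$ now a known $C^{1,\alpha}$ function, treat $\mathbf F:=\mathbf k\,\theta$ as a given forcing and apply the exact boundary controllability result for the Euler equations with a source term: find $\yvec\in C^0([0,T];\mathbf C(1,\alpha,\Gamma_0))$, close to $\bar\yvec$, with $\yvec_t+(\yvec\cdot\nabla)\yvec=-\nabla p+\mathbf F$, $\nabla\cdot\yvec=0$, $\yvec\cdot\nvec=0$ on $\Gamma\backslash\Gamma_0$, $\yvec(\cdot,0)=\yvec_0$, $\yvec(\cdot,T)=\yvec_1$. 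Since $\theta$ depends on the flow of $\yvec$ and not of $\bar\yvec$, this last step is actually a fixed-point (or careful iterative) argument: one sets up the map $\yvec\mapsto\theta\mapsto\yvec$ and shows it has a fixed point in a suitable closed ball of $C^0([0,T];\mathbf C(1,\alpha,\Gamma_0))$, using that a small perturbation of a flushing flow is still flushing and that the Euler control operator depends continuously on the forcing.

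The main obstacle — and the heart of the proof — is precisely this coupling. In the pure Euler case the flushing flow is constructed once and for all; here the transport of $\theta$, hence the buoyancy forcing, hence the corrected velocity, all move together, so one must show that the Coron--Glass construction is stable under the feedback $\yvec\rightsquigarrow\theta\rightsquigarrow\mathbf k\theta$. I expect to handle this by working on a short time interval (or rescaling, exploiting that $T>0$ is arbitrary and the equations are invariant under an appropriate scaling that makes the forcing small), so that the buoyancy term is a genuinely small perturbation and the fixed-point map is a contraction, or by a Schauder-type argument using compactness of the relevant inclusions of Hölder spaces and a priori bounds on the flow independent of the iteration. A secondary technical point is the limited regularity: the data $\yvec_0,\yvec_1$ lie in $\mathbf C(2,\alpha,\Gamma_0)$ but the controlled solution is only asserted in $C^0([0,T];\mathbf C(1,\alpha,\Gamma_0))$, so one loses one derivative through the control construction, and one must make sure the characteristics defining $\theta$ are still $C^{1}$ in space — which holds because a $C^{1,\alpha}$ velocity field has a $C^{1,\alpha}$ flow — so that $\theta_1\in C^{2,\alpha}$ yields $\theta(\cdot,T)\in C^{1,\alpha}$ as claimed. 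The pressure $p$ is then recovered as a distribution from the momentum equation, with no further regularity needed.
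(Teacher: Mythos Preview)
Your proposal has the right ingredients --- the Coron--Glass flushing flow, transport of $\theta$ along characteristics, a fixed-point to close the coupling, and a scaling to make the buoyancy small --- but the paper organizes the argument quite differently, and the difference is instructive. Rather than steering $(\yvec_0,\theta_0)$ directly to $(\yvec_1,\theta_1)$, the paper first proves a \emph{local null} result (Proposition~\ref{P-INV-BOUS}): data with $\max\{\|\yvec_0\|_{2,\alpha},\|\theta_0\|_{2,\alpha}\}\le\delta$ can be driven to $(\0vec,0)$ at time~$1$. Theorem~\ref{T-INV-BOUS} then follows by a short scaling-plus-time-reversal trick: rescale $(\yvec_0,\theta_0)$ by $(\varepsilon,\varepsilon^2)$ to land below $\delta$, apply the proposition to reach zero on $[0,\varepsilon]$, sit at zero on $(\varepsilon,T-\varepsilon)$, and run a time-reversed copy backward from $(\yvec_1,\theta_1)$ on $[T-\varepsilon,T]$. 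This cleanly separates the ``make the coupling small'' step from the analytical core, and avoids having to hit a nonzero $\theta_1$ at time $T$. The local proposition itself is not proved by your iteration $\yvec\mapsto\theta\mapsto\yvec$ with ``Euler controllability with source'' as a subroutine; instead the paper passes to the vorticity formulation, extends everything to a larger domain $\Om_3\supset\!\supset\Om$ (so the transport equations for $\theta$ and $\zeta$ need no inflow boundary prescription at all), and runs a contraction in $C^0([0,1];\Cvec^{1,\alpha})$ on the map $\zvec\mapsto(\theta,\zeta)\mapsto\yvec$ near the return trajectory $\overline\yvec$. The flushing property then makes $\theta\equiv0$ on $\Ombar$ after $t=1/2$ and $\zeta\equiv0$ at $t=1$ automatically, because their supports are carried out of $\Ombar_2$. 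Your direct route is plausible, but note that ``Euler exact controllability with a given $C^{1,\alpha}$ source'' is not an off-the-shelf statement --- proving it essentially forces you to redo the Coron--Glass construction anyway, which is what the paper does by building $(\theta,\zeta,\yvec)$ simultaneously rather than sequentially.
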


	The proof of Theorem~\ref{T-INV-BOUS} mainly relies on the {\it extension} and {\it return} methods.
		
	These have been applied in several different contexts to establish controllability;
	see the seminal works~\cite{Russell-1} and~\cite{Coron1};
	see also a long list of applications in~\cite{ref1}.

	Let us give a sketch of the strategy used in the proof of Theorem~\ref{T-INV-BOUS}:
	
\begin{itemize}
	\item First, we construct a ``good" trajectory connecting  $(\0vec,0)$ to $(\0vec,0)$ (see Sections~\ref{sec-2-traj} and~\ref{sec-3-traj}).
	
	\item Then, we apply the extension method of David L. Russell~\cite{Russell-1};
	see also~\cite{Littman, Russell-2}.
	It is known that this method turns out to be very useful for a lot of hyperbolic
	(linear and nonlinear) PDEs.
	
	\item Then, we use a {\it Fixed-Point Theorem} and we deduce a local exact controllability result.
	
	\item Finally, we use time scale-invariance and the reversibility in time to obtain the desired global result.
\end{itemize}

	In fact, Theorem~\ref{T-INV-BOUS} is a consequence of the following local result:
	
\begin{propo}\label{P-INV-BOUS}
	Let us assume that $\kappa=0$.
	There exists $\delta>0$ such that, for any $\yvec_0\in \Cvec(2,\alpha,\Gamma_0)$ and any $\theta_0\in C^{2,\alpha}(\Ombar)$ with
$$
\max\left\{\|\yvec_0\|_{2,\alpha},\|\theta_0\|_{2,\alpha}\right\} \leq \delta,
$$
	there exist $\yvec\in C^0([0,1];\Cvec(1,\alpha,\Gamma_0))$, $\theta\in C^0([0,1]; C^{1,\alpha}(\Ombar ))$ and 
	$p\in\mathcal{D}'(\Om \times (0,1))$ satisfying \eqref{inv-bous} in~$\Om\times(0,1)$ and the final conditions
\begin{equation}\label{cont-local}
	\yvec (\xvec,1) = \0vec,~~\theta(\xvec,1) = 0~\hbox{ in }~\Om.
\end{equation}
\end{propo}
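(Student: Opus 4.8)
The plan is to follow the classical extension--return--fixed-point scheme of Coron and Glass, adapted to carry the temperature along. First I would construct a reference trajectory: a smooth solution $(\overline{\yvec},\overline{p},\overline{\theta})$ of \eqref{inv-bous} on a larger, smooth, simply connected domain $\widetilde{\Om}\supset\supset\Om$ with $\Om\setminus\overline{\Om}$ "touching" $\Gamma\setminus\Gamma_0$ only through $\Gamma_0$, such that $\overline{\yvec}(\cdot,0)=\overline{\yvec}(\cdot,1)=\0vec$, $\overline{\theta}(\cdot,0)=\overline{\theta}(\cdot,1)=0$, and such that every fluid particle sitting in $\overline{\Om}$ at time $t=1/3$ (say) is flushed out through $\Gamma_0$ before time $t=2/3$ and, conversely, the flow is "onto": every point of $\overline{\Om}$ at a late time was outside $\overline{\Om}$ at an earlier time. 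The simplest choice is $\overline{\theta}\equiv 0$ together with $\overline{\yvec}=\nabla\overline{h}$ for a harmonic potential $\overline{h}$ on $\widetilde{\Om}$ modulated by a time cutoff; since $\overline{\theta}\equiv 0$ kills the buoyancy term, this is an honest Euler trajectory, and the flushing/covering properties are obtained exactly as in \cite{Coron3,Glass3} by choosing $\overline{h}$ appropriately (this is the content of Sections~\ref{sec-2-traj}--\ref{sec-3-traj} referenced in the excerpt).

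Next I would linearize around this reference trajectory and set up the fixed-point. Given a small perturbation of the data $(\yvec_0,\theta_0)$, I would look for a solution on $\widetilde{\Om}\times(0,1)$ of the form $(\overline{\yvec}+\yvec,\overline{p}+p,\overline{\theta}+\theta)$. Writing $\hat{\yvec}=\overline{\yvec}+\yvec$ for the total velocity and $\hat{\theta}=\overline{\theta}+\theta$ for the total temperature, the key structural point is that the vorticity $\omvec=\nabla\times\hat{\yvec}$ (in 2D, the scalar $\om$) satisfies a transport equation with source $\nabla\times(\mathbf{k}\,\hat{\theta})$, and $\hat{\theta}$ satisfies a pure transport equation $\hat{\theta}_t+\hat{\yvec}\cdot\nabla\hat{\theta}=0$ (here $\kappa=0$). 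I would define a map $\Lambda$ sending a given velocity field $\hat{\yvec}^{\,*}$ (in a suitable closed ball of $C^0([0,1];\Cvec^{1,\alpha})$ on the extended domain, with prescribed flushing behaviour inherited from $\overline{\yvec}$) to the new velocity field $\hat{\yvec}$ obtained by: (i) solving the transport equation for $\hat{\theta}$ along the flow of $\hat{\yvec}^{\,*}$ with initial datum the extension of $\theta_0$ and with $\hat{\theta}$ prescribed (as zero, matching $\overline{\theta}$) on the inflow part of $\partial\widetilde{\Om}$; (ii) solving the vorticity transport equation along the flow of $\hat{\yvec}^{\,*}$ with source built from $\hat{\theta}$, initial datum $\nabla\times$ of the extended $\yvec_0$, and zero vorticity prescribed on the inflow boundary; (iii) reconstructing $\hat{\yvec}$ from $\omvec$ via the div-curl elliptic system on $\widetilde{\Om}$ with $\hat{\yvec}\cdot\nvec$ prescribed on $\partial\widetilde{\Om}$ so as to match $\overline{\yvec}$ there, using simple connectedness to make this well-posed. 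The crucial design choice, exactly as in Coron--Glass, is that because the reference flow flushes $\overline{\Om}$ and is onto, at the final time the transported quantities $\hat{\theta}(\cdot,1)$ and $\omvec(\cdot,1)$ on $\Om$ are determined entirely by boundary inflow data, which we have set to coincide with those of the reference trajectory; hence $\hat{\theta}(\cdot,1)=0$ and $\omvec(\cdot,1)=\0vec$ on $\Om$, and together with $\hat{\yvec}(\cdot,1)\cdot\nvec$ matching we get $\hat{\yvec}(\cdot,1)=\0vec$ on $\Om$, which is precisely \eqref{cont-local}. Restricting the resulting fields to $\Om$ and reading off the boundary traces on $\Gamma_0$ then yields the controls in the sense of \eqref{3a}.

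To run the fixed point I would verify that $\Lambda$ maps a small closed ball of $C^0([0,1];\Cvec^{1,\alpha}(\overline{\widetilde{\Om}}))$ into itself and is continuous (or contractive) there, for $\delta$ small, using: the flow-map estimates for transport equations with $C^{1,\alpha}$ velocity fields (existence, uniqueness, and $C^{1,\alpha}$ bounds on characteristics on the time interval $[0,1]$, with the flushing property stable under small perturbation because it is an open condition on the flow), Hölder elliptic regularity for the div-curl reconstruction in step (iii), and the smallness of $\|\yvec_0\|_{2,\alpha}$, $\|\theta_0\|_{2,\alpha}$ to close the ball. Then Schauder's fixed point theorem (compactness coming from the $2,\alpha\to 1,\alpha$ gain, since the reconstructed field is actually one derivative smoother than the topology of the ball) produces a fixed point, i.e.\ a genuine solution on $\widetilde{\Om}\times(0,1)$; its restriction to $\Om$ gives $(\yvec,p,\theta)$ with the claimed regularity $\yvec\in C^0([0,1];\Cvec(1,\alpha,\Gamma_0))$, $\theta\in C^0([0,1];C^{1,\alpha}(\overline{\Om}))$, $p\in\mathcal{D}'$, satisfying \eqref{inv-bous} and \eqref{cont-local}.

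The main obstacle I expect is controlling the coupling: unlike the pure Euler case, the vorticity source $\nabla\times(\mathbf{k}\,\hat{\theta})$ involves first derivatives of $\hat{\theta}$, so one must propagate $C^{1,\alpha}$ (indeed essentially $C^{2,\alpha}$ at the level of the data) regularity of $\hat\theta$ along a merely $C^{1,\alpha}$ flow and feed it into the vorticity equation without losing derivatives — this forces a careful choice of the functional setting (working with $C^{2,\alpha}$ data and $C^{1,\alpha}$ solutions, and tracking that the transport of $\nabla\hat{\theta}$ stays bounded). A secondary technical point is ensuring the extension of the data to $\widetilde{\Om}$ preserves the divergence-free and boundary conditions and that the reference potential $\overline{h}$ can simultaneously be harmonic, produce the flushing and covering properties, and vanish at $t=0,1$; this is handled as in \cite{Coron3,Glass3}. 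Everything else — the div-curl reconstruction, the Schauder argument, reading off the controls — is by now standard.
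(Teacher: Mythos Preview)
Your overall strategy matches the paper's: reference trajectory $(\overline{\yvec},\overline{p},0)$ from a time-modulated harmonic potential with the flushing property, extension to a larger domain, and a fixed-point on the velocity via the vorticity formulation (transport $\theta$ along the given field, transport the vorticity with source $-\mathbf{k}\times\nabla\theta$, then div--curl reconstruction), the final conditions \eqref{cont-local} coming from the flushing.

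The one real gap is the fixed-point mechanism. You invoke Schauder with ``compactness coming from the $2,\alpha\to1,\alpha$ gain,'' but you place the ball in $C^0([0,1];\Cvec^{1,\alpha})$. With a merely $C^{1,\alpha}$ transporting field the flow map is only $C^{1,\alpha}$, so the vorticity you recover is only $C^{0,\alpha}$ and the reconstructed velocity lands back in $C^{1,\alpha}$: there is no gain, hence no compactness. Even if you move the ball to $C^{2,\alpha}$ (so that the output is genuinely $C^{2,\alpha}$), a purely spatial gain does not make bounded sets of $C^0([0,1];C^{2,\alpha})$ relatively compact in $C^0([0,1];C^{1,\alpha})$; you would additionally need a uniform time-derivative bound and an Aubin--Lions argument, which you have not supplied.

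The paper avoids compactness entirely. It takes the ball $\Svec_\nu$ in $C^0([0,1];\Cvec^{2,\alpha})$, shows $F(\Svec_\nu)\subset\Svec_\nu$ for small data (Lemma~\ref{Sr}), and then proves via Gronwall estimates on the difference of two vorticities (Lemmas~\ref{L-X-est} and~\ref{iteracion}) that some iterate $F^m$ is a contraction in the weaker $C^0([0,1];\Cvec^{1,\alpha})$ norm. Theorem~\ref{fixedpoint}, a Banach-type result, then yields a unique fixed point in the $C^{1,\alpha}$-closure of $\Svec_\nu$. A minor further difference: the paper extends the data with compact support inside an intermediate domain $\Om_2\subset\subset\Om_3$, so the transport equations on $\Om_3$ require no boundary conditions at all, rather than your inflow-prescription setup; the two-stage timing ($\theta$ flushed by $t=1/2$, then $\zeta$ by $t=1$) is also made explicit there.
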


	It will be seen later that, in our argument, the $C^{2,\alpha}$-regularity of the initial and final data is needed.
	However, we can only ensure the existence of a controlled solution that is $C^{1,\alpha}$ in space.

	Our second main result is the following:
	
\begin{thm}\label{T-HEAT-INV-BOUS}
	Let $\Om$, $\Gamma_0$ and~$\gamma$ be given and let us assume that $\kappa>0$.
	Then \eqref{inv-bous} is locally exactly-null controllable.
	More precisely, for any $T>0$ and any~$\yvec_0, \yvec_1\in \Cvec_0^{2,\alpha}(\Ombar;\mathbb{R}^N)$, there exists $\eta > 0$, 
	depending on $\|\yvec_0\|_{2,\alpha}$, such that, for each $\theta_0\in C^{2,\alpha}(\Ombar )$ with
\[
\theta_0=0 \ \text{ on }\ \Gamma\backslash\gamma, \ \ \|\theta_0\|_{2,\alpha} \leq \eta,
\]
	we can find $\yvec\in C^0([0,T]; \Cvec^{1,\alpha}(\Ombar;\mathbb{R}^N)$, $\theta\in C^{0}([0,T]; C^{1,\alpha}(\Ombar))$ with 
	$\theta=0$ on $(\Gamma\backslash\gamma)\times(0,T)$, and $p\in \mathcal{D'}(\Om\times (0,T))$ satisfying 
	\eqref{inv-bous} and~\eqref{null_exact_condition}.
\end{thm}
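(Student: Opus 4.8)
The plan is to \emph{decouple} the velocity and the temperature by a fixed-point argument: given a small temperature field $\hat\theta$, first solve a \emph{forced} boundary controllability problem for the Euler part (driving $\yvec_0$ to $\yvec_1$ with source $\mathbf{k}\,\hat\theta$); then, with the resulting velocity field frozen, solve a \emph{boundary null controllability} problem for the linear transport--diffusion equation satisfied by the temperature (driving $\theta_0$ to $0$ with a control supported on $\gamma$); a fixed point of the composition then yields the desired solution of~\eqref{inv-bous}. I would fix $T>0$ and $\yvec_0,\yvec_1\in\Cvec_0^{2,\alpha}(\Ombar;\mathbb{R}^N)$, and work in a closed ball $\mathcal{B}$ of $C^0([0,T];C^{1,\alpha}(\Ombar))$ made of functions vanishing on $(\Gamma\backslash\gamma)\times(0,T)$, of radius proportional to $\|\theta_0\|_{2,\alpha}$.

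\emph{Step~A (forced Euler control).} For $\hat\theta\in\mathcal{B}$, I would look for $(\yvec,p)$ with
$$\yvec_t+(\yvec\cdot\nabla)\yvec=-\nabla p+\mathbf{k}\,\hat\theta,\quad \nabla\cdot\yvec=0 \ \hbox{ in }\ \Om\times(0,T),\quad \yvec\cdot\nvec=0 \ \hbox{ on }\ (\Gamma\backslash\Gamma_0)\times(0,T),$$
together with $\yvec(\cdot,0)=\yvec_0$ and $\yvec(\cdot,T)=\yvec_1$. Since $\mathbf{k}\,\hat\theta$ is a \emph{given, small} source term, this can be obtained by reproducing the extension/return/fixed-point scheme behind Theorem~\ref{T-INV-BOUS} and Proposition~\ref{P-INV-BOUS}, now carrying the extra source along the flow; it provides, for each sufficiently small $\hat\theta$, a controlled solution depending continuously on $\hat\theta$, with $\|\yvec\|_{0,1,\alpha}\le C$ for a constant $C$ depending on $T$, $\|\yvec_0\|_{2,\alpha}$ and $\|\yvec_1\|_{2,\alpha}$. \emph{Step~B (boundary null control of the temperature).} Given such a $\yvec$, I would solve
$$\tilde\theta_t+\yvec\cdot\nabla\tilde\theta-\kappa\,\Delta\tilde\theta=0 \ \hbox{ in }\ \Om\times(0,T),\qquad \tilde\theta=v\,1_{\gamma} \ \hbox{ on }\ \Gamma\times(0,T),\qquad \tilde\theta(\cdot,0)=\theta_0,\quad \tilde\theta(\cdot,T)=0,$$
with the estimate $\|\tilde\theta\|_{0,1,\alpha}\le C\,\|\theta_0\|_{2,\alpha}$, where $C$ is \emph{uniform} over the bounded family of velocities furnished by Step~A. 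The composition $\mathcal{N}:\hat\theta\mapsto\tilde\theta$ then maps $\mathcal{B}$ into itself as soon as $\eta=\|\theta_0\|_{2,\alpha}$ is small enough — the admissible $\eta$ depending on $\|\yvec_0\|_{2,\alpha}$ (and on $T$, $\Om$, $\Gamma_0$, $\gamma$, $\kappa$, $\|\yvec_1\|_{2,\alpha}$) through the constants above — and, thanks to the parabolic smoothing in Step~B together with compact Hölder embeddings (for instance $C^{2,\alpha}(\Ombar)\hookrightarrow\hookrightarrow C^{1,\alpha}(\Ombar)$, and the analogue in time), $\mathcal{N}$ is continuous and compact on $\mathcal{B}$. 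Schauder's theorem gives $\theta^{*}=\mathcal{N}(\theta^{*})$; setting $\yvec^{*}$ equal to the corresponding velocity and $p^{*}$ to the associated pressure, the triple $(\yvec^{*},p^{*},\theta^{*})$ solves~\eqref{inv-bous}, vanishes on $(\Gamma\backslash\gamma)\times(0,T)$, and satisfies~\eqref{null_exact_condition}.

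I expect the main obstacle to be Step~B: boundary null controllability of a transport--diffusion equation from the (possibly small) piece $\gamma$, with enough regularity to re-enter the Hölder framework and with a control cost uniform in $\yvec$. I would handle the geometry by the usual extension device — embedding $\Om$ into a smooth bounded domain $\widetilde\Om$ with $\Gamma\backslash\gamma\subset\partial\widetilde\Om$ and $\gamma$ interior to $\widetilde\Om$, extending $\yvec$ suitably (keeping the equation uniformly parabolic), and replacing the boundary control on $\gamma$ by a distributed control supported in a small open set $\omega\subset\widetilde\Om\backslash\Ombar$ — and then invoke a global Carleman estimate for the adjoint operator $-\varphi_t-\yvec\cdot\nabla\varphi-\kappa\,\Delta\varphi$. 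Since the lower-order terms of that operator are controlled by $\|\yvec\|_{(0)}$ (using $\nabla\cdot\yvec=0$), the resulting observability inequality holds with a constant depending only on $\Om$, $\omega$, $T$, $\kappa$ and $\|\yvec\|_{(0)}$, hence uniform over the bounded family of velocities coming from Step~A; this uniformity is exactly what makes the fixed point close. Finally, building the control as the minimizer of a Carleman-weighted penalized functional and bootstrapping via parabolic regularity — exploiting that the data are $C^{2,\alpha}$ and compatible — one upgrades the natural $L^2$-controlled trajectory to the class $C^{0}([0,T];C^{1,\alpha}(\Ombar))$ with the stated bound. The mismatch between the $L^2$/$L^p$-based parabolic theory used here and the Hölder spaces natural for the Euler part is precisely the reason why only $C^{1,\alpha}$-in-space regularity of the controlled temperature (and velocity) is claimed.
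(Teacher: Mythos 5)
Your overall architecture — a Schauder fixed point in the temperature variable, with the parabolic step handled by extending $\Om$ across $\gamma$ to a larger domain, replacing the boundary control by a distributed control in $\om\subset\widetilde\Om\setminus\Ombar$, and invoking a Carleman/observability estimate whose constant depends on $\|\yvec\|_\infty$ — is exactly the paper's device (Theorem~\ref{NC-Parab} and Proposition~\ref{P4}). But your Step~A contains a genuine gap. You claim that the \emph{forced} exact boundary controllability of the Euler system (from $\yvec_0$ to $\yvec_1$ at time $T$, with a given small source $\mathbf{k}\,\hat\theta$) follows ``by reproducing the extension/return/fixed-point scheme, carrying the extra source along the flow.'' It does not. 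In that scheme the vorticity is driven to zero by pure transport during the second half of the construction: every fluid particle occupying $\Om$ at the final time entered through the inflow boundary carrying zero vorticity. Along such a trajectory the vorticity at the final time equals the \emph{accumulated source} $\int \mathbf{k}\times\nabla\hat\theta$ over the portion of the trajectory spent inside the domain. In Proposition~\ref{P1} this integral vanishes because $\theta$ is itself transported and flushed out by $t=1/2$, so the source is identically zero on the flushing interval. Your $\hat\theta$ is the output of a \emph{diffusive} null-control step: it vanishes only at $t=T$, and $\nabla\hat\theta$ is active arbitrarily close to $t=T$, so the flushed vorticity is recreated and $\yvec(\cdot,T)=\yvec_1$ fails. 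Repairing this would require injecting carefully chosen \emph{nonzero} vorticity on the inflow boundary to pre-cancel the accumulated source — an additional construction you do not supply. (Secondary issues: the map $\hat\theta\mapsto\yvec$ is a \emph{selection} among non-unique controlled trajectories whose continuity must be argued; your $\eta$ would depend on $\|\yvec_1\|_{2,\alpha}$, whereas the theorem asserts dependence on $\|\yvec_0\|_{2,\alpha}$ only; and during your parabolic step the transporting field has $\yvec\cdot\nvec\neq0$ on $\Gamma_0$, which complicates the extension.)

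The paper avoids all of this by splitting the controls \emph{in time} rather than composing them on the whole interval. On $(0,T^*)$ with $T^*<T$, no velocity control is exerted at all: one imposes $\yvec\cdot\nvec=0$ on the whole of $\Gamma$, solves the Euler initial-value problem with source $\mathbf{k}\,\overline\theta$ for a guessed $\overline\theta$ in the unit ball of $C^0([0,T^*];C^{1,\alpha}(\Ombar))$, then null-controls the advection--diffusion equation along the resulting (uncontrolled, hence boundedly and continuously $\overline\theta$-dependent) velocity via the extended-domain/Carleman argument, and closes the loop with Schauder — this is Proposition~\ref{P4}, and it is here that $\eta$ small depending only on $\|\yvec_0\|_{2,\alpha}$ is needed. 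At $t=T^*$ one has $\theta\equiv0$; taking the temperature control to be zero afterwards, $\theta\equiv0$ on $(T^*,T)$, the system reduces to the pure Euler equation, and the already-proved exact controllability (Theorem~\ref{T-INV-BOUS} with $\theta_0=\theta_1=0$, i.e.\ Coron--Glass) drives $\yvec(\cdot,T^*)$ to $\yvec_1$ on $(T^*,T)$ with the buoyancy source genuinely absent. You should restructure your argument along these lines: control the temperature first, then the velocity, rather than both at once.
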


	The proof relies on the following strategy.
	
	First, we linearize and control only the temperature $\theta$;
	this leads the system to a state of the form $(\tilde\yvec_0,0)$ at (say) time $T/2$.
	Then, in a second step, we control the velocity field using in part Theorem~\ref{T-INV-BOUS}.
	It will be seen that, in order to get good estimates and prove the existence of a fixed point, the initial temperature $\theta_0$ must be small.

	To our knowledge, it is unknown whether a global exact-null controllability result holds for~\eqref{inv-bous} when $\kappa > 0$.
	Unfortunately, the cost of controlling $\theta$ grows exponentially with the $L^\infty$-norm of the transporting velocity field $\yvec$ and 
	this is a crucial difficulty to establish estimates independent of the size of the initial data.

\

	The rest of this paper is organized as follows.
	
	In~Section~\ref{Sec2}, we recall the results needed to prove Theorems~\ref{P-INV-BOUS} and~\ref{T-HEAT-INV-BOUS}.
	In~Section~\ref{Sec3}, we give the proof of Theorem~\ref{T-INV-BOUS}.
	In~Section~\ref{Sec4}, we prove Proposition~\ref{P-INV-BOUS} in the 2D case.
	As mentioned above, the main ingredients of the proof are the construction of a nontrivial trajectory that starts and ends at $(\0vec,0)$ 
	and a Fixed-Point Theorem (the key ideas of the return method).
	In~Section~\ref{Sec5}, we give the proof of Theorem~\ref{P-INV-BOUS} in the 3D case.
	Finally, Section~\ref{Sec6} contains the proof of Theorem~\ref{T-HEAT-INV-BOUS}.

%%%%%%%%%%%%%%%%%%%%%%%%%%%%%%%%%%%%%%%%%%
%%%% SECTION 2
%%%%%%%%%%%%%%%%%%%%%%%%%%%%%%%%%%%%%%%%%%

\section{Preliminary results}\label{Sec2}

	In this section, we will recall some results used in the proofs of Theorems~\ref{T-INV-BOUS} and~\ref{T-HEAT-INV-BOUS}.
	Also, we will indicate how to construct a trajectory appropriate to apply the return method.

	The following result is an immediate consequence of {\it Banach's Fixed-Point Theorem:}
	
\begin{thm}\label{fixedpoint}
	Let $(B_1,\|\cdot\|_1)$ and $(B_2,\|\cdot\|_2)$ be Banach spaces with $B_2$ continuously embedded in $B_1$.
	Let $B$ be a subset of $B_2$ and let $G: B \mapsto B$ be a  uniformly continuous mapping such that, for some $n \geq 1$ and  some $\alpha \in [0,1)$, one has
$$
	\|G^n(u)-G^n(v)\|_1\leq \alpha \|u-v\|_1 \ \ \forall u,v\in B.
$$
	Let us denote by $\overline B$ the closure of $B$ for the norm $\|\cdot\|_{1}$.
	Then, $G$ can be uniquely extended to a continuous mapping $\widetilde G: \overline B \mapsto \overline B$ that 
	possesses a unique fixed-point in $\overline B$.
\end{thm}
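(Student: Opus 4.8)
The plan is to reduce the statement to the classical Banach Fixed-Point Theorem applied to the contraction $G^n$ on the completion of $B$ with respect to $\|\cdot\|_1$. First I would observe that, since $G: B \mapsto B$ is uniformly continuous for the norm $\|\cdot\|_1$ (uniform continuity being inherited from the hypothesis, or at worst assumed with respect to $\|\cdot\|_1$, which is the weaker norm), it admits a unique uniformly continuous extension $\widetilde G$ to the closure $\overline B$ of $B$ in $(B_1,\|\cdot\|_1)$; this is the standard extension-by-density principle, valid because $\overline B$ is a closed subset of the Banach space $B_1$ and is therefore itself a complete metric space for $\|\cdot\|_1$. One must check that $\widetilde G$ maps $\overline B$ into $\overline B$: if $u \in \overline B$, take a sequence $u_j \in B$ with $u_j \to u$ in $\|\cdot\|_1$; then $G(u_j) \in B$ and $G(u_j) \to \widetilde G(u)$ by continuity, so $\widetilde G(u) \in \overline B$.

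Next I would pass the contraction estimate to the extension. The inequality $\|G^n(u)-G^n(v)\|_1 \le \alpha\|u-v\|_1$ holds for all $u,v \in B$; since $\widetilde{G}^{\,n} = \widetilde{G^n}$ (iterates of the extension are the extension of the iterates, by uniqueness of continuous extensions) and both sides of the inequality are continuous functions of $(u,v) \in \overline B \times \overline B$ for the $\|\cdot\|_1$-topology, the estimate persists on $\overline B$:
\[
\|\widetilde G^{\,n}(u)-\widetilde G^{\,n}(v)\|_1 \le \alpha\|u-v\|_1 \qquad \forall\, u,v \in \overline B.
\]
Thus $\widetilde G^{\,n}$ is an $\alpha$-contraction of the complete metric space $(\overline B, \|\cdot\|_1)$ into itself.

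By Banach's Fixed-Point Theorem, $\widetilde G^{\,n}$ has a unique fixed point $u^* \in \overline B$. It remains to upgrade this to a fixed point of $\widetilde G$ itself: applying $\widetilde G$ to $\widetilde G^{\,n}(u^*) = u^*$ gives $\widetilde G^{\,n}(\widetilde G(u^*)) = \widetilde G(u^*)$, so $\widetilde G(u^*)$ is also a fixed point of $\widetilde G^{\,n}$; by uniqueness $\widetilde G(u^*) = u^*$. Conversely any fixed point of $\widetilde G$ is a fixed point of $\widetilde G^{\,n}$, hence equals $u^*$, so the fixed point of $\widetilde G$ in $\overline B$ is unique. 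The only genuinely delicate point is the bookkeeping around the extension — verifying that iteration commutes with extension and that $\widetilde G$ preserves $\overline B$ — but this is routine once the density argument is set up; no estimate beyond the given contraction inequality is needed. \Fin
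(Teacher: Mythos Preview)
Your proposal is correct and matches the paper's approach: the paper itself does not give a proof, stating only that Theorem~\ref{fixedpoint} ``is an immediate consequence of \textit{Banach's Fixed-Point Theorem}.'' Your argument --- extend $G$ by uniform continuity and density to $\overline B$, pass the contraction inequality to $\widetilde G^{\,n}$ on the complete metric space $(\overline B,\|\cdot\|_1)$, apply Banach's theorem, and then upgrade the fixed point of $\widetilde G^{\,n}$ to one of $\widetilde G$ via the standard commutation trick --- is exactly the routine verification the paper leaves to the reader.
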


	Recall that, if $E$ is a Banach space with norm $\|\cdot\|_E$ and~$f: [0,T] \mapsto E$ is continuously differentiable, then $t \mapsto \| f(t) \|_E$ is {\it right-differentiable,} with
	\[
{d \over dt^+} \| f(t) \|_E \leq \| f'(t) \|_E
	\]
for all $t$.
	Later, the following lemma will be very important to deduce appropriate estimates;
	this is Lemma~1, p.~6, in~\cite{Bardos}.
	
	%The proof can be found in \cite{Bardos}.

\begin{lemma}\label{lemma Bardos1}
	Let $m$ be a nonnegative integer.
	Assume that $u\in C^0([0,T];C^{m+1,\alpha}(\overline{\Omega}))$, $g\in C^0([0,T];C^{m,\alpha}(\overline{\Omega}))$ and $\vvec\in C^0([0,T];\Cvec^{m,\alpha}(\overline{\Omega};\mathbb{R}^N))$ are given, with $\vvec\cdot \nvec=0$ on~$\Gamma \times (0,T)$ and
	\[
\frac{\partial u}{\partial t}+\vvec\cdot\nabla u= g \ \hbox{ in } \ \Omega\times(0,T).
	\]
	Then, $u_t \in C^0([0,T];C^{m,\alpha}(\overline{\Omega}))$.
	If $m\geq1$, one has
\[
	\frac{d}{d t^+} \, \|u(\cdot\,,t)\|_{m,\alpha}\leq \|g(\cdot\,,t)\|_{m,\alpha}+K\|\vvec(\cdot\,,t)\|_{m,\alpha}\|u(\cdot\,,t)\|_{m,\alpha}
	\quad \hbox{in} \quad  (0,T),
\]
	where $K$ is a constant only depending on $\alpha$ and $m$.
	On the other hand, if $m=0$, the following holds:
\[
    \frac{d}{d t^+} \, \|u(\cdot\,,t)\|_{0,\alpha}\leq \|g(\cdot\,,t)\|_{0,\alpha}+\alpha\| \nabla\vvec(\cdot\,,t) \|_{0,\alpha} \| u(\cdot\,,t) \|_{0,\alpha}\quad \hbox{in} \quad  (0,T).
\]
\end{lemma}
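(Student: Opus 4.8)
The plan is threefold: first read off the time-regularity of $u_t$ from the equation itself, then prove the differential inequality in the case $m=0$ by integrating along the characteristics of $\vvec$, and finally reduce the case $m\geq1$ to the case $m=0$ by differentiating the equation and estimating the resulting commutator. For the regularity statement: since $u\in C^0([0,T];C^{m+1,\alpha}(\Ombar))$ we have $\nabla u\in C^0([0,T];\Cvec^{m,\alpha}(\Ombar;\mathbb{R}^N))$, and since $C^{m,\alpha}(\Ombar)$ is a Banach algebra the product $\vvec\cdot\nabla u$ lies in $C^0([0,T];C^{m,\alpha}(\Ombar))$; hence $u_t=g-\vvec\cdot\nabla u\in C^0([0,T];C^{m,\alpha}(\Ombar))$, and an elementary integration-in-time argument (comparing $u$ with $u(\cdot,0)+\int_0^t(g-\vvec\cdot\nabla u)\,ds$, which has the same distributional $t$-derivative and the same value at $t=0$) shows that $t\mapsto u(\cdot,t)$ is in fact a $C^1$ curve in $C^{m,\alpha}(\Ombar)$, so the right-derivative property recalled just before the lemma applies.

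For $m=0$ I would argue with the flow. After a routine spatial regularisation of $\vvec$ (so that $\vvec$ may be assumed Lipschitz in $x$, the final estimate involving only $\|\nabla\vvec\|$), let $\Phi(\cdot\,;t,x)$ solve $\partial_s\Phi=\vvec(\Phi,s)$, $\Phi(t;t,x)=x$. Since $\vvec\cdot\nvec=0$ on $\Gamma$, each map $x\mapsto\Phi(s;t,x)$ is a homeomorphism of $\Ombar$ onto itself, and Gronwall's lemma yields the distortion bound $|\Phi(s;t,x)-\Phi(s;t,x')|\leq|x-x'|\exp(|\int_t^s\|\nabla\vvec(\cdot,\sigma)\|_0\,d\sigma|)$. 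Integrating $u$ along characteristics gives, for $h>0$ and $y\in\Ombar$, $u(y,t+h)=u(\Phi(t;t+h,y),t)+\int_t^{t+h}g(\Phi(\sigma;t+h,y),\sigma)\,d\sigma$. Taking the supremum over $y$ (invariant under the flow) gives $\frac{d}{dt^+}\|u(\cdot,t)\|_0\leq\|g(\cdot,t)\|_0$; taking instead the $\alpha$-H\"older quotient and using the distortion bound raised to the power $\alpha$, which contributes a factor $1+\alpha h\|\nabla\vvec(\cdot,t)\|_0+o(h)$, gives $\frac{d}{dt^+}[u(\cdot,t)]_\alpha\leq[g(\cdot,t)]_\alpha+\alpha\|\nabla\vvec(\cdot,t)\|_0\,[u(\cdot,t)]_\alpha$; adding the two and bounding $\|\nabla\vvec\|_0\leq\|\nabla\vvec\|_{0,\alpha}$ gives the claimed inequality for $m=0$.

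For $m\geq1$ I would differentiate the equation. For each multi-index $\beta$ with $|\beta|\leq m$, the function $w:=\partial^\beta u$ solves $w_t+\vvec\cdot\nabla w=\partial^\beta g-[\partial^\beta,\vvec\cdot\nabla]u$, where $[\partial^\beta,\vvec\cdot\nabla]u=\sum_{0\neq\gamma\leq\beta}\binom{\beta}{\gamma}(\partial^\gamma\vvec)\cdot\nabla\partial^{\beta-\gamma}u$ is a sum of products of a derivative of $\vvec$ of order $\leq m$ with a derivative of $u$ of order $\leq m$, hence is bounded in $C^{0,\alpha}(\Ombar)$ by $C\|\vvec\|_{m,\alpha}\|u\|_{m,\alpha}$ by the H\"older product estimate $\|fg\|_{0,\alpha}\leq C(\|f\|_0\|g\|_{0,\alpha}+\|f\|_{0,\alpha}\|g\|_0)$ (together with the embeddings $C^{j,\alpha}\hookrightarrow C^{0,\alpha}$). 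Since then the right-hand side is in $C^0([0,T];C^{0,\alpha}(\Ombar))$ and $w\in C^0([0,T];C^{1,\alpha}(\Ombar))$, the $m=0$ inequality applies to $w$; taking the maximum over $|\beta|\leq m$ — using that the right-derivative of a finite maximum is at most the maximum of the right-derivatives, and that $\max_{|\beta|\leq m}\|\partial^\beta\cdot\|_{0,\alpha}$ is an equivalent norm on $C^{m,\alpha}(\Ombar)$ — yields $\frac{d}{dt^+}\|u(\cdot,t)\|_{m,\alpha}\leq\|g(\cdot,t)\|_{m,\alpha}+K\|\vvec(\cdot,t)\|_{m,\alpha}\|u(\cdot,t)\|_{m,\alpha}$ with $K=K(\alpha,m)$.

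The crux, and the whole point of the lemma, is the gain of one derivative: the naive bound $\frac{d}{dt^+}\|u\|_{m,\alpha}\leq\|u_t\|_{m,\alpha}\leq\|g\|_{m,\alpha}+C\|\vvec\|_{m,\alpha}\|u\|_{m+1,\alpha}$ loses a derivative on $u$ and is useless for the Gronwall-type arguments used later, and it is precisely the characteristics computation of the $m=0$ step that converts $\|u\|_{m+1,\alpha}$ into $\|u\|_{m,\alpha}$. The delicate technical points are therefore the bookkeeping of the $\alpha$-H\"older seminorm under composition with the near-identity flow of $\vvec$, and — for the minimal-regularity $m=0$ statement — justifying the flow through regularisation while keeping the constant $\alpha$ sharp.
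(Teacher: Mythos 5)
Your proposal is correct and reproduces, in essence, the proof of the cited reference: the paper itself gives no argument for this lemma but simply quotes it (Lemma~1, p.~6, of~\cite{Bardos}), and the proof there is exactly your combination of transport along the flow of $\vvec$ (which preserves $\Ombar$ because $\vvec\cdot\nvec=0$), the H\"older distortion factor $\exp\bigl(\alpha\int\|\nabla\vvec\|_0\bigr)=1+\alpha h\|\nabla\vvec(\cdot,t)\|_0+o(h)$ yielding the $m=0$ inequality, and the commutator reduction $\partial^\beta u$ for $m\geq 1$. The only caveat --- inherited from the statement as printed, not introduced by you --- is that for $m=0$ the hypothesis places $\vvec$ only in $C^0([0,T];\Cvec^{0,\alpha})$ while the conclusion involves $\|\nabla\vvec\|_{0,\alpha}$, so that case must be read (as you implicitly do via regularisation) with $\vvec$ at least $C^{1,\alpha}$ in space, which is how the lemma is actually invoked throughout the paper.
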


	From Lemma~\ref{lemma Bardos1} and a standard regularization argument, we easily deduce the following:
	
\begin{lemma}\label{lemma Bardos}
	Let $m$ be a nonnegative integer.
	Assume that $u\in C^0([0,T];C^{m,\alpha}(\overline{\Omega}))$, $g\in C^0([0,T];C^{m,\alpha}(\overline{\Omega}))$ and $\vvec\in C^0([0,T];\Cvec^{m,\alpha}(\overline{\Omega};\mathbb{R}^N))$ are given, with $\vvec\cdot \nvec=0$ on~$\Gamma \times (0,T)$ and
	\[
\frac{\partial u}{\partial t}+\vvec\cdot\nabla u= g \ \hbox{ in } \ \Omega\times(0,T).
	\]
	Then
\[
	\dis\|u\|_{0,m,\alpha}\leq\left(\int_0^T \|g(\cdot\,,t)\|_{m,\alpha}\,dt+\|u(\cdot\,,0)\|_{m,\alpha}\right)\exp\left(\dis K\int_0^T\|\vvec(\cdot\,,t)\|_{m,\alpha}\,dt\right),
\]
	where $K$ is a constant only depending on $\alpha$ and $m$.
\end{lemma}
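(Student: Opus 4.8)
The plan is to reduce the statement to Lemma~\ref{lemma Bardos1} by a regularization argument in order to remove the extra derivative that Lemma~\ref{lemma Bardos1} requires on~$u$ (there $u$ is assumed in $C^0([0,T];C^{m+1,\alpha})$, whereas here we only have $u\in C^0([0,T];C^{m,\alpha})$). First I would integrate the right-differential inequality of Lemma~\ref{lemma Bardos1}: once we know that $t\mapsto\|u(\cdot\,,t)\|_{m,\alpha}$ is right-differentiable with
\[
\frac{d}{dt^+}\|u(\cdot\,,t)\|_{m,\alpha}\leq \|g(\cdot\,,t)\|_{m,\alpha}+K\|\vvec(\cdot\,,t)\|_{m,\alpha}\|u(\cdot\,,t)\|_{m,\alpha},
\]
a Gronwall-type argument for right-derivatives (valid since the map is continuous and right-differentiable) yields, for every $t\in[0,T]$,
\[
\|u(\cdot\,,t)\|_{m,\alpha}\leq\left(\int_0^T\|g(\cdot\,,s)\|_{m,\alpha}\,ds+\|u(\cdot\,,0)\|_{m,\alpha}\right)\exp\left(K\int_0^T\|\vvec(\cdot\,,s)\|_{m,\alpha}\,ds\right),
\]
and taking the maximum over $t\in[0,T]$ gives exactly the asserted bound on $\|u\|_{0,m,\alpha}$. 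The case $m=0$ is handled the same way using the second inequality in Lemma~\ref{lemma Bardos1}, noting that $\|\nabla\vvec\|_{0,\alpha}\leq\|\vvec\|_{1,\alpha}$ so the stated form with $\|\vvec\|_{m,\alpha}=\|\vvec\|_{0,\alpha}$ still follows (here one uses $\alpha\le K$ or simply absorbs constants), and similarly $\alpha\le 1\le K$.

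The remaining point — and the one requiring care — is that Lemma~\ref{lemma Bardos1} cannot be applied directly, because its hypothesis demands one more space derivative on $u$ than we are given. To fix this I would regularize: let $(\rho_\eps)$ be a standard mollifying sequence acting in the space variables (after extending $\Om$ to $\mathbb{R}^N$ and the data by a bounded extension operator, or working with a tubular-neighbourhood extension that preserves $\vvec\cdot\nvec=0$), and set $u_\eps:=\rho_\eps * u$, $\vvec_\eps:=\rho_\eps*\vvec$, $g_\eps:=\rho_\eps*g$. Then $u_\eps$ is smooth enough in space, and it satisfies $\partial_t u_\eps+\vvec_\eps\cdot\nabla u_\eps=g_\eps+r_\eps$ where the commutator remainder $r_\eps:=\vvec_\eps\cdot\nabla u_\eps-\rho_\eps*(\vvec\cdot\nabla u)$ satisfies $\|r_\eps(\cdot\,,t)\|_{m,\alpha}\to0$ uniformly in $t$, by the classical Friedrichs/DiPerna–Lions commutator lemma in Hölder spaces. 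Applying Lemma~\ref{lemma Bardos1} to $u_\eps$ with right-hand side $g_\eps+r_\eps$ and velocity $\vvec_\eps$, integrating as above, and then letting $\eps\to0$ (using $\|u_\eps-u\|_{0,m,\alpha}\to0$, $\|g_\eps-g\|_{0,m,\alpha}\to0$, $\|\vvec_\eps-\vvec\|_{0,m,\alpha}\to0$, $\|r_\eps\|_{0,m,\alpha}\to0$) gives the inequality for $u$ itself.

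The main obstacle is therefore the commutator estimate $\|r_\eps\|_{m,\alpha}\to 0$: one must check that mollification commutes with the transport operator up to an error that is small in the $C^{m,\alpha}$ norm, uniformly in $t$, which is where the regularity $\vvec\in C^0([0,T];\Cvec^{m,\alpha})$ and the product/composition estimates in Hölder spaces are used. A minor technical nuisance is that $\vvec_\eps\cdot\nvec$ need not vanish on $\Gamma$ after mollification; this is dealt with either by using an extension across $\Gamma$ that keeps the normal trace zero, or — more simply — by observing that Lemma~\ref{lemma Bardos1}'s proof only uses $\vvec\cdot\nvec=0$ to control boundary terms and that these contribute an error that also vanishes as $\eps\to0$. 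Once these points are in place, the rest is the elementary Gronwall integration described above.
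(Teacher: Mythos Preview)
Your approach---regularize, apply Lemma~\ref{lemma Bardos1}, integrate via Gronwall, and pass to the limit---is exactly what the paper indicates (it gives no details beyond ``a standard regularization argument'').

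Two technical points in your write-up deserve attention, though. First, the claimed convergences $\|u_\eps-u\|_{0,m,\alpha}\to 0$, $\|g_\eps-g\|_{0,m,\alpha}\to 0$, $\|r_\eps\|_{0,m,\alpha}\to 0$ are not true in general: mollification does not approximate in the $C^{m,\alpha}$ norm itself (only in $C^{m,\alpha'}$ for $\alpha'<\alpha$). The fix is standard but should be stated: mollified norms are bounded by the original ones ($\|g_\eps\|_{m,\alpha}\le\|g\|_{m,\alpha}$, etc.), so the right-hand side of the inequality for $u_\eps$ is bounded independently of $\eps$; then pass to the limit on the left using that the $C^{m,\alpha}$ norm is lower semicontinuous under the convergence $u_\eps\to u$ in $C^{m,\alpha'}$. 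Second, your treatment of the case $m=0$ is garbled: the inequality $\|\nabla\vvec\|_{0,\alpha}\le\|\vvec\|_{1,\alpha}$ goes the wrong way to produce a bound in terms of $\|\vvec\|_{0,\alpha}$, so the stated conclusion for $m=0$ cannot be obtained from the $m=0$ case of Lemma~\ref{lemma Bardos1} as written. (This is arguably a defect of the lemma's statement rather than your argument; in the paper the lemma is only invoked with $m\ge 1$.)
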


	We will also use he following technical result (see~Lemma $3.1$, p. 8, in~\cite{Glass3}):
	
\begin{lemma}\label{lemma div-om}
	Let us assume that
\[
	\begin{array}{cll} \dis
	\wvec_0\in \Cvec^{1,\alpha}(\bar\Om;\mathbb{R}^N), \ \ \nabla\cdot \wvec_0 = 0 &  \hbox{in} & \Om, \\
	\uvec\in C^0([0,T];\Cvec^{1,\alpha}(\bar\Om;\mathbb{R}^N)),\ \  \uvec\cdot \nvec=0&  \hbox{on} & \Gamma \times (0,T), \\
	\gvec\in C^0([0,T];\Cvec^{0,\alpha}(\bar\Om,\mathbb{R}^N)),\ \  \nabla\cdot \gvec = 0&  \hbox{in} & \Om\times(0,T).
	\end{array}
\]
	Let~$\wvec$ be a function in~$C^0([0,T];\Cvec^{1,\alpha}(\overline\Om;\mathbb{R}^N))$ satisfying
\[
	\left\{
	\begin{array}{lll}
	\wvec_t+(\uvec\cdot\nabla)\wvec=(\wvec\cdot\nabla)\uvec-(\nabla\cdot \uvec)\wvec + \gvec   & \hbox{ in} & \Omega\times (0,T), \\
	\noalign{\smallskip} \dis \wvec(\cdot\,,0) = \wvec_0                         & \hbox{ in } & \Om.                \\
\end{array}
\right.
\]
	Then, $\nabla\cdot \wvec\equiv0$.
	Moreover, there exists $\vvec\in C^0([0,T];\Cvec^{2,\alpha}(\overline\Om;\mathbb{R}^N))$ such that
\[
	\wvec=\nabla\times \vvec~\hbox{ in }~\Om\times(0,T).
\]
\end{lemma}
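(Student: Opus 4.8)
The plan is to establish the two assertions in turn: first that $\phi:=\nabla\cdot\wvec$ solves a homogeneous linear transport equation with vanishing initial datum, and is therefore identically zero; then that a divergence‑free field on the simply connected domain $\Om$ admits a vector potential of the required regularity.

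\emph{Step 1: the equation satisfied by $\nabla\cdot\wvec$.} First I would write the evolution equation for $\wvec$ componentwise and take its divergence. Since $\wvec,\uvec\in C^0([0,T];\Cvec^{1,\alpha}(\overline\Om;\mathbb{R}^N))$ and $\gvec\in C^0([0,T];\Cvec^{0,\alpha}(\overline\Om;\mathbb{R}^N))$, this manipulation is legitimate in the sense of distributions. Applying the Leibniz rule and the elementary identity $\sum_{i,j}(\partial_i u_j)(\partial_j w_i)=\sum_{i,j}(\partial_i w_j)(\partial_j u_i)$ (obtained by renaming the summation indices), one sees that the two quadratic terms cancel, the two copies of $(\wvec\cdot\nabla)(\nabla\cdot\uvec)$ produced by $\nabla\cdot\big[(\wvec\cdot\nabla)\uvec\big]$ and $\nabla\cdot\big[(\nabla\cdot\uvec)\wvec\big]$ cancel, and the source term contributes $\nabla\cdot\gvec=0$. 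What should remain is
\[
\phi_t+(\uvec\cdot\nabla)\phi=-(\nabla\cdot\uvec)\,\phi \ \hbox{ in } \ \Om\times(0,T), \qquad \phi(\cdot\,,0)=\nabla\cdot\wvec_0=0 .
\]
(Heuristically: $\phi$ is carried along the flow of $\uvec$ up to an integrating factor and starts from $0$, so it stays $0$.)

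\emph{Step 2: $\phi\equiv 0$.} Note that $\phi\in C^0([0,T];C^{0,\alpha}(\overline\Om))$ and that $g:=-(\nabla\cdot\uvec)\phi$ lies in $C^0([0,T];C^{0,\alpha}(\overline\Om))$ with $\|g(\cdot\,,t)\|_{0,\alpha}\le C\|\uvec(\cdot\,,t)\|_{1,\alpha}\|\phi(\cdot\,,t)\|_{0,\alpha}$. Since $\uvec\cdot\nvec=0$ on $\Gamma\times(0,T)$, I would apply Lemma~\ref{lemma Bardos} with $m=0$ on each subinterval $[0,s]\subset[0,T]$ and use $\|\phi(\cdot\,,0)\|_{0,\alpha}=0$ to get
\begin{align*}
\max_{[0,s]}\|\phi(\cdot\,,t)\|_{0,\alpha}
&\le\Big(\int_0^s\|g(\cdot\,,t)\|_{0,\alpha}\,dt\Big)\exp\Big(K\int_0^T\|\uvec(\cdot\,,t)\|_{0,\alpha}\,dt\Big)\\
&\le C'\int_0^s\|\phi(\cdot\,,t)\|_{0,\alpha}\,dt ,
\end{align*}
with $C'=C'(\alpha,T,\|\uvec\|_{0,1,\alpha})$. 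Since the left‑hand side dominates $\|\phi(\cdot\,,t)\|_{0,\alpha}$ for every $t\in[0,s]$, Gr\"onwall's lemma (integral form, with vanishing additive term) forces $\phi\equiv 0$, i.e.\ $\nabla\cdot\wvec\equiv 0$ in $\Om\times(0,T)$.

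\emph{Step 3: the vector potential.} For each fixed $t$, $\wvec(\cdot\,,t)\in\Cvec^{1,\alpha}(\overline\Om;\mathbb{R}^N)$ is divergence‑free on the simply connected domain $\Om$; by the classical solvability of the div--curl system together with Schauder estimates, there is $\vvec(\cdot\,,t)\in\Cvec^{2,\alpha}(\overline\Om;\mathbb{R}^N)$ with $\wvec(\cdot\,,t)=\nabla\times\vvec(\cdot\,,t)$ and $\|\vvec(\cdot\,,t)\|_{2,\alpha}\le C\|\wvec(\cdot\,,t)\|_{1,\alpha}$; since the construction can be taken linear in $\wvec$, the map $t\mapsto\vvec(\cdot\,,t)$ inherits the continuity of $t\mapsto\wvec(\cdot\,,t)$, whence $\vvec\in C^0([0,T];\Cvec^{2,\alpha}(\overline\Om;\mathbb{R}^N))$; this is the construction in~\cite{Glass3}. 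The main obstacle is really Step~1: arranging the terms produced by $\nabla\cdot$ so that they cancel in pairs or disappear through $\nabla\cdot\wvec_0=0$ and $\nabla\cdot\gvec=0$, while keeping the computation valid at the available regularity — $\wvec$ being only $C^{1,\alpha}$ in space, $\phi$ is merely $C^{0,\alpha}$, which is exactly why Step~2 must rely on the ``regularized'' transport estimate of Lemma~\ref{lemma Bardos} rather than on the pointwise differential inequality of Lemma~\ref{lemma Bardos1}. Everything else is routine.
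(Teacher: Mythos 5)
Your proof is correct. The paper does not actually prove this lemma --- it quotes it from~\cite{Glass3} (Lemma~3.1 there) --- and your argument (take the divergence, observe the pairwise cancellations of the quadratic terms and of the two copies of $(\wvec\cdot\nabla)(\nabla\cdot\uvec)$, conclude $\nabla\cdot\wvec\equiv 0$ from the resulting homogeneous transport equation via the integrated estimate of Lemma~\ref{lemma Bardos} and Gr\"onwall, then invoke div--curl solvability with Schauder estimates for the vector potential) is precisely the standard proof of the cited result.
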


	To end this section, we will recall a well known result dealing with the null controllability of general parabolic linear systems of the form
\begin{equation}\label{parab}
	\left\{
		\begin{array}{lll}
		     u_t - \kappa\Delta u + \wvec\cdot\nabla u = v1_{\om}        		& \text{in}  	&  D\times(0,T),  \\
		     u = 0               		    								& \text{on} 	&  \partial D\times(0,T),                   \\
		     u(\xvec,0) = u_0(\xvec)                           	  				& \text{in}  	&  D,
		\end{array}
	\right.
\end{equation}
	where $D \subset \mathbb{R}^N$ is a nonempty bounsded open set, $\kappa > 0$, $\wvec\in \mathbb{L}^\infty(D\times(0,T))$, $\om \subset D$ is a nonempty open set and 
	$1_\om$ is the characteristic function of~$\om$.

	It is well known that, for each $u_0 \in L^2(D)$ and each $v \in L^2(\om \times (0,T))$, there exists exactly one solution $u$ to \eqref{parab}, with
\[
	u\in C^0([0, T];L^2(D))\cap L^2(0, T;H^1_0(D)).
\]

	We also have:
\begin{thm}\label{NC-Parab}
	The linear system \eqref{parab} is null-controllable at any time $T>0$.
	In other words, for each $u_0\in L^2(D)$ there exists $v \in L^2(\om\times(0,T))$ such that the associated solution to~\eqref{parab} satisfies
\begin{equation}\label{8p}
	u(\xvec,T)=0 ~~ \text{in} ~~ D.
\end{equation}
	Furthermore, the extremal problem
\begin{equation}\label{optm}
	\left\{
		\begin{array}{l}
			\noalign{\smallskip} \text{Minimize } \ \displaystyle{\frac{1}{2}\iint_{\om\times(0,T)}|v|^2 \,dx\,dt}\\
			\noalign{\smallskip} \text{Subject to:  $v \in L^2(\om\times(0,T))$, $u$ satisfies \eqref{8p}}			
		\end{array}
	\right.
\end{equation}
	possesses exactly one solution $\hat v$ satisfying
\begin{equation}\label{est-cont}
	\|\hat v\|_2\leq C_0\|u_0\|_2,
\end{equation}
	where
\[
	C_0=\exp \left( C_1 \left( 1+ {1 \over T} + (1+T^2)\| \wvec \|^2_\infty \right) \right)
\]
	and $C_1$ only depends on $D$, $\om$ and $\kappa$.
\end{thm}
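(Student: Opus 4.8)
The plan is to prove Theorem~\ref{NC-Parab} by the classical duality argument (the Hilbert Uniqueness Method): null controllability together with the quantitative cost estimate~\eqref{est-cont} is equivalent to an \emph{observability inequality} for the adjoint system, and the latter is obtained from a global Carleman estimate.

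\textbf{Reduction to observability.} First I would introduce the adjoint problem
\begin{equation*}
\left\{
\begin{array}{lll}
-\varphi_t-\kappa\Delta\varphi-\nabla\cdot(\wvec\varphi)=0 & \text{in} & D\times(0,T),\\
\varphi=0 & \text{on} & \partial D\times(0,T),\\
\varphi(\xvec,T)=\varphi_T(\xvec) & \text{in} & D,
\end{array}
\right.
\end{equation*}
which, since $\wvec\in\mathbb{L}^\infty(D\times(0,T))$, is understood in the transposition sense and admits for every $\varphi_T\in L^2(D)$ a unique solution $\varphi\in C^0([0,T];L^2(D))\cap L^2(0,T;H^1_0(D))$. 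Multiplying \eqref{parab} by $\varphi$ and integrating by parts, one checks that $v\in L^2(\om\times(0,T))$ steers $u_0$ to rest at time $T$, i.e.\ \eqref{8p} holds, if and only if
$$
\iint_{\om\times(0,T)}v\,\varphi\,dx\,dt+\int_D u_0\,\varphi(\xvec,0)\,dx=0\qquad\text{for all }\varphi_T\in L^2(D).
$$
Consequently everything reduces to the observability estimate $\|\varphi(\cdot\,,0)\|_{L^2(D)}^2\leq C_0^2\iint_{\om\times(0,T)}|\varphi|^2\,dx\,dt$, valid for all solutions of the adjoint problem, with $C_0$ of the announced form.

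\textbf{Carleman estimate.} To obtain this inequality I would use a global Carleman estimate for the operator $-\partial_t-\kappa\Delta$, with the standard weights built from a function $\eta_0\in C^2(\overline D)$ having no critical point outside $\om$ and $\xi(\xvec,t)=\bigl(e^{\lambda\|\eta_0\|_\infty}-e^{\lambda\eta_0(\xvec)}\bigr)/(t(T-t))$. Applying it to $\varphi$, which solves $-\varphi_t-\kappa\Delta\varphi=\nabla\cdot(\wvec\varphi)$ in the weak sense, and keeping the first-order term in divergence form — so that only $\|\wvec\|_\infty$, and not $\nabla\cdot\wvec$, is needed — one absorbs $\nabla\cdot(\wvec\varphi)$ into the dominant left-hand side for $s,\lambda$ large; a standard energy estimate on $(T/4,3T/4)$ then upgrades the weighted bound into a bound for $\|\varphi(\cdot\,,0)\|_{L^2(D)}$. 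Optimizing the parameters $s$ and $\lambda$ as functions of $T$ and $\|\wvec\|_\infty$ yields exactly a constant of the form $C_0=\exp\bigl(C_1(1+1/T+(1+T^2)\|\wvec\|_\infty^2)\bigr)$ with $C_1=C_1(D,\om,\kappa)$. This is by now classical, and I would simply invoke the corresponding results (Fursikov--Imanuvilov, Imanuvilov--Puel, Fern\'andez-Cara--Guerrero).

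\textbf{Minimal-norm control and conclusion.} Finally, for $u_0\in L^2(D)$ I would minimize over $\varphi_T\in L^2(D)$ the functional
$$
J(\varphi_T):=\frac12\iint_{\om\times(0,T)}|\varphi|^2\,dx\,dt+\int_D u_0\,\varphi(\xvec,0)\,dx,
$$
which is continuous and strictly convex, and which the observability inequality renders coercive on the appropriate completion; hence $J$ has a unique minimizer $\hat\varphi_T$. Its Euler--Lagrange equation is precisely the characterization above with $v=\hat v:=\hat\varphi|_{\om\times(0,T)}$, so $\hat v$ is a null control; testing that equation against $\hat\varphi_T$ itself and using observability once more gives $\|\hat v\|_2\leq C_0\|u_0\|_2$, i.e.\ \eqref{est-cont}. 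Uniqueness in the extremal problem~\eqref{optm} is then immediate, since the set of admissible controls is a nonempty closed affine subspace of $L^2(\om\times(0,T))$ and therefore contains a unique element of least norm, namely $\hat v$. The one genuinely technical point — and the step I expect to be the main obstacle — is the Carleman estimate with the explicit, quantitative dependence of $C_0$ on $T$ and $\|\wvec\|_\infty$; the remainder is routine functional analysis.
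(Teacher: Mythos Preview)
Your proposal is correct and follows exactly the standard route---duality/HUM, global Carleman estimate with explicit constants, and the variational construction of the minimal-norm control---which is precisely the content of the reference the paper invokes (Fern\'andez-Cara--Guerrero, Theorem~2.2); the paper itself gives no proof and simply cites that result. In that sense you have supplied more detail than the paper does, and your outline matches the cited argument.
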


	For more details, see for instance Theorem~$2.2$, p.~1416, in~\cite{Cara_guerrero}.

%%%%%%%%%%%%%%%%%%%%%%%%%%%%%%%%%%%%%%%%%%
%%%% SUBSECTION 2.1
%%%%%%%%%%%%%%%%%%%%%%%%%%%%%%%%%%%%%%%%%%

\subsection{Construction of a trajectory when $N=2$}\label{sec-2-traj}

	We will argue as in \cite{Coron3}.
	Thus, let $\Omega_1 \subset \mathbb{R}^2$ be a bounded, Lipschitz-contractible open set whose boundary 
	consists of  two disjoint closed line segments $\Gamma^{-}$ and $\Gamma^{+}$ and two disjoint curves $\Sigma'$ and $\Sigma''$ 
	of class $C^{\infty}$ such that $\partial\Sigma'\cup\partial\Sigma''=\partial\Gamma^{-}\cup\partial\Gamma^{+}$.
	
	We assume that $\Om \subset \Om_1$.
	We also impose that there is a neighborhood $U^{-}$ of $\Gamma^{-}$ (resp. $U^+$ of $\Gamma^+$) such that $\Omega_1\cap U^{-}$
	(resp. $\Omega_1\cap U^+$) coincides with the  intersection of $U^{-}$ (resp. $U^+$), an open semi-plane limited by the line containing
	$\Gamma^{-}$ (resp. $\Gamma^{+}$) and the band limited by the two straight lines orthogonal to $\Gamma^{-}$ (resp. $\Gamma^{+}$)
	and passing through $\partial\Gamma^{-}$ (resp. $\partial\Gamma^{+}$);
	see~Fig.~\ref{FIG1}.
	
%%%% FIGURE
\begin{figure}[h]
\vskip-3cm
\centering\includegraphics[scale=0.5]{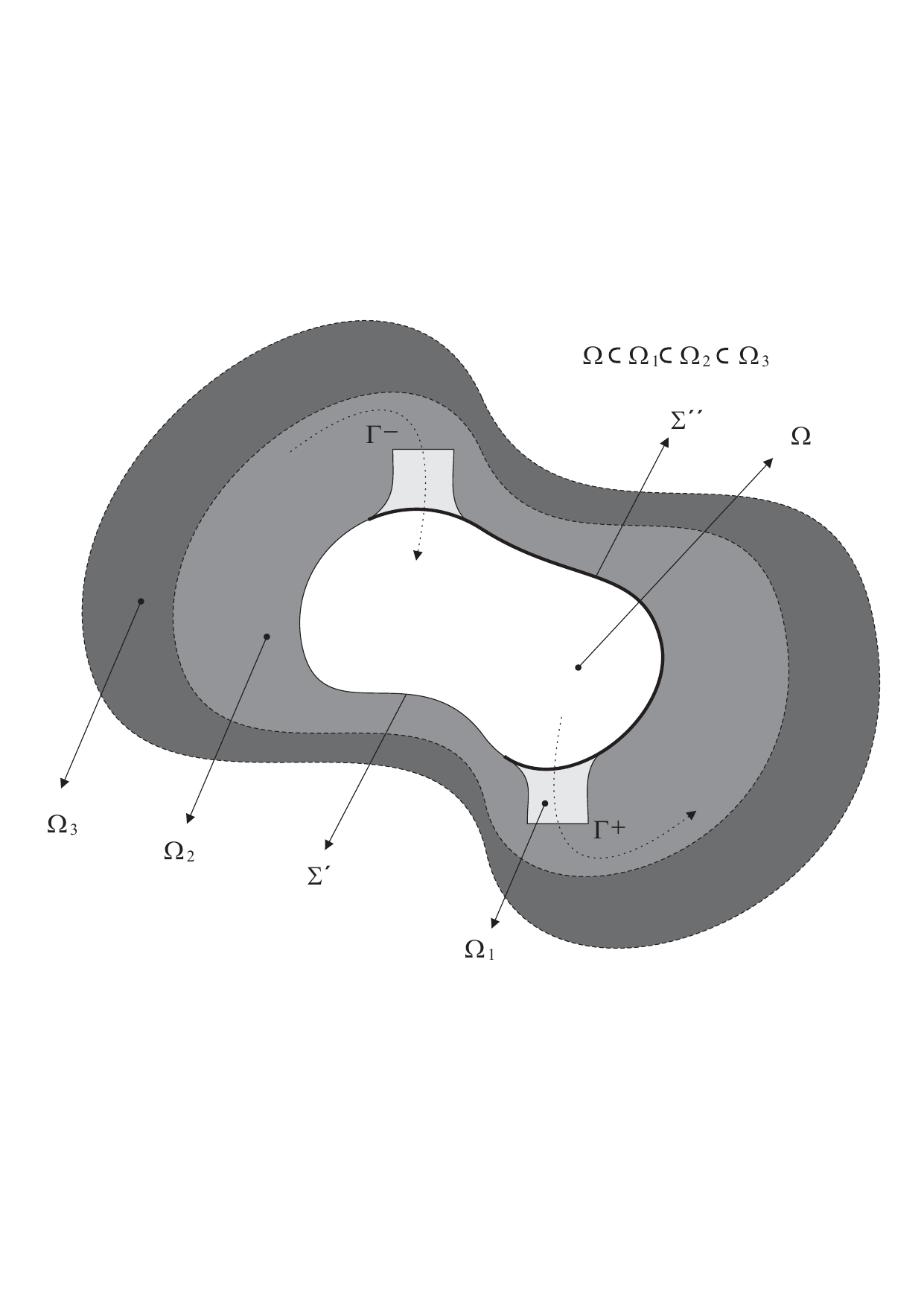}
\vskip-3cm
    \caption{The domains $\Om$, $\Om_1$, $\Om_2$ and~$\Om_3$}\label{FIG1}
\end{figure}
	
	Let $\varphi$ be the solution to	
\begin{equation}\label{trajectory}
	\left\{
		\begin{array}{lll}
    			-\Delta \varphi=0										& \text{in}& \Om_1,          \\
			\noalign{\smallskip} \dis \varphi = 1                                  			& \text{on}& \Gamma^+,  \\
			\noalign{\smallskip} \dis \varphi = -1                                 			& \text{on}& \Gamma^{-}, \\
			\noalign{\smallskip} \dis \frac{\partial\varphi}{\partial n} = 0 	& \text{on}& \Sigma,
		\end{array}
	\right.
\end{equation}
	where $\Sigma=\Sigma'\cup\Sigma''$.
	Then, we have the following result from J.-M. Coron \cite{Coron2}, p.~273:
	
\begin{lemma}\label{L0}
	One has $\varphi\in C^{\infty}(\Ombar_1)$, $-1<\varphi(\xvec)<1$ for all $\xvec\in\Omega_1$ and
\begin{equation}\label{phi}
	\nabla\varphi (\xvec) \neq \0vec\quad\forall\xvec\in \Ombar _1.
\end{equation}
\end{lemma}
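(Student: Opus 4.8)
The plan is to prove Lemma~\ref{L0} in three stages: interior regularity up to the boundary, the strict maximum principle to get $-1<\varphi<1$, and the non-vanishing of $\nabla\varphi$ on $\overline\Omega_1$.

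First I would address regularity. The mixed boundary value problem~\eqref{trajectory} has Dirichlet data on the segments $\Gamma^\pm$ and Neumann data on $\Sigma$, so one must worry about the junction points $\partial\Gamma^-\cup\partial\Gamma^+=\partial\Sigma'\cup\partial\Sigma''$. This is precisely where the geometric hypotheses on $\Omega_1$ near $U^-$ and $U^+$ come in: because $\Omega_1$ coincides, near each segment, with the intersection of a half-plane (bounded by the line through the segment) and a band bounded by the two lines orthogonal to that segment through its endpoints, the boundary meets at right angles at each junction and, crucially, a Dirichlet face meets a Neumann face. By reflecting the orthogonal Neumann faces (even reflection across $\Sigma$) one converts the mixed problem locally near a corner into a pure Dirichlet problem on a smooth (flat) piece of boundary, so elliptic regularity applies and gives $\varphi\in C^\infty$ up to the corner. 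Away from the junctions, interior elliptic regularity and standard boundary regularity for the Dirichlet problem (near $\Gamma^\pm$) and for the Neumann problem (near $\Sigma$, which is $C^\infty$) give $\varphi\in C^\infty$. Patching these yields $\varphi\in C^\infty(\overline\Omega_1)$.

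Next, the bound $-1<\varphi(\xvec)<1$ for $\xvec\in\Omega_1$. Since $\varphi$ is harmonic, $-1\le\varphi\le 1$ on $\overline\Omega_1$ by the weak maximum principle (the maximum and minimum on the Dirichlet part are $1$ and $-1$, and on the Neumann part $\partial\varphi/\partial n=0$ forbids an interior extremum being attained there by Hopf's lemma, or one argues directly via the reflected extension). Strictness in the interior is the strong maximum principle: if $\varphi$ attained the value $1$ at an interior point it would be constant $\equiv 1$, contradicting $\varphi=-1$ on $\Gamma^-$; similarly for the value $-1$. Hence $-1<\varphi<1$ throughout $\Omega_1$.

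The main obstacle is~\eqref{phi}: showing $\nabla\varphi\neq\zerovec$ on \emph{all} of $\overline\Omega_1$, including the Dirichlet faces and the junction points. On the interior, one can invoke the classical fact (valid in 2D for harmonic functions, via the argument that $\varphi_x-i\varphi_y$ is holomorphic so its zeros are isolated with a well-defined index, combined with the structure of level sets of $\varphi$ and the fact that $\varphi$ has no interior critical points by a Morse-theoretic / level-curve count — this is exactly the content invoked in~\cite{Coron2}, p.~273) that a harmonic function solving this two-sided problem has no critical points; alternatively this follows from a careful study of the level curves $\{\varphi=c\}$, each of which connects $\Sigma'$ to $\Sigma''$ without closing up. On $\Gamma^+$ (resp. $\Gamma^-$), $\varphi$ is constant, so $\nabla\varphi$ is normal there, and Hopf's lemma applied at a boundary maximum/minimum point gives $\partial\varphi/\partial n<0$ on $\Gamma^+$ and $>0$ on $\Gamma^-$, hence $\nabla\varphi\neq\zerovec$ there; the geometric interior-ball condition needed for Hopf holds by the flatness of $\Gamma^\pm$. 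On $\Sigma$, $\partial\varphi/\partial n=0$ but the tangential derivative cannot also vanish without forcing a zero of the holomorphic function $\varphi_x-i\varphi_y$ on the (reflected) smooth boundary, which is again excluded. The junction points are handled by the local reflection used for regularity together with these interior/boundary statements. I would simply cite~\cite{Coron2} for this last, most delicate point, as the statement attributes the lemma to Coron, and reconstruct only the short maximum-principle and Hopf-lemma arguments in detail.

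\begin{proof}[Proof of Lemma~\ref{L0}]
See~\cite{Coron2}; we only sketch the argument. The $C^\infty(\overline\Omega_1)$-regularity follows from interior elliptic regularity, boundary regularity for the Dirichlet problem near $\Gamma^\pm$ and for the Neumann problem near the $C^\infty$ curves $\Sigma$, and, near the junction points $\partial\Gamma^-\cup\partial\Gamma^+$, from an even reflection across the flat Neumann faces (which, by the assumed local geometry of $\Omega_1$ near $U^\pm$, are orthogonal to $\Gamma^\pm$), turning the mixed problem into a Dirichlet problem on a smooth boundary piece. The inequality $-1\le\varphi\le1$ on $\overline\Omega_1$ is the weak maximum principle together with the Hopf boundary-point lemma on $\Sigma$, and the strict inequality in $\Omega_1$ is the strong maximum principle, using $\varphi=1$ on $\Gamma^+$ and $\varphi=-1$ on $\Gamma^-$. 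Finally, \eqref{phi} follows from the fact that the harmonic function $\varphi$ has no critical point in $\overline\Omega_1$: in $\Omega_1$ one uses that $\varphi_{x_1}-i\varphi_{x_2}$ is holomorphic together with a description of the level curves of $\varphi$, which join $\Sigma'$ to $\Sigma''$; on $\Gamma^\pm$ one applies Hopf's lemma to get $\partial\varphi/\partial n\neq0$; and on $\Sigma$ and at the junctions one combines $\partial\varphi/\partial n=0$ with the absence of zeros of $\varphi_{x_1}-i\varphi_{x_2}$ on the reflected smooth boundary.
\end{proof}
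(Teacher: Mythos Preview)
Your proposal is correct and in fact goes well beyond what the paper does: the paper gives no proof of this lemma at all, simply attributing the result to Coron~\cite{Coron2}, p.~273. Your sketch of the regularity via reflection at the right-angle Dirichlet--Neumann junctions, the strong maximum principle for the strict bounds, and the Hopf lemma / holomorphic-function argument for~\eqref{phi} is the standard route and is consistent with what one finds in~\cite{Coron2}.
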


	Let $\gamma\in C^{\infty}([0,1])$  be a non-zero function such that $\text{Supp\,} \gamma \subset(0,1/2)\cup (1/2,1)$ and the sets $(\text{Supp\,} \gamma)\cap (0,1/2)$ and $(\text{Supp\,} \gamma)\cap (1/2,1)$ are non-empty.
	
	Let $M>0$ be a constant to be chosen below and set
	\[
\overline{\yvec}(\xvec,t):=M\gamma(t)\nabla\varphi(\xvec), \ \
\overline p(\xvec,t):=-M\gamma_t(t)\varphi(\xvec)-\frac{M^2}{2}\gamma(t)^2|\nabla\varphi(\xvec)|^2, \ \ 
\overline\theta\equiv0.
	\]
	Then \eqref{inv-bous} is satisfied by $(\overline\yvec,\overline p,\overline\theta)$ for $T=1$, $\yvec_0=\0vec$ and~$\theta_0=0$.
	The triplet $(\overline\yvec,\overline p,\overline\theta)$ is thus a nontrivial trajectory of~\eqref{inv-bous} that connects the zero state to itself.

	Let $\Om_3$ be a bounded open set of class $C^{\infty}$ such that $\Om_1\subset\subset \Om_3$.
	We extend $\varphi$ to $\Ombar_3$ as a $C^{\infty}$ function with compact support in $\Om_3$ and we still denote this extension by $\varphi$.
	Let us introduce $\yvec^*(\xvec,t):=M\gamma(t)\nabla \varphi(\xvec)$ (observe that $\overline\yvec$ is the restriction of $\yvec^*$ to 
	$\Ombar\times [0,1]$).
	Also, consider the associated {\it flux function} ~$\Yvec^*:\Ombar_3\times[0,1]\times[0,1]\mapsto \Ombar_3$, defined as follows:
\begin{equation}\label{Xestrella}
	\left\{
		\begin{array}{l}
			\Yvec^*_t(\xvec,t,s) = \yvec^*(\Yvec^*(\xvec,t,s),t)\\
			\Yvec^*(\xvec,s,s)=\xvec.
		\end{array}
	\right.
\end{equation}
	Obviously, $\Yvec^*$ contains all the information on the trajectories of the particles transported by the velocity field $\yvec^*$.
	The flux $\Yvec^*$ is of class $C^{\infty}$ in $\Ombar_3\times[0,1]\times[0,1]$.
	Furthermore, $\Yvec^*(\cdot\,,t,s)$ is a diffeomorphism of $\Ombar_3$ onto itself
	and $(\Yvec^*(\cdot\,,t,s))^{-1}=\Yvec^*(\cdot\,,s,t)$ for all $s,t\in [0,1]$.	
\begin{rmq}\label{IN-OUT}{\rm
	From the definition of $\yvec^*$ and the boundary conditions on $\Om_1$ satisfied by $\varphi$, we observe that the particles cannot 
	cross $\Sigma$.
	Since $\varphi$ is constant on $\Gamma^+$, the gradient $\nabla\varphi$ is parallel to the normal vector on $\Gamma^+$.
	Since $\varphi$ attains a maximum at any point of $\Gamma^+$,  we have $\nabla\varphi\cdot\nvec > 0$ on $\Gamma^+$, 
	whence $\yvec^* \cdot \nvec \geq 0$ on~$\Gamma^+\times [0,1]$. Similarly, $\yvec^* \cdot \nvec \leq 0$ on $\Gamma^{-}\times[0,1]$.
	Consequently, the particles having velocity $\yvec^*$ can leave $\Omega_1$ only through~$\Gamma^+$ and can enter 
	$\Omega_1$ only through~$\Gamma^{-}$.
}
\Fin
\end{rmq}

	The following lemma shows that the particles that travel with velocity $\yvec^*$ and are inside $\Ombar_1$ at time
	$t=0$ (resp.\,\,$t=1/2$) will be outside $\Ombar_1$ at time $t=1/2$ (resp.~$t=1$).
	
\begin{lemma}\label{Lemma-M}
	There exist $M>0$ (large enough) and a bounded open set $\Om_2$ satisfying $\Om_1 \subset\subset \Om_2 \subset\subset \Om_3$ such that
\begin{equation}\label{OM_2}
	\Yvec^*(\xvec,1/2,  0) \not \in\Ombar_2 \ \ \text{and} \ \ \Yvec^*(\xvec,1,1/2) \not \in\Ombar_2 \ \ \forall \xvec\in \overline\Omega_2.
\end{equation}
\end{lemma}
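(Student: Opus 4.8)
The plan is to exploit the simple, explicit structure of the velocity field $\yvec^* = M\gamma(t)\nabla\varphi$ and the non-vanishing of $\nabla\varphi$ on the compact set $\Ombar_1$ given by Lemma~\ref{L0}. The key observation is that $\Yvec^*$ depends on $M$ only through a time rescaling: if we write $\Gamma(t) := \int_0^t \gamma(\tau)\,d\tau$, then along a trajectory $s\mapsto \Yvec^*(\xvec,s,0)$ one has $\frac{d}{ds}\Yvec^* = M\gamma(s)\nabla\varphi(\Yvec^*)$, so that $\Yvec^*(\xvec,t,0)$ coincides with the flow of the \emph{autonomous} field $\nabla\varphi$ (extended to $\Ombar_3$ with compact support) evaluated at the rescaled time $M\,\Gamma(t)$. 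Since $\gamma$ is a fixed non-zero smooth function with the stated support properties, $\Gamma(1/2) = \int_0^{1/2}\gamma \neq 0$ (by the support condition on $(0,1/2)$) and $\Gamma(1) - \Gamma(1/2) = \int_{1/2}^1 \gamma \neq 0$; call these nonzero numbers $a_1$ and $a_2$. Thus it suffices to show that the flow $\Phi_\tau$ of $\nabla\varphi$ carries $\Ombar_1$ entirely outside any prescribed large compact set once $|\tau|$ is large enough, uniformly on $\Ombar_1$.

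To see this, I would use the first integral: along any integral curve $\sigma\mapsto\Phi_\sigma(\xvec)$ of $\nabla\varphi$, one has $\frac{d}{d\sigma}\varphi(\Phi_\sigma(\xvec)) = |\nabla\varphi(\Phi_\sigma(\xvec))|^2 \ge 0$, so $\varphi$ is nondecreasing along trajectories, strictly so as long as the trajectory remains in $\Ombar_3$ where $\nabla\varphi$ can vanish only outside $\Ombar_1$ — in fact on $\Ombar_1$ we have $|\nabla\varphi|\ge c_0 := \min_{\Ombar_1}|\nabla\varphi| > 0$. Consequently, as long as a trajectory stays in $\Ombar_1$, $\varphi$ increases at rate at least $c_0^2$; since $\varphi$ is bounded on $\Ombar_1$ (by $1$, say, via Lemma~\ref{L0}), after a time at most $2/c_0^2$ every trajectory starting in $\Ombar_1$ must have left $\Ombar_1$. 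Moreover, by Remark~\ref{IN-OUT} a trajectory can re-enter $\Omega_1$ only through $\Gamma^-$, where $\varphi=-1$ is the minimum of $\varphi$ on $\Ombar_1$; but $\varphi$ is nondecreasing along the flow, so once a trajectory has left through $\Gamma^+$ (where $\varphi=1$) it cannot come back. Hence for $\sigma \ge 2/c_0^2$, $\Phi_\sigma(\Ombar_1)\cap\Ombar_1=\emptyset$; and since $\nabla\varphi$ has compact support in $\Om_3$ and is smooth, the set $\Phi_\sigma(\Ombar_1)$ depends continuously on $\sigma$ and stays inside $\Ombar_3$, so the family $\{\Phi_\sigma(\Ombar_1): \sigma\in[2/c_0^2, \Sigma_{\max}]\}$ is contained in a fixed compact subset $K$ of $\Om_3\setminus\Ombar_1$; more carefully, I would argue that $\bigcup_{\sigma\ge 2/c_0^2}\Phi_\sigma(\Ombar_1)$ is relatively compact in $\Om_3$ and disjoint from $\Ombar_1$, so one can choose a smooth bounded open set $\Om_2$ with $\Om_1\subset\subset\Om_2\subset\subset\Om_3$ whose closure misses this union.

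Finally I choose $M$ so that $M|a_1|\ge 2/c_0^2$ and $M|a_2|\ge 2/c_0^2$ (taking into account the signs of $a_1,a_2$ by using the backward flow if some $a_i<0$; note $\varphi$ is nonincreasing along the backward flow, and the roles of $\Gamma^+$ and $\Gamma^-$ swap, so the same argument applies). Then $\Yvec^*(\xvec,1/2,0)=\Phi_{Ma_1}(\xvec)\notin\Ombar_2$ and $\Yvec^*(\xvec,1,1/2)=\Phi_{Ma_2}(\xvec)\notin\Ombar_2$ for all $\xvec\in\Ombar_2$, after possibly slightly enlarging $\Om_2$ within $\Om_3$ so that the statement holds for starting points in $\Ombar_2$ and not merely $\Ombar_1$ (the same monotonicity argument works verbatim with $\Ombar_1$ replaced by any compact set on which $\nabla\varphi$ is bounded below, provided that set still lies in the region where $\varphi$ is strictly increasing along the flow — this is where one must be a little careful, since $\nabla\varphi$ may vanish in $\Om_3\setminus\Ombar_1$). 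The main obstacle is precisely this last point: controlling the flow on $\Ombar_2\setminus\Ombar_1$, where the extended $\varphi$ need no longer be monotone or have non-vanishing gradient. I would handle it by first fixing $\Om_2$ close enough to $\Om_1$ that $\nabla\varphi$ is still bounded below on $\Ombar_2$ (possible since $\nabla\varphi\neq\0vec$ on the \emph{closed} set $\Ombar_1$, hence on a neighborhood), and only then choosing $M$ and, if necessary, shrinking the target compact set accordingly; the two choices can be made consistently because the time bound $2/c_0^2$ depends only on $\Om_2$ through the lower bound on $|\nabla\varphi|$, not on $M$.
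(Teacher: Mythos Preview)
Your approach matches the paper's: the paper does not give a self-contained proof but cites Coron and states that the key ingredient is precisely the monotonicity of $t\mapsto\varphi(\Yvec^*(\xvec,t,s))$, which is exactly what you exploit. Your reduction via the time change $\sigma=M\Gamma(t)$ to the autonomous gradient flow $\Phi_\sigma$ of $\nabla\varphi$ is a clean way to isolate the role of $M$.

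There is, however, a genuine gap in the passage from $\Ombar_1$ to $\Ombar_2$. Your proposed fix---choose $\Om_2$ so close to $\Om_1$ that $|\nabla\varphi|\geq c'>0$ on $\Ombar_2$---only yields that the \emph{total} time any trajectory spends in $\Ombar_2$ is bounded by $(\mathrm{osc}_{\Ombar_2}\varphi)/(c')^2$. It does \emph{not} by itself give a time $T_0$ after which the trajectory stays out: outside $\Ombar_2$ the extended $\varphi$ may have critical points, so a trajectory could exit, stall with $\varphi$ constant at some value below $\max_{\Ombar_2}\varphi$, and re-enter $\Ombar_2$ later. Your no-return argument for $\Ombar_1$ (exit through $\Gamma^+$ where $\varphi=1=\max_{\Ombar_1}\varphi$, hence re-entry would force $\varphi$ to decrease) uses the specific boundary structure of $\Om_1$ and does not transfer verbatim to an arbitrary small neighborhood $\Om_2$.

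Two ways to close the gap: either (i) observe that the $\omega$-limit set of every forward orbit lies in $\{\nabla\varphi=0\}$, which is a compact set disjoint from $\Ombar_2$, so each orbit eventually stays in any fixed neighborhood of that critical set and hence outside $\Ombar_2$, and then use compactness of $\Ombar_2$ plus continuity of the flow to get uniformity in the starting point; or (ii) first prove the $\Ombar_1$ statement for \emph{all} $\sigma\geq T_0$, note that $\Phi_\sigma(\Ombar_1)\subset\{\varphi\geq1\}\setminus\Gamma^+$ for such $\sigma$, and then choose the extension of $\varphi$ so that $\{\varphi\geq1\}\cap\Ombar_3$ has positive distance from $\Ombar_1\setminus\Gamma^+$, after which a single continuity-of-$\Phi_\sigma$ argument (uniformly in $\sigma$ on compact time intervals, plus the $\omega$-limit argument for large $\sigma$) lets you pick $\Om_2$. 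Either route is short, but something beyond ``$|\nabla\varphi|$ is bounded below on $\Ombar_2$'' is needed.
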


	The proof is given in \cite{Coron3} and relies on the properties of $\yvec^*$ and, more precisely, on the fact that
	$t\mapsto\varphi(\Yvec^*(\xvec,t,s))$ is nondecreasing.
	
	The next step is to introduce appropriate extension mappings from $\Om$ to $\Om_3$.
	We have the following result from~\cite{Hamil}
	(see Corollary~1.3.7, p.~138):

\begin{lemma}\label{L3}
	For $\ell=1$ and $\ell = 2$, there exist continuous linear mappings $\pi_\ell:\Cvec^0(\overline\Om;\mathbb{R}^\ell)\mapsto 
	\Cvec^0(\Ombar_3;\mathbb{R}^\ell)$	such that
\[
	\left\{
		\begin{array}{l}
			\noalign{\smallskip}
			\dis\pi_\ell(\fvec)=\fvec~~\hbox{in}~~\Om~~\text{and}~~\text{Supp\,} \pi_\ell(\fvec)\subset\Om_2\quad \forall \fvec 
			\in \Cvec^0(\overline\Om;\mathbb{R}^\ell), \\
			\noalign{\smallskip}
			\dis \pi_\ell~\text{maps continuously}~ \Cvec^{m,\lambda}(\overline\Om;\mathbb{R}^\ell)
		 	~\text{into}~\Cvec^{m,\lambda}(\overline{\Om_3};\mathbb{R}^\ell) \quad \forall m\geq 0,\ \ \forall\lambda\in(0,1).
		\end{array}
	\right.
\]
\end{lemma}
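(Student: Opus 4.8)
The plan is to reduce to the scalar case and then invoke a single, regularity-independent extension operator for the smooth bounded domain $\Om$, followed by one fixed cut-off. All the requirements on $\pi_\ell$ are componentwise, so it suffices to construct $\pi_1$ and then set $\pi_2(\fvec):=(\pi_1(f_1),\pi_1(f_2))$ for $\fvec=(f_1,f_2)$; the linearity, continuity, support property and $\Cvec^{m,\lambda}$-continuity of $\pi_2$ follow immediately from those of $\pi_1$.

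For $\ell=1$ the only substantial ingredient is the existence of a \emph{single} continuous linear extension operator $E$ taking scalar functions on $\Ombar$ to scalar functions on $\Ombar_3$, with $Ef=f$ on $\Ombar$, and which is \emph{simultaneously} bounded from $C^{m,\lambda}(\Ombar)$ into $C^{m,\lambda}(\Ombar_3)$ for every integer $m\ge0$ and every $\lambda\in(0,1)$. This is exactly Corollary~1.3.7 of~\cite{Hamil}, applicable here because $\Gamma=\partial\Om$ is of class $C^{\infty}$ and $\Om$ is bounded. (Alternatively, one could construct $E$ by hand via Seeley's reflection method: flatten $\Gamma$ with a finite $C^{\infty}$ atlas, extend across the flattened boundary using a suitably convergent series of reflections, and patch with a partition of unity; the essential point, in any case, is that a \emph{single} $E$ serves all pairs $(m,\lambda)$ at once.) I would simply quote this result as a black box.

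It then remains to localize the support. Since $\Ombar$ is compact and $\Ombar\subset\Om_1\subset\subset\Om_2\subset\subset\Om_3$, I fix once and for all a function $\chi\in C^{\infty}(\Ombar_3)$ with $\supp\chi\subset\Om_2$ and $\chi\equiv1$ on a neighborhood of $\Ombar_1$ (hence on a neighborhood of $\Ombar$), and define
\[
\pi_1(f):=\chi\, Ef,\qquad f\in C^0(\Ombar).
\]
I would then verify the three claims in turn. First, $\pi_1$ is linear and continuous from $C^0(\Ombar)$ into $C^0(\Ombar_3)$, being the composition of $E$ with multiplication by the fixed function $\chi$. Second, $\pi_1(f)=f$ on $\Om$ because $Ef=f$ and $\chi\equiv1$ there, while $\supp\pi_1(f)\subset\supp\chi\subset\Om_2$, as required. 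Third, for every $m\ge0$ and $\lambda\in(0,1)$, multiplication by the fixed function $\chi$ is bounded on $C^{m,\lambda}(\Ombar_3)$ (by the Leibniz rule, all derivatives of $\chi$ being bounded), so $\pi_1$ inherits from $E$ the continuity from $C^{m,\lambda}(\Ombar)$ into $C^{m,\lambda}(\Ombar_3)$. Working componentwise yields the same properties for $\pi_2$.

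\textbf{Main difficulty.} Everything past the extension theorem is routine cut-off bookkeeping. The genuine content is the availability of an extension operator whose bounds are \emph{uniform in the regularity index} --- one operator serving every H\"older class simultaneously --- which is precisely what~\cite{Hamil} provides; the self-contained Seeley construction is classical but requires some care to obtain the H\"older estimates uniformly across charts and along the reflection series.
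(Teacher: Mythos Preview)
Your proposal is correct and matches the paper's approach: the paper simply cites Corollary~1.3.7 of~\cite{Hamil} for this lemma without further argument, and you invoke the very same reference for the regularity-uniform extension operator. The cutoff localization to $\Om_2$ and the componentwise reduction to $\ell=1$ that you spell out are precisely the routine details the paper leaves implicit.
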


	The next lemma asserts that \eqref{OM_2} holds not only for $\yvec^*$ but also for any appropriate extension of any flow $\zvec$ 
	close enough to $\overline\yvec$:
	
\begin{lemma}\label{nu-small}
	For each $\zvec\in C^0(\Ombar\times [0,1];\mathbb{R}^2)$, let us set $\zvec^*=\yvec^*+\pi_2(\zvec-\overline{\yvec})$.
	There exists $\nu>0$ such that, if $\|\zvec-\overline{\yvec}\|_{(0)}\leq \nu$, then
\begin{equation}\label{OM_3}
	\Zvec^*(\xvec,1/2,  0) \not \in\Ombar_2 \ \ \text{and} \ \ \Zvec^*(\xvec,1,1/2) \not \in\Ombar_2 \ \ \forall \xvec\in \overline\Omega_2,
\end{equation}
	where $\Zvec^*$ is the flux function associated to $\zvec^*$.
\end{lemma}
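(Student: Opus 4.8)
The plan is to deduce \eqref{OM_3} from \eqref{OM_2} by combining a continuous-dependence estimate for the flux with the compactness of $\Ombar_2$. First I would note that $\zvec^*$, just like $\yvec^*=M\gamma(t)\nabla\varphi$, vanishes on a neighborhood of $\partial\Om_3$, since $\varphi$ has compact support in $\Om_3$ and $\Supp\pi_2(\zvec-\overline\yvec)\subset\Om_2\subset\subset\Om_3$; hence $\Zvec^*$ is well defined on $\Ombar_3\times[0,1]\times[0,1]$ with values in $\Ombar_3$, exactly as $\Yvec^*$. Next, since $\Yvec^*$ is continuous, $\Ombar_2$ is compact and, by Lemma~\ref{Lemma-M}, the two compact sets $\{\Yvec^*(\xvec,1/2,0):\xvec\in\Ombar_2\}$ and $\{\Yvec^*(\xvec,1,1/2):\xvec\in\Ombar_2\}$ are disjoint from $\Ombar_2$, there exists $d_0>0$ such that
\[
\mathrm{dist}\bigl(\Yvec^*(\xvec,1/2,0),\Ombar_2\bigr)\geq d_0
\ \ \text{ and }\ \
\mathrm{dist}\bigl(\Yvec^*(\xvec,1,1/2),\Ombar_2\bigr)\geq d_0
\qquad\forall\,\xvec\in\Ombar_2 .
\]

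The key step is the bound $\bigl|\Zvec^*(\xvec,t,s)-\Yvec^*(\xvec,t,s)\bigr|\leq C\,\|\zvec-\overline\yvec\|_{(0)}$, valid for all $\xvec\in\Ombar_2$ and $(s,t)\in\{(0,1/2),(1/2,1)\}$ with $C$ independent of $\zvec$. To prove it, fix such $\xvec$, $s$, $t$ and put $\Xvec(\tau):=\Yvec^*(\xvec,\tau,s)$ and $\tilde\Xvec(\tau):=\Zvec^*(\xvec,\tau,s)$. Subtracting the integral identities associated with \eqref{Xestrella} and inserting $\pm\,\yvec^*(\tilde\Xvec(\sigma),\sigma)$ under the integral sign, we obtain
\[
\bigl|\Xvec(\tau)-\tilde\Xvec(\tau)\bigr|\leq L\int_s^{\tau}\bigl|\Xvec(\sigma)-\tilde\Xvec(\sigma)\bigr|\,d\sigma+\max_{\Ombar_3\times[0,1]}\bigl|\yvec^*-\zvec^*\bigr| ,
\]
where $L$ is a Lipschitz constant in the space variable, uniform in $t\in[0,1]$, of the \emph{smooth} field $\yvec^*$ on the compact set $\Ombar_3\times[0,1]$. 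Grönwall's lemma then gives $\bigl|\Xvec(\tau)-\tilde\Xvec(\tau)\bigr|\leq e^{L}\max_{\Ombar_3\times[0,1]}|\yvec^*-\zvec^*|$ on $[0,1]$. Note that this comparison only uses the regularity of $\yvec^*$, never that of $\zvec^*$, so mere continuity of $\zvec$ is enough. Finally, $\zvec^*-\yvec^*=\pi_2(\zvec-\overline\yvec)$ and, since $\pi_2$ is continuous from $\Cvec^0(\Ombar;\mathbb{R}^2)$ into $\Cvec^0(\Ombar_3;\mathbb{R}^2)$ (Lemma~\ref{L3}) and is applied here for each fixed $t$, there is $C>0$ with $\max_{\Ombar_3\times[0,1]}|\yvec^*-\zvec^*|\leq C\,\|\zvec-\overline\yvec\|_{(0)}$.

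It then suffices to choose $\nu>0$ so that $C\,e^{L}\,\nu<d_0$. Indeed, for every $\xvec\in\Ombar_2$, if $\|\zvec-\overline\yvec\|_{(0)}\leq\nu$ then
\begin{align*}
\mathrm{dist}\bigl(\Zvec^*(\xvec,1/2,0),\Ombar_2\bigr)
&\geq \mathrm{dist}\bigl(\Yvec^*(\xvec,1/2,0),\Ombar_2\bigr)-\bigl|\Zvec^*(\xvec,1/2,0)-\Yvec^*(\xvec,1/2,0)\bigr| \\
&\geq d_0-C\,e^{L}\,\nu>0 ,
\end{align*}
so $\Zvec^*(\xvec,1/2,0)\notin\Ombar_2$, and the identical estimate at $(s,t)=(1/2,1)$ gives $\Zvec^*(\xvec,1,1/2)\notin\Ombar_2$, which is \eqref{OM_3}. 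I expect the continuous dependence of the flux on the velocity field to be the only genuinely delicate point; the device that disposes of it is to run the Grönwall comparison against the smooth field $\yvec^*$ instead of $\zvec^*$, so that nothing is required of $\zvec$ beyond continuity.
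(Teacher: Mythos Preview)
Your proof is correct and follows essentially the same route as the paper's: both use compactness together with Lemma~\ref{Lemma-M} to obtain a positive distance $d_0$ between $\Ombar_2$ and the $\Yvec^*$-images, then bound $|\Zvec^*-\Yvec^*|$ by a Grönwall comparison that splits $\yvec^*(\Yvec^*)-\zvec^*(\Zvec^*)$ as $[\yvec^*(\Yvec^*)-\yvec^*(\Zvec^*)]+[\yvec^*(\Zvec^*)-\zvec^*(\Zvec^*)]$ and uses only the Lipschitz regularity of the smooth field $\yvec^*$ plus the $C^0$-continuity of $\pi_2$. Your explicit remark that the comparison requires no regularity of $\zvec$ beyond continuity, and your check that $\zvec^*$ vanishes near $\partial\Om_3$ so that $\Zvec^*$ is well defined, are both apt observations that the paper leaves implicit.
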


\begin{proof}
	Let us set
$$
	\Avec=\left\{\Yvec^*(\xvec,1/2,0)\,:\,\xvec\in \Ombar_2\right\}\cup\left\{\Yvec^*(\xvec,1,1/2)\,:\, \xvec\in \Ombar_2\right\}.
$$
	Both $\Avec$ and $\Ombar_2$ are compact subsets of $\mathbb{R}^2$ and, in view of Lemma~\ref{Lemma-M}, 
	$\Avec \cap \Ombar_2 = \emptyset$.
	Consequently, $d:=\text{dist\,} (\Avec,~\Ombar_2)>0$.
	
	Let us introduce $\Wvec:=\Yvec^*-\Zvec^*$.
	Then, in view of the {\it Mean Value Theorem} and the properties of $\pi_2$, we have:
\[
\begin{alignedat}{2}
	\noalign{\smallskip}|\Wvec(\xvec,t,s)|\leq
	&~\dis M\int_s^t\gamma(\sigma)|\nabla\varphi(\Yvec^*(\xvec,\sigma,s))-\nabla\varphi(\Zvec^*(\xvec,\sigma,s))|\,d\sigma\\
	\noalign{\smallskip}
	&\dis+\int_s^t|\pi_2(\zvec-\overline\yvec)(\Zvec^*(\xvec,\sigma,s),\sigma)|\,d\sigma\\
	\noalign{\smallskip}\leq
	&~\dis  M\|\nabla\varphi\|_0 \int_s^t\gamma(\sigma)|\Wvec(\xvec,\sigma,s)|\,d\sigma+ \int_s^t\|(\pi_2(\zvec-\overline\yvec))(\cdot,\sigma)\|_0\,
	d\sigma\\
	\noalign{\smallskip}\dis\leq
	&~\dis M\|\nabla\varphi\|_0 \int_s^t\gamma(\sigma)|\Wvec(\xvec,\sigma,s)|\,d\sigma+ C\int_s^t\|(\zvec-\overline\yvec)(\cdot,\sigma)\|_0\,d\sigma,\\
\end{alignedat}
\]	
	where $(\xvec,t,s)\in \Ombar_3\times[0,1]\times[0,1]$.
	Therefore, from {\it Gronwall's Lemma,} we find that
\[
\begin{alignedat}{2}
	\noalign{\smallskip}|\Wvec(\xvec,t,s)|
	\leq&~\dis C\left(\int_s^t\|\zvec-\overline\yvec\|_0(\sigma)\,d\sigma\right)\exp\dis\left( M\|\nabla\varphi\|_0 \int_s^t\gamma(\sigma)\,d\sigma \right)\\
	\noalign{\smallskip}
	\leq&~\dis Ce^{M\|\nabla\varphi\|_0 \|\gamma\|_0}\|\zvec-\overline\yvec\|_{(0)}
\end{alignedat}
\]
	and, consequently, there exists $\nu>0$ such that, if $\|\zvec-\overline\yvec\|_{(0)}\leq \nu$, one has
\begin{equation}\label{d/2}
\begin{array}{lll}
	|\Wvec(\xvec,t,s)|\leq\dis \frac{d}{2} \quad \forall(\xvec,t,s)\in \Ombar_3\times[0,1]\times[0,1].
\end{array}
\end{equation}
	Thanks to Lemma~\ref{Lemma-M} and~\eqref{d/2}, we necessarily have \eqref{OM_3} and the proof is achieved.
\end{proof}

%%%%%%%%%%%%%%%%%%%%%%%%%%%%%%%%%%%%%%%%%%
%%%% SUBSECTION 2.2
%%%%%%%%%%%%%%%%%%%%%%%%%%%%%%%%%%%%%%%%%%

\subsection{Construction of a trajectory when $N=3$}\label{sec-3-traj}

	In this Section, we follow \cite{Glass3}.
	As in the two-dimensional case, $\overline \yvec$ will be of the potential form $``\nabla\varphi"$, with the property that any particle traveling 
	with velocity $\overline{\yvec}$ must leave $\overline \Om$ at an appropriate time.
	The main difference will be that, in this three-dimensional case, $\varphi$ will not be chosen independent of $t$.
%
%
%	Add more information on 3d case. 
%
%

	We first recall a lemma:
\begin{lemma}\label{L1}
	Let $\mathscr{O}$ be a regular bounded open set such that $\Om\subset\subset \mathscr{O} $.
	For each $\avec\in\Ombar$, there exists $\phi^\avec\in C^{\infty}(\overline{\mathscr{O}}\times[0,1])$
	such that ~$\supp(\phi^\avec)\subset \mathscr{O}\times\left(1/4,3/4\right)$,
\begin{equation}\label{beta}
\begin{array}{cll}
\left\{
\begin{array}{lll}
	\noalign{\smallskip} \dis	-\Delta\phi^\avec=0 & \hbox{ in } &\Omega\times(0,1),\\
	\noalign{\smallskip} \dis	{\partial \phi^\avec\over\partial \nvec}=0 & \hbox{ on }& (\Gamma\setminus\Gamma_0)\times(0,1)
\end{array}
\right.
\end{array}
\end{equation}
	and
$$
	\mathbf{\Phi}^{\avec}(\avec,1,0)\in \mathscr{O}\setminus\overline{\Omega},
$$
	where $\mathbf{\Phi}^{\avec}:=\mathbf{\Phi}^{\avec}(\xvec,t,s)$ is the flux associated to $\nabla\phi^\avec$, that is, the unique 
	$\mathbb{R}^N-$valued function in 
	$\overline{\mathscr{O}} \times [0,1] \times [0,1]$ satisfying
\begin{equation}\label{Xa}
\left\{
\begin{array}{lll}
	\noalign{\smallskip} \dis\mathbf{\Phi}^{\avec}_t(\xvec,t,s)=\nabla\phi^\avec(\mathbf{\Phi}^{\avec}(\xvec,t,s),t), \\
	\noalign{\smallskip} \dis\mathbf{\Phi}^{\avec}(\xvec,s,s)=\xvec.
\end{array}
\right.
\end{equation}
\end{lemma}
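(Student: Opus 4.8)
The plan is to realize $\phi^\avec$ as the velocity potential of a time-dependent irrotational flow that carries the single particle $\avec$ along a prescribed curve $y(\cdot)$ joining $\avec$ to a point of $\mathscr{O}\setminus\Ombar$, the curve leaving $\Ombar$ only through $\Gamma_0$. The reduction is this: suppose that for each $t$ we can choose $\phi^\avec(\cdot\,,t)$ harmonic in $\Om$, with vanishing normal derivative on $\Gamma\setminus\Gamma_0$, depending smoothly on $(\xvec,t)$, with $\supp\phi^\avec\subset\mathscr{O}\times(1/4,3/4)$, and such that $\nabla\phi^\avec(y(t),t)=\dot y(t)$ for all $t$. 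Then $y$ is itself a solution of the flux system~\eqref{Xa} issued from $\avec$ (constant on $[0,1/4]$ and on $[3/4,1]$), so by uniqueness $\mathbf{\Phi}^{\avec}(\avec,t,0)=y(t)$ and in particular $\mathbf{\Phi}^{\avec}(\avec,1,0)=y(1)\in\mathscr{O}\setminus\Ombar$, as desired. Unlike the $2$D situation of Lemma~\ref{L0}, the $t$-dependence cannot be removed, since harmonic functions in dimension $3$ may have critical points at which a stationary potential flow would get stuck. The curve $y\in C^\infty([0,1];\overline{\mathscr{O}})$ is easy to produce: $\Om$ is connected, $\Gamma_0\subset\Gamma$ is open and nonempty and $\Ombar\subset\subset\mathscr{O}$, so one finds an embedded $y$, constant equal to $\avec$ on $[0,1/4]$, constant on $[3/4,1]$ with $y(1)\in\mathscr{O}\setminus\Ombar$, meeting $\Gamma$ only at points of $\Gamma_0$ and transversally, and running inside $\Om$ elsewhere when $\avec\in\Om$; when $\avec\in\Gamma$ one first lets $y$ slide along $\Gamma$ (inside $\Gamma\setminus\Gamma_0$ as long as $\avec\notin\Gamma_0$) until it reaches $\Gamma_0$, and then exit.

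The technical core is the following surjectivity statement, which I would prove by duality. Let
\[
\mathcal{H}:=\{\,u\in C^\infty(\Ombar)\,:\,\Delta u=0\ \text{in}\ \Om,\ \ \partial u/\partial\nvec=0\ \text{on}\ \Gamma\setminus\Gamma_0\,\}.
\]
Then, for each $q\in\Om$ (resp.\ $q\in\Gamma_0$, resp.\ $q\in\Gamma\setminus\Gamma_0$), the linear map $u\mapsto\nabla u(q)$ is onto $\mathbb{R}^N$ (resp.\ onto $\mathbb{R}^N$, resp.\ onto $T_q\Gamma$). Indeed, let $\mathcal{G}$ be a Neumann function of $\Om$; for $g\in C^\infty(\Gamma_0)$ with $\int_{\Gamma_0}g\,d\sigma=0$ the function $u_g(\xvec):=\int_{\Gamma_0}\mathcal{G}(\xvec,\sigma)g(\sigma)\,d\sigma$ belongs to $\mathcal{H}$ and $\nabla u_g(q)=\int_{\Gamma_0}\nabla_q\mathcal{G}(q,\sigma)g(\sigma)\,d\sigma$. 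If some $\xi\neq\0vec$ annihilated $\nabla u(q)$ for every $u\in\mathcal{H}$, then the function $\sigma\mapsto W(\sigma):=\xi\cdot\nabla_q\mathcal{G}(q,\sigma)$ would be constant on $\Gamma_0$; moreover, since the normal derivative of $\mathcal{G}(q,\cdot)$ on $\Gamma$ does not depend on $\sigma$, the normal derivative of $W$ vanishes on all of $\Gamma$. But $W$ is harmonic in $\sigma$ on $\Om\setminus\{q\}$ with a dipole singularity at $q$; after subtracting the constant it would have trivial Cauchy data on the open piece $\Gamma_0$, hence vanish identically by unique continuation, contradicting the singularity (here $\xi\neq\0vec$). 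The cases $q\in\Gamma_0$ and $q\in\Gamma\setminus\Gamma_0$ are handled the same way, the singularity now lying on $\partial\Om$.

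Granting the claim, I would choose finitely many $u_1,\dots,u_M\in\mathcal{H}$ whose gradients span, at every point of the compact arc $y([1/4,3/4])\cap\Ombar$, the relevant target space (possible by the claim, continuity and compactness), together with a few auxiliary functions supported in $\mathscr{O}\setminus\Ombar$ taking care of the part of the arc outside $\Ombar$. Extending each $u_j$ to some $\widetilde u_j\in C^\infty(\overline{\mathscr{O}})$ with compact support in $\mathscr{O}$ and $\widetilde u_j\equiv u_j$ on $\Ombar$, I set $\phi^\avec(\xvec,t):=\sum_j c_j(t)\,\widetilde u_j(\xvec)$, where $c(t)$ is the minimal-norm solution of $\sum_j c_j\,\nabla\widetilde u_j(y(t))=\dot y(t)$. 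Then $c\in C^\infty([0,1];\mathbb{R}^M)$ because the matrix $(\nabla\widetilde u_j(y(t)))_j$ keeps maximal rank along $y$, and $c(t)=\0vec$ wherever $\dot y(t)=\0vec$, so $\supp\phi^\avec\subset\mathscr{O}\times(1/4,3/4)$; for each $t$, $\phi^\avec(\cdot\,,t)$ is harmonic in $\Om$ (a linear combination of harmonic functions and of functions vanishing on $\Ombar$) with vanishing normal derivative on $\Gamma\setminus\Gamma_0$; and $\nabla\phi^\avec(y(t),t)=\dot y(t)$ by construction. The reduction of the first paragraph then yields the lemma.

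The step I expect to be the real obstacle is, on one hand, the unique-continuation argument behind the surjectivity claim (the behaviour of the Neumann function and of its $q$-derivative, especially when $q\in\partial\Om$) and, on the other hand, the case $\avec\in\Gamma\setminus\Gamma_0$: there the trajectory must slide along the no-flux part of $\Gamma$ before reaching $\Gamma_0$, the admissible normal component of $\nabla u(q)$ degenerates as $q\to\Gamma\setminus\Gamma_0$, and consequently the rank of $(\nabla\widetilde u_j(y(t)))_j$ drops exactly at the transition $y(t)\in\partial\Gamma_0$; one then has to let $\dot y$ vanish to sufficiently high order at that instant to keep $c(\cdot)$ of class $C^\infty$. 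The construction follows the lines of~\cite{Glass3}.
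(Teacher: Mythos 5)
The paper offers no proof of this lemma: it is quoted from \cite{Glass3} (Lemma~2.1 there) and the reader is simply referred to that reference. Your proposal is, in substance, a reconstruction of Glass's argument --- reduce to realizing the velocity $\dot y(t)$ of an exit path through $\Gamma_0$ as $\nabla\phi^{\avec}(y(t),t)$ with $\phi^{\avec}(\cdot\,,t)$ harmonic in $\Om$ and satisfying the homogeneous Neumann condition on $\Gamma\setminus\Gamma_0$; establish surjectivity of the evaluation $u\mapsto\nabla u(q)$ (onto $\mathbb{R}^N$, resp.\ onto $T_q\Gamma$) by a duality and unique-continuation argument; and assemble $\phi^{\avec}$ as a time-dependent finite linear combination of such potentials --- so the strategy is the right one and the interior case is essentially complete. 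The two points you single out are indeed where the work lies. For the surjectivity with $q\in\Gamma$ the dipole argument still works but requires boundary regularity of the Neumann function; alternatively one can place the test singularity at an interior point near $q$ and pass to the limit. For $\avec\in\Gamma\setminus\Gamma_0$, relying on the \emph{minimal-norm} solution $c(t)$ is fragile: the pseudoinverse of $\bigl(\nabla\widetilde u_j(y(t))\bigr)_j$ is discontinuous at a rank change, and as $y(t)$ approaches $\partial\Gamma_0$ from inside $\Gamma_0$ the smallest singular value degenerates, so ``$\dot y$ flat at the transition instant'' only saves you if that degeneration is of finite order, which smooth data need not guarantee. A more robust device is to split the motion into sub-intervals separated by rest phases (slide within $\Gamma$ to a point interior to $\Gamma_0$, stop, then exit transversally), and on each sub-interval glue \emph{local} smooth solutions of the underdetermined linear system by a partition of unity in $t$ --- legitimate because the system is affine in $c$. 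Finally, note that sliding from $\avec$ to $\Gamma_0$ tacitly requires $\Gamma_0$ to meet the connected component of $\Gamma$ containing $\avec$, a hypothesis the paper leaves implicit. With these repairs your argument coincides with that of the cited reference.
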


	The proof is given in~\cite{Glass3}
	(see Lemma~2.1, p.~3).

	With the help of these $\mathbf{\Phi}^\avec$, we can construct a vector field $\yvec^*$ in $\mathscr{O}\times(0,1)$ that makes
	the particles go from $\Omega$ to the outside and then makes them come back.
%
% 
%  arrumar essa frase
%

	Indeed, from the continuity of the functions $\mathbf{\Phi}^\avec$ and the compactness of $\Ombar$, we can find $\avec_1,\avec_2$,
	$\ldots,\avec_k$ in $\Ombar$, real numbers $r_1,\ldots,r_k$, smooth functions $\phi^1:=\phi^{\avec_1},\ldots,\phi^k:=\phi^{\avec_k}$
	satisfying Lemma~\ref{L1} and a bounded open set $\mathscr{O}_0$ with $\Om\subset\subset \mathscr{O}_0\subset\subset \mathscr{O}$, 
	such that
\begin{eqnarray}\label{omegatil}
    	\Ombar\subset\bigcup_{i=1}^{k}B^i\subset\subset \mathscr{O}_0\hbox{~~and~~} 
	\mathbf{\Phi}^i(\overline{B}^i,1,0)\subset \mathscr{O}\setminus\overline{\mathscr{O}}_0,
\end{eqnarray}
	where $B^i:=B(\avec_i;r_i)$ and $\mathbf{\Phi}^i:=\mathbf{\Phi}^{\avec_i}$ for $i=1,\ldots,k$.
	
	As in \cite{Glass3}, the definition of $\yvec^*$ is as follows: let the time $t_i$ be given by
\begin{equation}\label{7}
\begin{array}{l}
	\noalign{\smallskip} \dis t_i=\frac{1}{4}+\frac{i}{4k},~~~ i = 0,\ldots,2k, \\
	\noalign{\smallskip} \dis t_{i+1/2}=\frac{1}{4}+\left(i+{1\over2}\right){1\over4k},~~~ i = 0,\ldots,2k-1
\end{array}
\end{equation}
	and let us set
\begin{equation}\label{betay*}
	\phi(\xvec,t)=
\left\{
\begin{array}{ll}
	\noalign{\smallskip}\dis  0,     					& (\xvec,t)\in{\overline{\mathscr{O}}}\times ([0,1/4] \cup [3/4,1]),		\\	
	\noalign{\smallskip}\dis 8k\phi^{j}(\xvec,8k(t-t_{j-1})),	&(\xvec,t) \in{\overline{\mathscr{O}}}\times\left[t_{j-1},t_{j-1/2}\right], 	\\
	\noalign{\smallskip}\dis-8k\phi^{j}(\xvec,8k(t_j-t)),      	&(\xvec,t) \in{\overline{\mathscr{O}}}\times\left[t_{j-1/2},t_{j}\right]
\end{array}
\right.
\end{equation}
	for $j=1,\ldots,2k$, where $\phi^{k+i}:=\phi^i$ for $i=1,\ldots,k$.
	Then, we set $\yvec^*:=\nabla\phi$ and
	$\overline{\yvec}:=\yvec^*|_{\overline{\Omega}\times[0,1]}$ and we denote by $\Yvec^*$ the flux associated to~$\yvec^*$.

	If we set $\bar p(\xvec,t):=-\phi_t(\xvec,t)-\frac{1}{2}|\nabla \phi(\xvec,t)|^2$ and $\overline\theta\equiv0$, then \eqref{inv-bous} and~\eqref{exact_condition} are verified by $(\overline\yvec,\overline p,\overline\theta)$ for $T=1$, $\yvec_0=\yvec_1=\0vec$ and~$\theta_0=\theta_1=0$.

	\
	
	Thanks to \eqref{omegatil} and \eqref{betay*}, one has:
\begin{lemma}\label{Lemmay*}
	The following property holds for all $i=1,\ldots,k$:
\begin{equation}\label{BBB-ii}
	\Yvec^*(\xvec,t_{i-1/2},0) \in \mathscr{O}\setminus \overline{\mathscr{O}}_0\quad \hbox{and}\quad 
	\Yvec^*(\xvec,t_{k+i-1/2},1/2) \in \mathscr{O}\setminus \overline{\mathscr{O}}_0 \quad \forall \xvec\in B^i.
\end{equation}
\end{lemma}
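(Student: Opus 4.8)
The plan is to read off the flux $\Yvec^*$ directly from the explicit piecewise formula \eqref{betay*} for $\phi$. On each of the elementary intervals $[t_{j-1},t_{j-1/2}]$ and $[t_{j-1/2},t_j]$ the velocity field $\yvec^*=\nabla\phi$ is, up to the constant time dilation by $8k$, exactly one of the fields $\nabla\phi^j$ (run forward, then backward), so $\Yvec^*$ restricted to such an interval is a time-reparametrization of the flux $\mathbf{\Phi}^j$ of Lemma~\ref{L1}. I would then chain these pieces together, using that $\yvec^*$ vanishes on $\overline{\mathscr{O}}\times([0,1/4]\cup[3/4,1])$ and that a \emph{full} interval $[t_{j-1},t_j]$ contributes only the identity map.

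\textbf{Step 1: local identification.} Fix $j\in\{1,\dots,2k\}$ and recall $\phi^{k+i}=\phi^i$, hence $\mathbf{\Phi}^{k+i}=\mathbf{\Phi}^i$. I claim
\[
\Yvec^*(\xvec,t,t_{j-1})=\mathbf{\Phi}^j\bigl(\xvec,8k(t-t_{j-1}),0\bigr)\ \ \text{on}\ [t_{j-1},t_{j-1/2}],\qquad \Yvec^*(\xvec,t,t_{j-1/2})=\mathbf{\Phi}^j\bigl(\xvec,8k(t_j-t),1\bigr)\ \ \text{on}\ [t_{j-1/2},t_j].
\]
Each identity is proved by uniqueness for the characteristic ODE: the two sides agree at the left endpoint, and differentiating the right-hand side in $t$ and using \eqref{Xa} together with $\yvec^*=8k\,\nabla\phi^j(\cdot\,,8k(\cdot-t_{j-1}))$ on $[t_{j-1},t_{j-1/2}]$ (resp. $\yvec^*=-8k\,\nabla\phi^j(\cdot\,,8k(t_j-\cdot))$ on $[t_{j-1/2},t_j]$) shows it solves $\Yvec^*_t(\cdot\,,t,s)=\yvec^*(\Yvec^*(\cdot\,,t,s),t)$. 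Since $8k(t_{j-1/2}-t_{j-1})=8k(t_j-t_{j-1/2})=1$ (see \eqref{7}), evaluating at the right endpoints gives
\[
\Yvec^*(\xvec,t_{j-1/2},t_{j-1})=\mathbf{\Phi}^j(\xvec,1,0),\qquad \Yvec^*(\xvec,t_j,t_{j-1/2})=\mathbf{\Phi}^j(\xvec,0,1),
\]
so, composing and using $\mathbf{\Phi}^j(\mathbf{\Phi}^j(\xvec,1,0),0,1)=\mathbf{\Phi}^j(\xvec,0,0)=\xvec$, the map $\Yvec^*(\cdot\,,t_j,t_{j-1})$ is the identity of $\overline{\mathscr{O}}$ for every $j$.

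\textbf{Step 2: assembling \eqref{BBB-ii}.} Since $\supp\phi\subset\overline{\mathscr{O}}\times[1/4,3/4]$, we have $\yvec^*\equiv\0vec$ on $\overline{\mathscr{O}}\times([0,1/4]\cup[3/4,1])$, hence $\Yvec^*(\cdot\,,t,0)$ is the identity for $t\le t_0=1/4$; recall also $t_k=1/2$. Fix $1\le i\le k$. Decomposing $[0,t_{i-1/2}]$ into $[0,t_0]$, the full intervals $[t_0,t_1],\dots,[t_{i-2},t_{i-1}]$ and the half-interval $[t_{i-1},t_{i-1/2}]$, Step 1 gives
\[
\Yvec^*(\xvec,t_{i-1/2},0)=\Yvec^*(\xvec,t_{i-1/2},t_{i-1})=\mathbf{\Phi}^i(\xvec,1,0);
\]
likewise, decomposing $[1/2,t_{k+i-1/2}]=[t_k,t_{k+i-1/2}]$ into the full intervals $[t_k,t_{k+1}],\dots,[t_{k+i-2},t_{k+i-1}]$ and the half-interval $[t_{k+i-1},t_{k+i-1/2}]$,
\[
\Yvec^*(\xvec,t_{k+i-1/2},1/2)=\Yvec^*(\xvec,t_{k+i-1/2},t_{k+i-1})=\mathbf{\Phi}^{k+i}(\xvec,1,0)=\mathbf{\Phi}^i(\xvec,1,0).
\]
By \eqref{omegatil}, $\mathbf{\Phi}^i(\xvec,1,0)\in\mathscr{O}\setminus\overline{\mathscr{O}}_0$ for every $\xvec\in B^i$, which is exactly \eqref{BBB-ii}.

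\textbf{Expected main difficulty.} There is no genuine obstacle here: the argument is entirely bookkeeping, and the only delicate point is Step 1 — pinning down the dilation factor $8k$ and the internal-clock endpoints so that a half-subinterval realizes precisely one unit of $\mathbf{\Phi}^j$-time, forward on $[t_{j-1},t_{j-1/2}]$ and backward on $[t_{j-1/2},t_j]$, and then checking that the two halves cancel over a full subinterval. One should also note that $\Yvec^*$ and the $\mathbf{\Phi}^j$ are well defined on all of $\overline{\mathscr{O}}$ because $\nabla\phi$ and the $\nabla\phi^j$ are compactly supported in $\mathscr{O}$.
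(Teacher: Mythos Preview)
Your proof is correct and follows essentially the same route as the paper: both identify $\Yvec^*$ on each elementary half-interval with a time-rescaled $\mathbf{\Phi}^j$, use that $\yvec^*\equiv\0vec$ on $[0,1/4]$, and conclude via~\eqref{omegatil}. The only cosmetic difference is that the paper writes down a direct closed formula for $\Yvec^*(\xvec,t,s)$ on products of subintervals, whereas you compose along successive intervals and explicitly verify the forward--backward cancellation $\Yvec^*(\cdot\,,t_j,t_{j-1})=\mathrm{id}$; your presentation is in fact the cleaner of the two.
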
	

	For the proof, it suffices to notice that, in $ \overline{\mathscr{O}}\times [1/4,3/4] \times [1/4,3/4]$, $\Yvec^*(\xvec,t,s)$ is given as
	follows:
\[
			\left\{
				\begin{array}{ll}
					\noalign{\smallskip} \dis
					\mathbf{\Phi}^j(\xvec,8k(t-t_{j-1}),8k(s-t_{l-1}))~~\hbox{if}~~(\xvec,t,s)\in\overline{\mathscr{O}}
					\times[t_{j-1},t_{j-1/2}]\times[t_{l-1},t_{l-1/2}],\\
					\noalign{\smallskip} \dis
					\mathbf{\Phi}^j(\xvec,8k(t-t_{j-1}),8k(t_l-s))~~\hbox{if}~~(\xvec,t,s)\in\overline{\mathscr{O}}
					\times[t_{j-1},t_{j-1/2}]\times[t_{l-1/2},t_l],\\
					\noalign{\smallskip} \dis
					\mathbf{\Phi}^j(\xvec,8k(t_j-t),8k(s-t_{l-1}))~~\hbox{if}~~(\xvec,t,s)\in\overline{\mathscr{O}}
					\times[t_{j-1/2},t_j]\times[t_{l-1},t_{l-1/2}],\\
					\noalign{\smallskip} \dis
					\mathbf{\Phi}^j(\xvec,8k(t_j-t),8k(t_l-s))~~\hbox{if}~~(\xvec,t,s)\in \overline{\mathscr{O}}
					\times[t_{j-1/2},t_j]\times[t_{l-1/2},t_l]
				\end{array}
			\right.
\]
	for all $l,j=1,\ldots,2k$, where $\mathbf{\Phi}^{k+i}$ the flux associated to~$\nabla\phi^{k+i}$ for $i=1,\ldots,k$.
	
	Hence, one has the following for all $i=1,\ldots,k$ and for each $\xvec\in B^i$ :
\[
	\Yvec^{*}(\xvec,t_{i-1/2},0)=\Yvec^{*}(\xvec,t_{i-1/2},1/4)=\Yvec^{*}(\xvec,t_{i-1/2},t_0)
	=\mathbf{\Phi}^{i}(\xvec,1,0)\in  \mathscr{O}\setminus\overline{\mathscr{O}}_0
\]
	and
\[
	 \Yvec^{*}(\xvec,t_{k+i-1/2},1/2)=\Yvec^{*}(\xvec,t_{k+i-1/2},t_k)=\mathbf{\Phi}^{k+i}(\xvec,1,0)=\mathbf{\Phi}^i(\xvec,1,0)
	\in  \mathscr{O}\setminus\overline{\mathscr{O}}_0.
\]

	A result similar to Lemma \ref{L3} also holds here:
\begin{lemma}\label{L3p}
	For $\ell=1$ and $\ell = 3$, there exist continuous linear mappings 
	$\pi_\ell:\Cvec^0(\overline\Om;\mathbb{R}^\ell)\mapsto \Cvec^0(\overline{\mathscr{O}};\mathbb{R}^\ell)$ such that
\[
	\left\{
		\begin{array}{l}
			\noalign{\smallskip}\dis
			\pi_\ell(\fvec)=\fvec~~\hbox{in}~~\Om~~\text{and}~~
			\text{Supp\,} \pi_\ell(\fvec)\subset \mathscr{O}_0\quad \forall \fvec \in \Cvec^0(\overline\Om;\mathbb{R}^\ell), \\
			\noalign{\smallskip}
			\dis \pi_\ell~\text{maps continuously}~ \Cvec^{n,\lambda}(\overline\Om;\mathbb{R}^\ell)
		 	~\text{into}~\Cvec^{n,\lambda}(\overline{\mathscr{O}};\mathbb{R}^\ell) \quad \forall n\geq 0,\ \ \forall\lambda\in(0,1).
		\end{array}
	\right.
\]
\end{lemma}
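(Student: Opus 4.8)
The statement to prove is Lemma~\ref{L3p}, the three-dimensional analogue of Lemma~\ref{L3}: the existence of continuous linear extension operators $\pi_\ell$ (for $\ell=1,3$) from $\Om$ to $\mathscr{O}$ that preserve supports inside $\mathscr{O}_0$ and act continuously on every H\"older space $\Cvec^{n,\lambda}$.

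\textbf{Proof proposal.}

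The plan is to reduce the statement to a standard simultaneous (Whitney/Hestenes/Seeley-type) extension theorem for H\"older spaces on a smooth domain, followed by a cutoff to localize the support. First, since $\Gamma=\partial\Om$ is of class $C^\infty$ and $\Om$ is bounded, one may invoke the construction of~\cite{Hamil} (Corollary~1.3.7, p.~138) — exactly the reference already used for Lemma~\ref{L3} — which produces, for a $C^\infty$ bounded domain, a continuous linear map $E:\Cvec^0(\overline\Om;\mathbb{R}^\ell)\to\Cvec^0(\mathbb{R}^N;\mathbb{R}^\ell)$ (or into $\Cvec^0$ of any fixed larger domain) with $E\fvec=\fvec$ in $\Om$, and with the simultaneous-continuity property that $E$ maps $\Cvec^{n,\lambda}(\overline\Om;\mathbb{R}^\ell)$ continuously into $\Cvec^{n,\lambda}$ of the target for every integer $n\ge0$ and every $\lambda\in(0,1)$; this single operator works for all $(n,\lambda)$ at once, which is the crucial point. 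Here $N=3$, so the two relevant values $\ell=1$ and $\ell=3$ are covered by applying the scalar extension componentwise.

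Next I would localize. Since $\Om\subset\subset\mathscr{O}_0\subset\subset\mathscr{O}$, choose a cutoff function $\chi\in C^\infty_c(\mathscr{O}_0)$ with $\chi\equiv1$ on a neighborhood of $\overline\Om$ (such $\chi$ exists because $\overline\Om$ is a compact subset of the open set $\mathscr{O}_0$). Define
\[
\pi_\ell(\fvec):=\chi\cdot\big(E\fvec\big)\big|_{\overline{\mathscr{O}}}.
\]
Then $\pi_\ell$ is clearly linear in $\fvec$, and $\pi_\ell(\fvec)=\fvec$ on $\Om$ because $\chi\equiv1$ there and $E\fvec=\fvec$ there; also $\supp\pi_\ell(\fvec)\subset\supp\chi\subset\mathscr{O}_0$, giving the support condition. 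Continuity on each $\Cvec^{n,\lambda}$ follows from the continuity of $E$ on that space together with the fact that multiplication by the fixed $C^\infty_c$ function $\chi$ is a bounded operator on $\Cvec^{n,\lambda}(\overline{\mathscr{O}};\mathbb{R}^\ell)$ (Leibniz rule for derivatives up to order $n$, plus the standard product estimate $\|fg\|_{n,\lambda}\le C\|f\|_{n,\lambda}\|g\|_{n,\lambda}$ with $C$ depending only on $n,\lambda$ and $\mathscr{O}$); the norm of $\pi_\ell$ on $\Cvec^{n,\lambda}$ is bounded by $\|E\|_{\mathcal L(\Cvec^{n,\lambda})}$ times $\|\chi\|_{C^{n+1}}$ up to a dimensional constant. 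Continuity on $\Cvec^0$ is the same argument with $n=0$ and no H\"older seminorm.

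The one point deserving care — the ``main obstacle,'' such as it is — is making sure the extension operator one quotes genuinely has the \emph{simultaneous} continuity property (one and the same $E$ continuous on every $\Cvec^{n,\lambda}$), rather than merely providing, for each fixed regularity, some operator depending on $(n,\lambda)$. This is exactly what the Seeley-type construction in~\cite{Hamil} delivers, and it is what was already relied upon in Lemma~\ref{L3}; so strictly speaking the proof of Lemma~\ref{L3p} is identical to that of Lemma~\ref{L3}, only with $N=2$ replaced by $N=3$, the role of $\Om_3$ played by $\mathscr{O}$, and that of $\Om_2$ by $\mathscr{O}_0$. Accordingly, I would simply write: ``The proof is word for word the same as that of Lemma~\ref{L3}, replacing $\Om_3$ by $\mathscr{O}$ and $\Om_2$ by $\mathscr{O}_0$, and using the $C^\infty$ regularity of $\Gamma$; it again rests on Corollary~1.3.7 of~\cite{Hamil}.'' \Fin
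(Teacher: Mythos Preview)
Your proposal is correct and matches the paper's approach: the paper does not give a separate proof of Lemma~\ref{L3p} but simply presents it as the direct analogue of Lemma~\ref{L3}, which in turn is stated as a consequence of Corollary~1.3.7 in~\cite{Hamil}. Your argument (Seeley/Hamilton-type simultaneous extension followed by multiplication by a cutoff supported in $\mathscr{O}_0$) is exactly the intended construction, just spelled out in more detail than the paper provides.
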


	Finally, we have that~\eqref{BBB-ii} also holds for the flux corresponding to any velocity field close enough 
	to $\overline{\yvec}$:

\begin{lemma}\label{Lemmalocal}
	For each $\zvec\in C^0(\Ombar\times [0,1];\mathbb{R}^3)$, let us set $\zvec^*=\yvec^*+\pi_3(\zvec-\overline{\yvec})$.
	Then there exists $\nu>0$ such that, if $\|\zvec-\overline{\yvec}\|_{(0)}\leq \nu$ and $ i=1,\ldots,k$, one has:
\begin{equation*}\label{B-near}
	\Zvec^*(\xvec,t_{i-1/2},  0) \in\mathscr{O}\setminus\overline{\mathscr{O}}_0 
	\quad\hbox{and}\quad 
	\Zvec^*(\xvec,t_{k+i-1/2},1/2) \in \mathscr{O}\setminus\overline{\mathscr{O}}_0 \quad \forall \xvec\in B^i,
\end{equation*}
	where $\Zvec^*$ is the flux associated to $\zvec^*$.
\end{lemma}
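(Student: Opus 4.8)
The plan is to reproduce, in the three-dimensional setting, the Gronwall argument already used to prove Lemma~\ref{nu-small} in the two-dimensional case. The statement of Lemma~\ref{Lemmalocal} is the exact analogue of Lemma~\ref{nu-small}, with $\Om_2$ replaced by $\mathscr{O}_0$, with $\pi_2$ replaced by $\pi_3$, and with the two instants $1/2,1$ replaced by the finitely many instants $t_{i-1/2}$ and $t_{k+i-1/2}$ ($i=1,\dots,k$) coming from Lemma~\ref{Lemmay*}. So the structure of the proof should be essentially identical.

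First I would set
\[
\Avec := \bigcup_{i=1}^{k}\Bigl(\{\Yvec^*(\xvec,t_{i-1/2},0):\xvec\in\overline{B}^i\}\cup\{\Yvec^*(\xvec,t_{k+i-1/2},1/2):\xvec\in\overline{B}^i\}\Bigr).
\]
This is a finite union of images of compact sets under the continuous map $\Yvec^*$, hence compact, and by Lemma~\ref{Lemmay*} it is contained in $\mathscr{O}\setminus\overline{\mathscr{O}}_0$; in particular $\Avec\cap\overline{\mathscr{O}}_0=\emptyset$, so $d:=\mathrm{dist}(\Avec,\overline{\mathscr{O}}_0)>0$. Note $\mathscr{O}\setminus\overline{\mathscr{O}}_0$ is open, so a $d/2$-neighbourhood of $\Avec$ still sits inside it.

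Next I would estimate $\Wvec:=\Yvec^*-\Zvec^*$. Writing the two flux equations \eqref{Xa}-type identities in integral form and subtracting, using $\zvec^*=\yvec^*+\pi_3(\zvec-\overline\yvec)=\nabla\phi+\pi_3(\zvec-\overline\yvec)$, one gets for $(\xvec,t,s)\in\overline{\mathscr{O}}\times[0,1]\times[0,1]$
\[
|\Wvec(\xvec,t,s)|\le \|\nabla\phi\|_{0,1,\alpha}\Bigl|\int_s^t|\Wvec(\xvec,\sigma,s)|\,d\sigma\Bigr| + C\,\Bigl|\int_s^t\|(\zvec-\overline\yvec)(\cdot,\sigma)\|_0\,d\sigma\Bigr|,
\]
where $C$ is the norm of $\pi_3$ as a map $\Cvec^0(\overline\Om;\mathbb{R}^3)\to\Cvec^0(\overline{\mathscr{O}};\mathbb{R}^3)$ and $\|\nabla\phi\|_{0,1,\alpha}$ controls the Lipschitz constant of $\nabla\phi$ in the space variable (the time-regularity of $\phi$ given by \eqref{betay*} is only piecewise smooth but it is globally Lipschitz in $t$ and globally $C^{1}$ in $\xvec$ with bounds independent of everything, which is all we need here — one may also simply invoke that $\nabla\phi$ is bounded and Lipschitz in $\xvec$). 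Gronwall's Lemma then yields
\[
|\Wvec(\xvec,t,s)|\le C\,e^{\|\nabla\phi\|_{0,1,\alpha}}\,\|\zvec-\overline\yvec\|_{(0)}\quad\forall(\xvec,t,s)\in\overline{\mathscr{O}}\times[0,1]\times[0,1].
\]
Choosing $\nu>0$ so that $C e^{\|\nabla\phi\|_{0,1,\alpha}}\nu\le d/2$, we get $\|\Wvec\|_{(0)}\le d/2$ whenever $\|\zvec-\overline\yvec\|_{(0)}\le\nu$. Combined with Lemma~\ref{Lemmay*} and the definition of $d$, this forces $\Zvec^*(\xvec,t_{i-1/2},0)$ and $\Zvec^*(\xvec,t_{k+i-1/2},1/2)$ to lie in the $d/2$-neighbourhood of $\Avec$, hence in $\mathscr{O}\setminus\overline{\mathscr{O}}_0$, for every $\xvec\in B^i$ and every $i=1,\dots,k$. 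This is exactly the claim.

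The only genuinely new point compared with Lemma~\ref{nu-small} is the mild irregularity of $\phi$ in the time variable introduced by the piecewise definition \eqref{betay*}; the main obstacle, such as it is, is to make sure the flux $\Yvec^*$ (and $\Zvec^*$) is well-defined and continuous across the gluing times $t_{j-1/2}$, $t_j$ — but this is already implicit in the construction of $\yvec^*$ preceding the lemma and in the statement of Lemma~\ref{Lemmay*}, where $\Yvec^*$ is used, so for the purposes of this lemma it may be taken for granted. Everything else is a verbatim transcription of the two-dimensional argument with $k$ copies of the endpoint estimate instead of one.
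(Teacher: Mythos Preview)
Your proposal is correct and follows exactly the approach the paper intends: the paper omits the proof entirely, stating only that it ``is very similar to the proof of Lemma~\ref{nu-small} and will be omitted,'' and your argument is precisely that transcription---a compactness/positive-distance step using Lemma~\ref{Lemmay*} followed by a Gronwall estimate on $\Wvec=\Yvec^*-\Zvec^*$. One tiny slip: the $d/2$-neighbourhood of $\Avec$ is automatically disjoint from $\overline{\mathscr{O}}_0$ but need not lie in $\mathscr{O}$; the inclusion $\Zvec^*(\xvec,\cdot,\cdot)\in\mathscr{O}$ comes instead from the fact that $\zvec^*$ is compactly supported in $\mathscr{O}$, so the flux fixes a neighbourhood of $\partial\mathscr{O}$ and preserves the interior.
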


	The proof is very similar to the proof of Lemma~\ref{nu-small} and will be omitted.

%%%%%%%%%%%%%%%%%%%%%%%%%%%%%%%%%%%%%%%%%%
%%%% SECTION 3
%%%%%%%%%%%%%%%%%%%%%%%%%%%%%%%%%%%%%%%%%%

\section{Proof of Theorem~\ref{T-INV-BOUS}}\label{Sec3}

	This Section is devoted to prove the exact controllability result in Theorem~\ref{T-INV-BOUS}.
	We will assume that Proposition~\ref{P-INV-BOUS} is satisfied and we will employ a scaling argument and the fact that, when $\kappa = 0$, \eqref{inv-bous} is time reversible.

	Let $T>0$, $\theta_0, \theta_1\in C^{2,\alpha}(\Ombar )$ and $\yvec_0, \yvec_1\in\Cvec(2,\alpha,\Gamma_0)$ be given.
	Let us see that, if
$$
	\|\yvec_0\|_{2,\alpha} + \|\yvec_1\|_{2,\alpha} + \|\theta_0\|_{2,\alpha} + \|\theta_1\|_{2,\alpha}
$$
	is small enough, we can construct a triplet $(\yvec,p,\theta)$ satisfying \eqref{inv-bous} and~\eqref{exact_condition}.
		
	If $\varepsilon\in (0,T/2)$ is small enough to have
$$
	\max \{\dis \varepsilon\| \yvec_0\|_{2,\alpha},\varepsilon^2\|\theta_0\|_{2,\alpha}\}\leq \delta \ \ 
	\text{(resp.}~\max \dis\{\varepsilon\| \yvec_1\|_{2,\alpha},~\varepsilon^2\|\theta_1\|_{2,\alpha}\}\leq \delta),
$$
	then, thanks to Proposition~\ref{P-INV-BOUS}, there exist $\dis (\yvec^0,\theta^0)$ in 
	$C^0([0,1]; \Cvec^{1,\alpha}(\overline\Om;\mathbb{R}^{N+1}))$ and a pressure $p^0$ (resp.~$(\yvec^1,\theta^1)$ and $p^1$) 
	solving \eqref{inv-bous}, with $\yvec^0(\xvec,0)\equiv\varepsilon \yvec_0(\xvec)$ and $\theta^0(\xvec,0)\equiv\varepsilon^2\theta_0(\xvec)$ 
	(resp.~$\yvec^1(\xvec,0)\equiv-\varepsilon \yvec_1(\xvec)$ and $\theta^1(\xvec,0)=\varepsilon^2\theta_1(\xvec)$) and satisfying  \eqref{cont-local}.

	Let us choose $\varepsilon$ of this form and let us introduce
	$\yvec:\Ombar \times [0,T]\mapsto \mathbb{R}^N$,	$p:\Ombar \times [0,T]\mapsto \mathbb{R}$ and
	$\theta:\Ombar \times [0,T]\mapsto \mathbb{R}$ as follows:
\[
	\begin{array}{ll}
		\noalign{\smallskip} \dis
		\left\{
			\begin{array}{l}
    				\noalign{\smallskip} \dis \yvec(\xvec,t) = \varepsilon^{-1}\yvec^0(\xvec,\varepsilon^{-1}t),           	\\
    				\noalign{\smallskip} \dis p(\xvec,t) = \varepsilon^{-2}p^0(\xvec,\varepsilon^{-1}t),               		\\
				\noalign{\smallskip} \dis \theta(\xvec,t) = \varepsilon^{-2}\theta^0(\xvec,\varepsilon^{-1}t),
			\end{array}
		\right.
		& \text{for }\ (\xvec,t) \in \Ombar \times[0,\varepsilon],                                                   	 			\\
		\\
		\noalign{\smallskip} \dis
		\left\{
			\begin{array}{l}
				\noalign{\smallskip} \dis     \yvec(\xvec,t) = \0vec,									\\
				\noalign{\smallskip} \dis     p(\xvec,t) = 0,								       			\\
				\noalign{\smallskip} \dis     \theta(\xvec,t) = 0,
			\end{array}
		\right.
		& \text{for }\ (\xvec,t) \in \Ombar \times(\varepsilon,T-\varepsilon),                                         			\\
	\end{array}
\]

\[
	\begin{array}{ll}
		\noalign{\smallskip} \dis
		\left\{
			\begin{array}{l}
				\noalign{\smallskip} \dis     \yvec(\xvec,t) = -\varepsilon^{-1}\yvec^1(\xvec,\varepsilon^{-1}(T-t)),  \\
				\noalign{\smallskip} \dis     p(\xvec,t) = \varepsilon^{-2}p^1(\xvec,\varepsilon^{-1}(T-t)),           \\
				\noalign{\smallskip} \dis     \theta(\xvec,t) = \varepsilon^{-2}\theta^1(\xvec,\varepsilon^{-1}(T-t)),
			\end{array}
		\right.
		& \text{for }\ (\xvec,t) \in \Ombar \times[T-\varepsilon,T].											
	\end{array}
\]
	Then, $(\yvec,\theta)\in C^0([0,T]; \Cvec^{1,\alpha}(\Ombar ;\mathbb{R}^{N+1})$ and the triplet $(\yvec,p,\theta)$ satisfies 
	\eqref{inv-bous} and~\eqref{exact_condition}.

%%%%%%%%%%%%%%%%%%%%%%%%%%%%%%%%%%%%%%%%%%
%%%% SECTION 4
%%%%%%%%%%%%%%%%%%%%%%%%%%%%%%%%%%%%%%%%%%

\section{Proof of Proposition \ref{P-INV-BOUS}. The 2D case}\label{Sec4}

	Let $\mu\in C^{\infty}([0,1])$ be a function such that $\mu\equiv1$ in $[0,1/4]$, $\mu\equiv0$ in $[1/2,1]$ and $0<\mu<1$.
	Proposition~\ref{P-INV-BOUS} is a consequence of the following result:
\begin{propo}\label{P1}
	There exists $\delta>0$ such that, if~$\max\left\{\|\yvec_0\|_{2,\alpha},\|\theta_0\|_{2,\alpha}\right\} \leq \delta$, then the coupled system
\begin{equation}\label{p-ext1}
	\left\{
		\begin{array}{lll}
		    	\noalign{\smallskip} \dis \zeta_t +\yvec\cdot \nabla\zeta = - \mathbf{k}\times \nabla \theta	&\text{in}&  \Om\times(0,1), 		\\
		    	\noalign{\smallskip} \dis \theta_t + \yvec\cdot \nabla\theta=  0							&\text{in}&  \Om\times(0,1), 		\\
		    	\noalign{\smallskip} \dis \nabla \cdot \yvec = 0, ~ \nabla\times \yvec  = \zeta 				&\text{in}&  \Om\times(0,1), 		\\
		    	\noalign{\smallskip} \dis \yvec\cdot\nvec= (\overline{\yvec}+\mu\, \yvec_0)\cdot \nvec			&\text{on}&  \Gamma\times (0,1),     	\\
		    	\noalign{\smallskip} \dis \zeta(0) = \nabla\times\yvec_0,~ \theta(0)=\theta_0				&\text{in}&  \Om,            			
		\end{array}
	\right.
\end{equation}
	possesses at least one solution $(\zeta,\theta,\yvec)$, with
\begin{equation}\label{reg}
	(\zeta,\theta,\yvec)\in C^0([0,1];C^{0,\alpha}(\overline\Om))\times C^0([0,1];C^{1,\alpha}(\overline\Om))
	\times C^0([0,1];\Cvec^{1,\alpha}(\overline\Om;\mathbb{R}^2)),
\end{equation} 	
	such that
\begin{equation}\label{27p}
	\theta(\xvec,t)=0 \quad \text{in} \quad \Omega\times(1/2,1) \quad \text{and} \quad \zeta(\xvec,1)=0 \quad \text{in} \quad \Omega.
\end{equation}
\end{propo}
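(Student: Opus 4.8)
The plan is to recast~\eqref{p-ext1} as a fixed-point problem for the velocity field, built around the return-method trajectory $\overline{\yvec}$ of Section~\ref{sec-2-traj}, and then to close it with Theorem~\ref{fixedpoint}. First I would fix, once and for all, the constant $M>0$, the open set $\Om_2$, the number $\nu>0$ and the extension operators $\pi_1,\pi_2$ provided by Lemmas~\ref{Lemma-M}, \ref{L3} and~\ref{nu-small}, and let $\mu$ be the cut-off introduced above. For a radius $R\in(0,\nu]$ to be chosen later, I would work in
\[
B:=\bigl\{\,\yvec\in C^0([0,1];\Cvec^{1,\alpha}(\Ombar;\R^2))\ :\ \nabla\cdot\yvec=0 \hbox{ in }\Om,\ \yvec\cdot\nvec=(\overline{\yvec}+\mu\,\yvec_0)\cdot\nvec \hbox{ on }\Gamma,\ \|\yvec-\overline{\yvec}\|_{0,1,\alpha}\le R\,\bigr\},
\]
a closed, bounded, convex subset of $C^0([0,1];\Cvec^{1,\alpha}(\Ombar;\R^2))$ on whose elements Lemma~\ref{nu-small} applies (because $R\le\nu$).

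Given $\yvec\in B$, I would construct $\mathcal{F}(\yvec)$ in four steps. \emph{Extension}: set $\zvec^*:=\yvec^*+\pi_2(\yvec-\overline{\yvec})\in C^0([0,1];\Cvec^{1,\alpha}(\Ombar_3;\R^2))$, compactly supported in $\Om_3$, and let $\Zvec^*$ be its flow, a family of diffeomorphisms of $\Ombar_3$ to which Lemma~\ref{nu-small} applies. \emph{Transport of the temperature}: solve $\hat\theta_t+\zvec^*\cdot\nabla\hat\theta=0$ in $\Om_3\times(0,1)$ with $\hat\theta(\cdot,0)=\pi_1\theta_0$, and set $\theta:=\hat\theta|_{\Ombar\times[0,1]}$. \emph{Transport of the vorticity}: solve $\hat\zeta_t+\zvec^*\cdot\nabla\hat\zeta=-\mathbf{k}\times\nabla\hat\theta$ in $\Om_3\times(0,1)$ with $\hat\zeta(\cdot,0)=\pi_1(\nabla\times\yvec_0)$, and set $\zeta:=\hat\zeta|_{\Ombar\times[0,1]}$. \emph{Reconstruction of the velocity}: since $\Om$ is simply connected and $\int_\Gamma(\overline{\yvec}+\mu\yvec_0)\cdot\nvec\,d\Gamma=0$ (both fields being divergence-free in $\Om$), there is a unique $\tilde{\yvec}\in C^0([0,1];\Cvec^{1,\alpha}(\Ombar;\R^2))$ with $\nabla\cdot\tilde{\yvec}=0$, $\nabla\times\tilde{\yvec}=\zeta$ in $\Om$ and $\tilde{\yvec}\cdot\nvec=(\overline{\yvec}+\mu\yvec_0)\cdot\nvec$ on $\Gamma$, obtained by solving $-\Delta\psi=\zeta$ with the Dirichlet datum for the stream function $\psi$ dictated by the prescribed flux and setting $\tilde{\yvec}=\nabla^{\perp}\psi$, using the Schauder estimates. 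I would set $\mathcal{F}(\yvec):=\tilde{\yvec}$. A fixed point $\yvec$ of $\mathcal{F}$ gives a solution of~\eqref{p-ext1} of the regularity~\eqref{reg}: one has $\zeta=\nabla\times\yvec$, the temperature equation holds, and the $2$D curl of $\yvec_t+(\yvec\cdot\nabla)\yvec-\mathbf{k}\theta$ equals $\zeta_t+\yvec\cdot\nabla\zeta+\mathbf{k}\times\nabla\theta$ (via $\nabla\times((\yvec\cdot\nabla)\yvec)=\yvec\cdot\nabla\zeta$ and $\nabla\times(\mathbf{k}\theta)=-\mathbf{k}\times\nabla\theta$), which vanishes by the first line of~\eqref{p-ext1}, so that $\yvec_t+(\yvec\cdot\nabla)\yvec-\mathbf{k}\theta=-\nabla p$ for some $p\in\mathcal{D}'(\Om\times(0,1))$; the relations in~\eqref{27p} (and those needed for~\eqref{cont-local}) then follow from $\gamma(0)=\gamma(1)=0$, $\mu(0)=1$, $\mu(1)=0$ and the uniqueness of the div-curl reconstruction.

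It then remains to check the hypotheses of Theorem~\ref{fixedpoint} with $B_2=C^0([0,1];\Cvec^{1,\alpha}(\Ombar;\R^2))$, $B_1=C^0([0,1];\Cvec^{0}(\Ombar;\R^2))$ and $B$ as above. \emph{Vanishing.} Since $\pi_1\theta_0$ and $\pi_1(\nabla\times\yvec_0)$ are supported in $\Om_2$ and $\zvec^*$ satisfies~\eqref{OM_3}, reading off the characteristics gives $\theta\equiv0$ in $\Om\times(1/2,1)$, and then, writing $\hat\zeta$ by Duhamel's formula along the flow and using that $\nabla\hat\theta$ is supported away from the backward trajectories issued from $\Om$ at time $1$, that $\zeta(\cdot,1)\equiv0$ in $\Om$; here one has to combine~\eqref{OM_3} (which only controls the flow at the instants $1/2$ and $1$) with the support hypothesis on $\gamma$ and the monotonicity of $t\mapsto\varphi(\Yvec^*(\xvec,t,s))$ in order to cover the whole subinterval $(1/2,1)$. \emph{Self-mapping.} Lemma~\ref{lemma Bardos}, applied on $\Om_3$ (where $\zvec^*$ is tangent to the boundary), gives $\|\theta\|_{0,1,\alpha}\le C_M\|\theta_0\|_{1,\alpha}$ and then $\|\zeta\|_{0,0,\alpha}\le C_M(\|\theta_0\|_{1,\alpha}+\|\yvec_0\|_{1,\alpha})$, with $C_M$ depending only on $M$, $\Om$ and $\alpha$ (through $\int_0^1\|\zvec^*(\cdot,t)\|_{1,\alpha}\,dt$, bounded on $B$); the Schauder estimate for $\tilde{\yvec}-\overline{\yvec}$, which solves the div-curl system with data $(\zeta,\mu\yvec_0\cdot\nvec)$, then yields $\|\tilde{\yvec}-\overline{\yvec}\|_{0,1,\alpha}\le C_M(\|\theta_0\|_{2,\alpha}+\|\yvec_0\|_{2,\alpha})\le C_M\delta$, so that $\mathcal{F}(B)\subset B$ as soon as $C_M\delta\le R$. \emph{Continuity and contraction.} Continuous dependence of flows, of transported quantities and of the elliptic solution on the data makes $\mathcal{F}:B\to B$ uniformly continuous for $\|\cdot\|_{B_1}$; and estimating the differences $\theta_1-\theta_2$, $\zeta_1-\zeta_2$ and $\mathcal{F}(\yvec_1)-\mathcal{F}(\yvec_2)$ along the respective characteristics --- exploiting that the data have size $O(\delta)$ and that $\zeta\mapsto\tilde{\yvec}$ gains one derivative --- one would obtain, for $\delta$ small enough, a contraction estimate $\|\mathcal{F}^n(\yvec_1)-\mathcal{F}^n(\yvec_2)\|_{B_1}\le\lambda\|\yvec_1-\yvec_2\|_{B_1}$ with $\lambda<1$, for some $n\ge1$. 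Theorem~\ref{fixedpoint} would then provide a fixed point in $\overline{B}$, whose regularity~\eqref{reg} is checked a posteriori from the uniform $\Cvec^{1,\alpha}$ bounds along the scheme.

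The crux, and the step I expect to be the main obstacle, is exactly this last estimate. Differentiating $\theta=\theta_0\circ\Zvec^*(\cdot,0,\cdot)$ brings in the Jacobian of the flow of the merely $\Cvec^{1,\alpha}$ field $\zvec^*$, and the source $-\mathbf{k}\times\nabla\theta$ of the $\zeta$-equation brings in a first derivative of $\theta$; comparing two such quantities in the weak norm forces one to offset a genuine loss of one derivative against the smallness $\delta$ of the data and the smoothing of the Poisson solve, which is the reason one also has to pass to an iterate $\mathcal{F}^n$. This is precisely where the $C^{2,\alpha}$-regularity of $\yvec_0$ and $\theta_0$ is consumed --- it is needed, for instance, to bound $\|\nabla^2\theta_0\|_0$ and $\|\nabla(\nabla\times\yvec_0)\|_0$ by $C\delta$ in those difference estimates --- and it is also why only $\Cvec^{1,\alpha}$-regularity in space can be guaranteed for the controlled solution, the fixed point being obtained in the closure $\overline{B}$ for the weaker norm $\|\cdot\|_{B_1}$. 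A secondary, more geometric difficulty is the verification of the vanishing of $\theta$ and $\zeta$, for which the whole construction of $\overline{\yvec}$ and Lemmas~\ref{Lemma-M}--\ref{nu-small} in Section~\ref{sec-2-traj} were designed.
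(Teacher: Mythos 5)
Your overall architecture (return-method trajectory, extension to $\Om_3$, transport of $\theta$ and of the vorticity along the extended field, div--curl reconstruction, iterated contraction via Theorem~\ref{fixedpoint}) is the same as the paper's, but there are two genuine gaps.

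First, your functional framework sits one derivative too low, and this is precisely the point you flag as "the crux'': with $B$ bounded only in $C^0([0,1];\Cvec^{1,\alpha}(\Ombar;\mathbb{R}^2))$ and the contraction sought in $C^0([0,1];\Cvec^{0}(\Ombar;\mathbb{R}^2))$, the difference estimates cannot be closed. Writing $\wvec=\zvec^{*,1}-\zvec^{*,2}$, $\Theta=\theta^{1}-\theta^{2}$, $\Upsilon=\zeta^{1}-\zeta^{2}$, one has $\Theta_t+\zvec^{*,1}\cdot\nabla\Theta=-\wvec\cdot\nabla\theta^{2}$ and $\Upsilon_t+\zvec^{*,1}\cdot\nabla\Upsilon=-\wvec\cdot\nabla\zeta^{2}-\mathbf{k}\times\nabla\Theta$. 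Since the source of the $\Upsilon$-equation contains $\nabla\Theta$, you must estimate $\Theta$ in a norm controlling one derivative, and the source $-\wvec\cdot\nabla\theta^{2}$ then costs $\|\wvec\|_{1,\alpha}\,\|\theta^{2}\|_{2,\alpha}$; likewise $\|\wvec\cdot\nabla\zeta^{2}\|_{0,\alpha}$ costs $\|\zeta^{2}\|_{1,\alpha}$. On your $B$ neither $\|\theta^{2}\|_{2,\alpha}$ nor $\|\zeta^{2}\|_{1,\alpha}$ is uniformly bounded (a $C^{2,\alpha}$ datum transported by a merely $C^{1,\alpha}$ flow is only $C^{1,\alpha}$), and $\|\wvec\|_{1,\alpha}$ is not controlled by your $B_1$-norm $\|\wvec\|_{(0)}$. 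The smallness $\delta$ of the data does not repair this, and the one-derivative gain in the Poisson solve only returns you to the level you started from. The paper's choice is forced: the invariant set $\Svec_\nu$ is bounded in $C^0([0,1];\Cvec^{2,\alpha})$, so that $\theta\in C^{2,\alpha}$ and $\zeta\in C^{1,\alpha}$ uniformly, and the contraction (Lemmas~\ref{L-X-est} and~\ref{iteracion}) is taken in $C^0([0,1];\Cvec^{1,\alpha})$. Note also that the contraction there comes from the $(\widetilde C t)^m/m!$ factor produced by iterating the time integral, not from the smallness of the data; $\delta$ is used only for the self-mapping property.

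Second, your ``vanishing'' step is not justified as written. Solving the transport equations on all of $(0,1)$ only yields $\supp\theta(\cdot\,,t)\subset\Zvec^*(\Om_2,t,0)$, and Lemma~\ref{nu-small} controls the perturbed flow only at the two pairs $(t,s)=(1/2,0)$ and $(1,1/2)$; nothing forbids $\Zvec^*(\Om_2,t,0)$ from re-entering $\Om$ for $t\in(1/2,1)$, and the monotonicity of $t\mapsto\varphi(\Yvec^*(\xvec,t,s))$ that you invoke holds for the unperturbed field inside $\Om_1$, not for $\zvec^*$ on $\Om_3$. For the same reason the inclusion $\supp\zeta(\cdot\,,1)\subset\Zvec^*(\overline\Om_2,1,0)$ gives nothing: \eqref{OM_3} at $t=1$ concerns $\Zvec^*(\cdot\,,1,1/2)$ applied to $\overline\Om_2$, not to the set $\Zvec^*(\overline\Om_2,1/2,0)$. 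The paper avoids both problems by cutting at $t=1/2$: $\theta$ is redefined to be identically zero on $[1/2,1]$ (legitimate because $\theta^*(\cdot\,,1/2)=0$ on $\Om_2$), and the vorticity is restricted to $\Ombar$ at $t=1/2$, re-extended by $\pi_1$ so as to be supported in $\Om_2$ again, and then transported homogeneously on $[1/2,1]$, so that the second half of \eqref{OM_3} applies. Some form of this gluing is needed; the global-in-time transport alone does not deliver \eqref{27p}.
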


	The reminder of this section is devoted to prove Proposition \ref{P1}.
	We are going to adapt some ideas from Bardos and Frisch \cite{Bardos} and Kato \cite{Kato}, already used in \cite{Coron3} and \cite{Glass1}.
	Let us give a sketch.	

	We will start from an arbitrary field $\zvec = \zvec(\xvec,t)$ in a suitable class $\Svec$ of continuous functions.
	To this $\zvec$, we will associate a scalar function $\theta$ 
	(a temperature) verifying
\[
	\left\{
\begin{array}{lll}
    \noalign{\smallskip}\dis \theta_t + \zvec\cdot\nabla\theta = 0 	&\hbox{in}&\Om\times(0,1),\\
    \noalign{\smallskip} \theta(\xvec,0) = \theta_0(\xvec)                	&\hbox{in}&\Om.
    \end{array}
    \right.
\]
	and
$$
	\theta(\xvec,t)=0 \quad \hbox{in}\quad \Om\times(1/2,1).
$$
	With the help of $\theta$, we will then construct a function $\zeta$
	(an associated vorticity) satisfying
\[
	\left\{
		\begin{array}{lll}
			\noalign{\smallskip} \dis \zeta_t +\zvec\cdot \nabla\zeta = -  \mathbf{k}\times \nabla \theta 		 &\hbox{in}& \Om\times(0,1), \\
			\noalign{\smallskip} \dis \zeta(0) = \nabla\times\yvec_0                                     					 &\hbox{in}& \Om.
		\end{array}
	\right.
\]
	and
$$
	\zeta(\xvec,1) = 0 \quad\hbox{in}\quad \Om.
$$
	Then, we will construct a field $\yvec=\yvec(\xvec,t)$ such that $\nabla \times \yvec = \zeta$ and $\nabla \cdot \yvec = 0$.
	 This way, we will have defined a mapping $F$ with $F(\zvec)=\yvec$.
	We will choose $\Svec$ such that $F$ maps $\Svec$ into itself and an appropriate extension of $F$ 
	possesses exactly one fixed-point~$\yvec$.
	Finally, it will be seen that the triplet $(\zeta,\theta,\yvec)$, where $\zeta$ and $\theta$ are respectively the vorticity and 
	temperature associated to $\yvec$, solves \eqref{p-ext1} and satisfies \eqref{reg}.

	Let us now give the details.

	The good definition of $\Svec$ is as follows.
	First, let us denote by $\Svec'$ the set of fields $\zvec\in C^0([0,1];\Cvec^{2,\alpha}(\overline\Om;\mathbb{R}^2))$ such that $\nabla\cdot\zvec=0$ in 	$\Om \times (0,1)$ and $\zvec\cdot\nvec=(\overline\yvec+\mu\yvec_0)\cdot\nvec$ on $\Gamma\times(0,1)$.
	Then, for any $\nu>0$, we set
$$
	\Svec_\nu := \{\, \zvec\in \Svec': \|\zvec-\overline\yvec\|_{0,2,\alpha}\leq\nu \,\}.
$$

	Let $\nu>0$ be the constant furnished by Lemma~\ref{nu-small} and let us carry out the previous process with $\Svec := \Svec_\nu$.
	To guarantee that $\Svec_\nu$ is nonempty, it suffices to assume that the initial data $\yvec_0$ is
	sufficiently small in $\Cvec^{2,\alpha}(\Ombar;\mathbb{R}^2)$, since, if this is the case, $\overline\yvec+\mu\yvec_0 \in \Svec_\nu$.

	Let $\zvec \in \Svec_\nu$ be given and let us set $\zvec^*=\yvec^* + \pi_2(\zvec-\overline{\yvec})$.
	We have the estimates
\begin{equation}\label{ybarra*}
	\|\zvec^*(\cdot\,,t)\|_{2,\alpha}\leq \|\yvec^*(\cdot\,,t)\|_{2,\alpha}+ C\|(\zvec-\overline\yvec)(\cdot\,,t)\|_{2,\alpha} \quad \forall t\in[0,1]
\end{equation}
	and the following result holds:
	
\begin{lemma}\label{L-X}
	The flux $\Zvec^*$ associated to $\zvec^*$ satisfies $\Zvec^*\in C^1([0,1]\times[0,1];\Cvec^{2,\alpha}(\Ombar_3;\mathbb{R}^2))$.
\end{lemma}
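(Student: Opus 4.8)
The statement asserts $C^1$-in-$(t,s)$ regularity of the flux $\Zvec^*$ with values in $\Cvec^{2,\alpha}(\Ombar_3;\mathbb{R}^2)$. The natural route is the classical analysis of flows generated by a vector field that is $C^0$ in time and $C^{2,\alpha}$ in space, applied to $\zvec^*$. First I would record that $\zvec^* = \yvec^* + \pi_2(\zvec - \overline{\yvec})$ belongs to $C^0([0,1];\Cvec^{2,\alpha}(\Ombar_3;\mathbb{R}^2))$ and is compactly supported in $\Om_3$ (hence can be viewed as a field on all of $\mathbb{R}^2$, vanishing outside $\Om_3$): this is immediate from the smoothness of $\yvec^*$, the continuity properties of $\pi_2$ in Lemma~\ref{L3}, and the hypothesis $\zvec \in \Svec_\nu$ (so $\zvec \in C^0([0,1];\Cvec^{2,\alpha}(\Ombar;\mathbb{R}^2))$). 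In particular \eqref{ybarra*} gives a uniform-in-$t$ bound $\|\zvec^*(\cdot,t)\|_{2,\alpha} \le C(\|\yvec^*\|_{0,2,\alpha} + \nu)=:L$, and since the support is inside $\Om_3$ the flow of $\zvec^*$ preserves $\Ombar_3$, so $\Zvec^*$ is globally defined on $\Ombar_3 \times [0,1] \times [0,1]$ and maps $\Ombar_3$ diffeomorphically onto itself.

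Next I would establish the regularity in stages. Step~1: since $\zvec^*$ is Lipschitz in $\xvec$ uniformly in $t$ (from the $\Cvec^{1}$, let alone $\Cvec^{2,\alpha}$, bound) and continuous in $t$, the Cauchy–Lipschitz theorem with parameters gives existence, uniqueness and joint continuity of $\Zvec^*(\xvec,t,s)$, and $C^1$ dependence on $(t,s)$ for each fixed $\xvec$, with $\Zvec^*_t(\xvec,t,s)=\zvec^*(\Zvec^*(\xvec,t,s),t)$ and $\Zvec^*_s(\xvec,t,s)=-D_\xvec\Zvec^*(\xvec,t,s)\,\zvec^*(\xvec,s)$. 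Step~2: differentiate the integral equation $\Zvec^*(\xvec,t,s)=\xvec+\int_s^t \zvec^*(\Zvec^*(\xvec,\sigma,s),\sigma)\,d\sigma$ in $\xvec$; the Jacobian $J:=D_\xvec\Zvec^*$ solves the linear variational ODE $J_t = (D_\xvec\zvec^*)(\Zvec^*(\xvec,t,s),t)\,J$, $J(\xvec,s,s)=\mathrm{Id}$, whose coefficient is $C^0$ in $t$ and $C^{1,\alpha}$ in $\xvec$ (since $\zvec^* \in \Cvec^{2,\alpha}$), so $J \in C^0([0,1]^2;\Cvec^{1,\alpha}(\Ombar_3))$ and $\Zvec^* \in C^0([0,1]^2;\Cvec^{2}(\Ombar_3))$. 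Step~3: bootstrap the Hölder regularity of the second derivatives — differentiating the $J$-equation once more, $D_\xvec^2\Zvec^*$ satisfies a linear ODE whose inhomogeneity involves $(D^2\zvec^*)\circ\Zvec^*$, which is $C^{0,\alpha}$ in $\xvec$ because $D^2\zvec^*$ is $\alpha$-Hölder and $\Zvec^*(\cdot,t,s)$ is bi-Lipschitz; a Grönwall estimate in the $\Cvec^{0,\alpha}$ norm then yields $D_\xvec^2\Zvec^* \in C^0([0,1]^2;\Cvec^{0,\alpha}(\Ombar_3))$, i.e. $\Zvec^* \in C^0([0,1]^2;\Cvec^{2,\alpha}(\Ombar_3;\mathbb{R}^2))$. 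Step~4: combine with Step~1 to upgrade to $C^1$ in $(t,s)$: from $\Zvec^*_t = \zvec^*\circ\Zvec^*$ and the $C^0([0,1]^2;\Cvec^{2,\alpha})$ regularity just obtained together with $\zvec^* \in C^0_t\Cvec^{2,\alpha}_x$, the map $(t,s)\mapsto \Zvec^*_t(\cdot,t,s)$ is continuous into $\Cvec^{2,\alpha}(\Ombar_3)$ — here one uses that composition $(\uvec,\Psi)\mapsto \uvec\circ\Psi$ is continuous on $\Cvec^{2,\alpha}\times\Cvec^{2,\alpha}$ when $\Psi$ has bounded inverse — and similarly for $\Zvec^*_s = -J\,\zvec^*(\cdot,s)$; hence $\Zvec^* \in C^1([0,1]\times[0,1];\Cvec^{2,\alpha}(\Ombar_3;\mathbb{R}^2))$.

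The only genuinely delicate point is Step~3, the propagation of the $\alpha$-Hölder modulus of the top-order spatial derivatives of the flow: one must check that composing the $\Cvec^{0,\alpha}$ function $D^2\zvec^*$ with the (merely Lipschitz, not $C^1$ with Hölder derivative) diffeomorphism $\Zvec^*(\cdot,t,s)$ still lands in $\Cvec^{0,\alpha}$ with a controlled norm — this is where the uniform bi-Lipschitz bounds on $\Zvec^*$ coming from the Grönwall estimate on $J$ and $J^{-1}$ are essential, and where the exponent $\alpha \in (0,1)$ (rather than a full derivative) is exactly what makes the composition estimate close. This is a standard fact in the Kato/Bardos–Frisch framework (indeed it is the mechanism underlying Lemma~\ref{lemma Bardos1}), so I would either cite \cite{Bardos,Kato} or reproduce the one-line composition inequality $\|f\circ\Psi\|_{0,\alpha}\le \|f\|_{0,\alpha}\max\{1,\mathrm{Lip}(\Psi)^\alpha\}$ and the Grönwall argument. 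Everything else is routine ODE-with-parameters theory applied on the fixed compact set $\Ombar_3$.
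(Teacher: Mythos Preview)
Your proposal is correct and follows exactly the route the paper intends: the paper's proof is the single sentence ``it suffices to apply directly the classical existence, uniqueness and regularity theory of ODEs,'' and your Steps~1--4 are precisely an unpacking of what that classical theory says for a time-continuous, spatially $C^{2,\alpha}$ vector field with compact support in $\Om_3$. In other words, you have supplied the details the paper deliberately omits; there is no divergence in approach.
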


	Recall that $\Zvec^* = \Zvec^*(\mathbb{x},t,s)$ is, by definition, the unique function satisfying
\begin{equation}\label{X}
	\left\{
\begin{array}{ll}
\noalign{\smallskip} \dis \Zvec^*_t(\xvec,t,s)= \zvec^*(\Zvec^*(\xvec,t,s),t),\\
\noalign{\smallskip} \dis  \Zvec^*(\xvec,s,s)=\xvec,
\end{array}
	\right.
\end{equation}
	and
$$
	\Zvec^*(\xvec,t,s)\in \Ombar_3 \quad \forall (\xvec,t,s)\in \Ombar_3\times[0,1]\times[0,1].
$$

	For the proof of Lemma~\ref{L-X}, it suffices to apply directly the classical existence, uniqueness and regularity theory of ODEs.
	
	Since $\Zvec^*\in C^1([0,1]\times[0,1];\Cvec^{2,\alpha}(\Ombar_3;\mathbb{R}^2))$, 
	$\theta_0\in C^{2,\alpha}(\Ombar)$ and~$\pi_1$ maps continuously 
	$C^{2,\alpha}(\Ombar)$ into~$C^{2,\alpha}(\Ombar_3)$, there exists a unique solution 
	$\theta^*\in C^0([0,1/2];C^{2,\alpha}(\Ombar_3))$ to the problem
\begin{equation}\label{p-extL55}
	\left\{
		\begin{array}{lll}
			\noalign{\smallskip}\dis \theta^*_t + \zvec^*\cdot\nabla\theta^* = 0    	 &\hbox{in}&\Om_3 \times (0,1/2),\\
			\noalign{\smallskip} \theta^*(\xvec,0)=\pi_1(\theta_0)(\xvec)		&\hbox{in}& \Om_3.
		\end{array}
	\right.
\end{equation}	

	Note that,  in \eqref{p-extL55}, no boundary condition on $\theta^*$ is needed.
	Obviously, this is because~$\Supp \zvec^* \subset \Om_3$.

	The solution to \eqref{p-extL55} verifies $\Supp \theta^*(\cdot\,,t) \subset \Zvec^*(\Om_2,t,0)$ for all  $t \in [0,1/2]$.
	In particular, in view of the choice of $\nu$, we get:
\[
    \Supp \theta^*(\cdot\,,1/2) \subset \Zvec^*(\Om_2,1/2,0)\subset\Om_3\setminus\Ombar _2,
\]
	whence $\theta^*(\xvec,1/2) = 0$ in $\Om_2$. 

	Let $\theta$ be the following function:
\[
	\theta(\xvec,t)=
\left\{
	\begin{array}{ll}
		\theta^*(\xvec,t), 	&(\xvec,t)\in \Ombar \times [0,1/2),\\
		0,                 		&(\xvec,t)\in \Ombar \times [1/2,1].
	\end{array}
\right.
\]
	Then $\theta\in C^0([0,1];C^{2,\alpha}(\Ombar))$ and one has
\begin{equation}\label{p-theta}
	\left\{
\begin{array}{lll}
	\noalign{\smallskip}\dis \theta_t + \zvec\cdot\nabla\theta = 0     &\hbox{in}&\Om\times(0,1),\\
	\noalign{\smallskip} \theta(\xvec,0)=\theta_0(\xvec)                   &\hbox{in}&\Om.
    \end{array}
	\right.
\end{equation}

	For the construction of $\zeta$, the argument is the following.
	First, let us introduce $\zeta_0^*:=\nabla\times(\pi_2(\yvec_0))$ and let $\zeta^*\in C^0([0,1/2];C^{1,\alpha}(\Ombar_3))$ be 
	the unique solution to the problem   
\[
\left\{
\begin{array}{lll}
	\noalign{\smallskip} \dis
	\zeta^*_t +\zvec^*\cdot \nabla\zeta^* = - \mathbf{k} \times \nabla\theta^*		  		&\hbox{in}& \Om_3 \times (0,1/2), \\
	\noalign{\smallskip} \dis \zeta^*(\xvec,0) = \zeta_0^*(\xvec)                                    			 &\hbox{in}& \Om_3.
\end{array}
\right.
\]
	With this $\zeta^*$, we define $\zeta_{1/2}\in C^{1,\alpha}(\Ombar)$ with
	\[
\zeta_{1/2}(\xvec):=\zeta^*(\xvec,1/2)~\,\hbox{forall}~\,\xvec \in \Ombar.
	\]
	Then, let $\zeta^{**}\in C^0([1/2,1]; C^{1,\alpha}(\Ombar_3))$ be the unique solution to the problem
\[
\left\{
\begin{array}{lll}
	\noalign{\smallskip} \dis \zeta^{**}_t +\zvec^*\cdot \nabla\zeta^{**} = 0               	&\hbox{in}& \Om_3 \times (1/2,1), \\
	\noalign{\smallskip} \dis \zeta^{**}(\xvec,1/2) = \pi_1(\zeta_{1/2})(\xvec)      	&\hbox{in}& \Om_3.
\end{array}
\right.
\]
	We have $\dis \zeta^{**}(\Zvec^*(\xvec,t,1/2),t) = \pi_1(\zeta_{1/2})(\xvec)$ for all $(\xvec,t) \in  \Ombar _3\times[1/2,1]$ 
	and, again from the choice of~$\nu$,
\[
    \Supp \zeta^{**}(\cdot\,,1) \subset \Zvec^*(\Om_2,1,1/2)\subset\Om_3 \setminus \Ombar _2
\]
	and $\zeta^{**}(\mathbf{x},1)\equiv0$ in $\Om_2$.
	Finally, we can define $\zeta\in C^0([0,1]; C^{1,\alpha}(\Ombar))$, with
\[
	\zeta(\xvec,t)=
\left\{
\begin{array}{ll}
	\zeta^*   (\xvec,t),        	&(\xvec,t)\in \Om \times (0,1/2),\\
	\zeta^{**}(\xvec,t),        	&(\xvec,t)\in \Om \times [1/2,1).
\end{array}
\right.
\]

	 Obviously, $\zeta$ is a solution to the initial-value problem
\begin{equation}\label{p-zeta}
\left\{
\begin{array}{lll}
	\noalign{\smallskip}\dis\zeta_t + \zvec\cdot\nabla\zeta = -\mathbf{k}\times\nabla \theta &\hbox{in}& \Om \times (0,1), \\
	\noalign{\smallskip} \dis \zeta(\xvec,0) = (\nabla\times\yvec_0)(\xvec)                                     &\hbox{in}& \Om.
\end{array}
\right.
\end{equation}
	
	With this $\zeta$, we can now get a unique $\yvec\in C^0([0,1];\Cvec^{2,\alpha}(\Ombar;\mathbb{R}^2))$ such that 
	$\nabla\times\yvec = \zeta$ in~$\Om \times (0,1)$, $\nabla\cdot\yvec=0$ in $\Om \times (0,1)$ and~$\yvec\cdot\nvec = (\overline\yvec+\mu\yvec_0)\cdot\nvec$ on $\Gamma\times[0,1]$.
	Indeed, let $\psi\in C^0([0,1]; C^{3,\alpha}(\Ombar))$ be the unique solution to the following family of elliptic equations:
\begin{equation}\label{Phi}
	\left\{
\begin{array}{lll}
	\noalign{\smallskip}
	\dis -\Delta \psi = \zeta-\mu\nabla\times\yvec_0 	&~\hbox{in}~\Om\times(0,1),   \\
	\noalign{\smallskip}
	\dis         \psi = 0                              				&~\hbox{on}~\Gamma\times(0,1).
\end{array}
	\right.
\end{equation}
	Then, let us set $\yvec:=\nabla\times\psi+\overline\yvec+\mu\yvec_0$.
	We have that $\yvec \in C^0([0,1]; \Cvec^{2,\alpha}(\Ombar;\mathbb{R}^2))$ and satisfies the required properties.
		Since $\yvec$ is determined by $\zvec$, we write $\yvec = F(\zvec)$.
	Accordingly, $F: \Svec_\nu \mapsto \Svec'$ is well defined.

	The following result holds:
	
\begin{lemma}\label{Sr}
	There exists $\delta>0$ such that, if
	\begin{equation}\label{47p}
\max\left\{\|\yvec_0\|_{2,\alpha},\|\theta_0\|_{2,\alpha}\right\} \leq \delta,
	\end{equation}
then $ F(\Svec_\nu)\subset \Svec_\nu$.
\end{lemma}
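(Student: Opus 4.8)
The plan is to track the map $F$ through its three successive stages --- construction of $\theta$, then $\zeta$, then $\yvec=F(\zvec)$ --- obtaining at each step an estimate of the relevant $C^{0,j,\alpha}$-norm in terms of $\nu$ and $\max\{\|\yvec_0\|_{2,\alpha},\|\theta_0\|_{2,\alpha}\}$, and then to close the argument by choosing $\delta$ small enough. I would first record the a priori bound $\|\zvec^*\|_{0,2,\alpha}\le \|\yvec^*\|_{0,2,\alpha}+C\nu$ coming from~\eqref{ybarra*} and the continuity of $\pi_2$; note that $\|\yvec^*\|_{0,2,\alpha}$ is a fixed constant depending only on $M$, $\varphi$, $\gamma$ and the geometry.

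Next I would estimate $\theta^*$. Applying Lemma~\ref{lemma Bardos} to the transport equation~\eqref{p-extL55} with $m=2$, $g\equiv0$ and transporting field $\zvec^*$ gives
\[
\|\theta^*\|_{0,2,\alpha}\le \|\pi_1(\theta_0)\|_{2,\alpha}\exp\!\Big(K\!\int_0^{1/2}\|\zvec^*(\cdot\,,t)\|_{2,\alpha}\,dt\Big)\le C\,\|\theta_0\|_{2,\alpha}\,e^{C(1+\nu)},
\]
and since $\theta$ coincides with $\theta^*$ on $[0,1/2]$ and vanishes afterwards, $\|\theta\|_{0,2,\alpha}\le C\|\theta_0\|_{2,\alpha}e^{C(1+\nu)}$. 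In particular $\|\mathbf{k}\times\nabla\theta^*\|_{0,1,\alpha}\le C|\mathbf{k}|\,\|\theta_0\|_{2,\alpha}e^{C(1+\nu)}$. Then I would estimate $\zeta$ in two pieces: for $\zeta^*$ on $[0,1/2]$, Lemma~\ref{lemma Bardos} with $m=1$ and source $-\mathbf{k}\times\nabla\theta^*$ yields
\[
\|\zeta^*\|_{0,1,\alpha}\le\Big(C|\mathbf{k}|\,\|\theta_0\|_{2,\alpha}e^{C(1+\nu)}+\|\nabla\times\pi_2(\yvec_0)\|_{1,\alpha}\Big)e^{C(1+\nu)}\le C\big(\|\yvec_0\|_{2,\alpha}+\|\theta_0\|_{2,\alpha}\big)e^{C(1+\nu)};
\]
for $\zeta^{**}$ on $[1/2,1]$ the source is zero, so $\|\zeta^{**}\|_{0,1,\alpha}\le \|\pi_1(\zeta_{1/2})\|_{1,\alpha}e^{C(1+\nu)}\le C\|\zeta^*\|_{0,1,\alpha}e^{C(1+\nu)}$, and combining, $\|\zeta\|_{0,1,\alpha}\le C\big(\|\yvec_0\|_{2,\alpha}+\|\theta_0\|_{2,\alpha}\big)e^{C(1+\nu)}$.

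Finally I would control $\yvec-\overline\yvec=\nabla\times\psi+\mu\yvec_0$. Elliptic regularity for~\eqref{Phi} gives $\|\psi\|_{0,3,\alpha}\le C\|\zeta-\mu\nabla\times\yvec_0\|_{0,1,\alpha}\le C\big(\|\zeta\|_{0,1,\alpha}+\|\yvec_0\|_{2,\alpha}\big)$, hence
\[
\|\yvec-\overline\yvec\|_{0,2,\alpha}\le \|\nabla\times\psi\|_{0,2,\alpha}+\|\mu\yvec_0\|_{0,2,\alpha}\le C\big(\|\yvec_0\|_{2,\alpha}+\|\theta_0\|_{2,\alpha}\big)e^{C(1+\nu)}.
\]
Since $\nu$ is already fixed (by Lemma~\ref{nu-small}), the quantity $C e^{C(1+\nu)}$ is a fixed constant; choosing $\delta$ so small that $C e^{C(1+\nu)}\cdot 2\delta\le\nu$ forces $\|F(\zvec)-\overline\yvec\|_{0,2,\alpha}\le\nu$ whenever~\eqref{47p} holds. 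Together with $\nabla\cdot F(\zvec)=0$ and the correct boundary trace (built into the definition $\yvec=\nabla\times\psi+\overline\yvec+\mu\yvec_0$ with $\psi|_\Gamma=0$), this shows $F(\Svec_\nu)\subset\Svec_\nu$. The one point demanding care --- the ``main obstacle'' --- is that the bound on $\zeta$ (and hence on $\yvec$) must be \emph{linear} in the data: this works only because the source term $-\mathbf{k}\times\nabla\theta$ in the vorticity equation is itself linear in $\theta_0$, so no product of large quantities appears and the exponential factors depend only on the \emph{fixed} field $\zvec^*$ through $\nu$, not on the unknowns. I would also double-check that $\pi_1$ indeed maps $C^{1,\alpha}(\Ombar)$ into $C^{1,\alpha}(\Ombar_3)$ so that $\pi_1(\zeta_{1/2})$ is legitimate in the $C^{1,\alpha}$-scale, which is guaranteed by Lemma~\ref{L3}.
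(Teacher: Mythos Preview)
Your proof is correct and follows essentially the same approach as the paper: apply Lemma~\ref{lemma Bardos} to the transport equations for $\theta^*$, $\zeta^*$ and $\zeta^{**}$ to obtain bounds linear in $(\|\yvec_0\|_{2,\alpha}+\|\theta_0\|_{2,\alpha})$ with exponential factors depending only on $\|\zvec^*\|_{0,2,\alpha}\le \|\yvec^*\|_{0,2,\alpha}+C\nu$, then use elliptic regularity for~\eqref{Phi} and choose $\delta$ so that $2C_1\delta e^{C_2\nu}\le\nu$. The paper's write-up is slightly more compressed (it absorbs $\|\yvec^*\|_{0,2,\alpha}$ into the constant $C_1$ rather than writing $e^{C(1+\nu)}$), but the structure and estimates are identical.
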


\begin{proof}
	Let $\zvec\in \Svec_\nu$ be given.
	Then $F(\zvec)-\overline \yvec=\nabla\times \psi+\mu\yvec_0$ and we have:
\[
	\|F(\zvec)(\cdot\,,t)-\overline \yvec(\cdot\,,t)\|_{2,\alpha}\leq  C(\|\zeta(\cdot\,,t)\|_{1,\alpha} + \|\yvec_0\|_{2,\alpha}).
\]
	Applying Lemma \ref{lemma Bardos} to the equations of $\theta^*$ and $\zeta^*$, we get
\begin{equation}\label{theta*}
	\|\theta^*(\cdot\,,t)\|_{2,\alpha}\leq \|\pi_1(\theta_0)\|_{2,\alpha}\exp\left({\dis K\int_0^t\|\zvec^*(\cdot\,,\tau)\|_{2,\alpha}\,d\tau}\right)
\end{equation}
and
\begin{equation}\label{estzeta}
	\|\zeta^*(\cdot\,,t)\|_{1,\alpha}\leq C(\|\pi_2(\yvec_0)\|_{2,\alpha}+\|\pi_1(\theta_0)\|_{2,\alpha})
	\exp\bigg(\dis K\int_0^t\|\zvec^*(\cdot\,,\tau)\|_{2,\alpha}\,d\tau\bigg).
\end{equation}
	With similar arguments, we also obtain 
\begin{equation}\label{zeta**}
	\|\zeta^{**}(\cdot\,,t)\|_{1,\alpha}\leq  C(\|\pi_2(\yvec_0)\|_{2,\alpha}+\|\pi_1(\theta_0)\|_{2,\alpha})
	\exp\bigg(\dis K\int_0^t\|\zvec^*(\cdot\,,\tau)\|_{2,\alpha}(\tau)d\tau\bigg)
\end{equation}
	for all $t\in[1/2,1]$.
	Thanks to \eqref{estzeta} and \eqref{zeta**}, we get the following for $\zeta$:
\begin{equation}\label{zeta}
	\|\zeta(\cdot\,,t)\|_{1,\alpha}\leq C(\|\yvec_0\|_{2,\alpha}+\|\theta_0\|_{2,\alpha})\exp\bigg(\dis K\int_0^t\|\zvec^*(\cdot,\tau)\|_{2,\alpha}d\tau\bigg).
\end{equation}

 	Using \eqref{zeta}, \eqref{ybarra*} and the definition of $\Svec_\nu$, we see that
\[
\begin{alignedat}{2}
	\|F(\zvec)(\cdot\,,t)-\overline \yvec(\cdot\,,t)\|_{2,\alpha}\leq&~ C_1(\|\yvec_0\|_{2,\alpha}+\|\theta_0\|_{2,\alpha})
	\exp\left(C_2\int_0^t\|\zvec(\cdot\,,\tau)-\overline\yvec(\cdot\,,\tau)\|_{2,\alpha}\,d\tau\right)\\
	\leq&~C_1(\|\yvec_0\|_{2,\alpha}+\|\theta_0\|_{2,\alpha})\exp(C_2\nu).
\end{alignedat}
\]

	Let $\delta>0$ be such that $2 C_1 \delta e^{C_2 \nu} \leq \nu$ and let us assume that~\eqref{47p} is satisfied.
	Then
\[
	\|F(\zvec)-\overline\yvec\|_{0,2,\alpha}\leq \nu
\]
	and, consequently, $F$ maps $\Svec_\nu$ into itself.
\end{proof}

\

	Now, we will prove the existence and uniqueness of a fixed-point of the extension of $F$ in the closure of $\Svec_\nu$ in 
	$C^0([0,1];\Cvec^{1,\alpha}(\Ombar;\mathbb{R}^3))$. 
	For this purpose, we will check that $F$ satisfies the hypotheses of Theorem \ref{fixedpoint}.

	To this end, we will first establish two important lemmas.
	The first one is the following:
	
\begin{lemma}\label{L-X-est}
	There exists $\widetilde C>0$, only depending on $\|\yvec_0\|_{2,\alpha}$, $\|\theta_0\|_{2,\alpha}$ and $\nu$, such that, for
	any $\zvec^1,~\zvec^2\in \Svec_\nu$, one has:
\begin{equation}\label{eq-fix-p}
	\|(\zeta^1-\zeta^2)(\cdot,t)\|_{0,\alpha}\leq\widetilde C \int_0^t\|(\zvec^1-\zvec^2)(\cdot,s)\|_{1,\alpha} \,ds
	\quad \forall t \in [0,1],
\end{equation}
	where $\zeta^i$ is the vorticity associated to $\zvec^i$.
\end{lemma}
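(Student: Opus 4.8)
The plan is to track the difference $\zeta^1 - \zeta^2$ through the two-stage construction ($\zeta^*$ on $[0,1/2]$ and $\zeta^{**}$ on $[1/2,1]$) that defines the vorticity associated to a given field. The key observation is that each stage is governed by a transport equation of the form $\partial_t u + \zvec^{i,*}\cdot\nabla u = g^i$ with a source $g^i$ that depends on $\theta^{i,*}$, so the estimate $\eqref{eq-fix-p}$ should follow by subtracting the two equations, applying Lemma~\ref{lemma Bardos} (or rather Lemma~\ref{lemma Bardos1} at the level $m=0$) to the difference, and Gronwall-type control of the resulting integral inequality. I will need, as auxiliary estimates: (i) a Lipschitz bound $\|\theta^{1,*}(\cdot,t) - \theta^{2,*}(\cdot,t)\|_{1,\alpha} \leq C\int_0^t \|(\zvec^1 - \zvec^2)(\cdot,s)\|_{1,\alpha}\,ds$ for the associated temperatures, proved the same way from their transport equations; (ii) the continuity of the extension operators $\pi_1,\pi_2$ from Lemma~\ref{L3}; (iii) the uniform bounds on $\|\zvec^{i,*}\|_{0,2,\alpha}$ that hold because $\zvec^i \in \Svec_\nu$, via $\eqref{ybarra*}$.

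The detailed steps: \emph{Step 1.} For $i=1,2$ write $w := \theta^{1,*} - \theta^{2,*}$; it satisfies $w_t + \zvec^{1,*}\cdot\nabla w = -(\zvec^{1,*}-\zvec^{2,*})\cdot\nabla\theta^{2,*}$ with $w(\cdot,0)=0$. Since $\|\theta^{2,*}\|_{0,2,\alpha}$ is bounded by a constant depending only on $\|\theta_0\|_{2,\alpha}$ and $\nu$ (by $\eqref{theta*}$), and $\|\zvec^{1,*}-\zvec^{2,*}\|_{1,\alpha} \leq C\|\zvec^1-\zvec^2\|_{1,\alpha}$ by continuity of $\pi_2$, Lemma~\ref{lemma Bardos} applied at regularity level $1,\alpha$ gives
\[
\|w(\cdot,t)\|_{1,\alpha} \leq C\int_0^t \|(\zvec^1-\zvec^2)(\cdot,s)\|_{1,\alpha}\,ds.
\]
\emph{Step 2.} On $[0,1/2]$, set $\xi := \zeta^{1,*} - \zeta^{2,*}$. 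Subtracting the equations for $\zeta^{1,*}$ and $\zeta^{2,*}$ yields
\[
\xi_t + \zvec^{1,*}\cdot\nabla\xi = -(\zvec^{1,*}-\zvec^{2,*})\cdot\nabla\zeta^{2,*} - \mathbf{k}\times\nabla(\theta^{1,*}-\theta^{2,*}),
\]
with $\xi(\cdot,0)=0$ (the initial vorticities coincide, both equal $\nabla\times\pi_2(\yvec_0)$). Apply Lemma~\ref{lemma Bardos1} with $m=0$: the right-hand side is bounded in $\|\cdot\|_{0,\alpha}$ by $C\|\zvec^1-\zvec^2\|_{1,\alpha}\,\|\zeta^{2,*}\|_{1,\alpha} + C\|w\|_{1,\alpha}$, and $\|\zeta^{2,*}\|_{1,\alpha}$ is uniformly bounded by $\eqref{estzeta}$; combined with Step 1 and Gronwall (absorbing the $\|\nabla\zvec^{1,*}\|_{0,\alpha}\|\xi\|_{0,\alpha}$ term, whose coefficient is uniformly bounded), this gives $\eqref{eq-fix-p}$ for $t\in[0,1/2]$. \emph{Step 3.} On $[1/2,1]$ repeat with $\zeta^{1,**}-\zeta^{2,**}$, whose transport equation has zero source but nonzero initial datum $\pi_1(\zeta^{1,*}(\cdot,1/2)) - \pi_1(\zeta^{1,*}(\cdot,1/2))$ — wait, more precisely $\pi_1(\zeta_{1/2}^1 - \zeta_{1/2}^2)$, whose $\|\cdot\|_{0,\alpha}$-norm is controlled by Step 2 at $t=1/2$ via continuity of $\pi_1$. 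Lemma~\ref{lemma Bardos1} with $m=0$ and Gronwall then propagate the bound to all of $[1/2,1]$. Gluing, and relabeling the final constant as $\widetilde C$ (which depends only on $\|\yvec_0\|_{2,\alpha}$, $\|\theta_0\|_{2,\alpha}$ and $\nu$ through the uniform bounds used), yields the claim.

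The main obstacle I anticipate is bookkeeping the \emph{regularity levels} carefully: the vorticity difference is only estimated in $C^{0,\alpha}$, so I must use the $m=0$ case of Lemma~\ref{lemma Bardos1}, which requires a bound on $\|\nabla\zvec^{1,*}\|_{0,\alpha}$ — available since $\zvec^1\in\Svec_\nu \subset C^0([0,1];\Cvec^{2,\alpha})$ and $\pi_2$ preserves $C^{2,\alpha}$ — while the temperature difference must be controlled one derivative higher (in $C^{1,\alpha}$) because it enters the vorticity equation through $\nabla\theta$. Ensuring all the "background" quantities ($\zeta^{2,*}$, $\theta^{2,*}$, $\zvec^{i,*}$) have norms bounded by constants depending only on the allowed data and $\nu$ — not on the particular $\zvec^i$ — is what makes $\widetilde C$ uniform, and this is the point that must be checked with care rather than asserted.
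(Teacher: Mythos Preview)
Your proposal is correct and follows essentially the same route as the paper's proof: estimate the temperature difference in $C^{1,\alpha}$ via Lemma~\ref{lemma Bardos1}, then the vorticity difference in $C^{0,\alpha}$ on $[0,1/2]$ and $[1/2,1]$ separately, using Gronwall and the uniform bounds \eqref{ybarra*}, \eqref{theta*}, \eqref{estzeta}, \eqref{zeta**}. One slip to fix in Step~3: the difference $\zeta^{1,**}-\zeta^{2,**}$ does \emph{not} satisfy a sourceless transport equation, since the two $\zeta^{i,**}$ are transported by \emph{different} fields $\zvec^{i,*}$; subtracting produces exactly the same kind of source you handled in Step~2, namely $-(\zvec^{*,1}-\zvec^{*,2})\cdot\nabla\zeta^{**,2}$, and with this term included (and bounded via \eqref{zeta**}) the Gronwall argument goes through as you describe.
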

\begin{proof}
	First of all, let us introduce $\wvec^*:=\zvec^{*,1}-\zvec^{*,2}$ and $\Theta^* := \theta^{*,1}-\theta^{*,2}$
	(where the notation is self-explaining). Obviously, the estimates \eqref{ybarra*} and (resp. \eqref{theta*} and 
	\eqref{estzeta}) hold for $\zvec^{*,1}$ and $\zvec^{*,2}$ (resp. $\theta^{*,1}$ and $\theta^{*,2}$ and $\zeta^{*,1}$ 
	and $\zeta^{*,2}$). Furthermore, it is clear that
\[
	\Theta^*_t+\zvec^{*,1}\cdot\nabla\Theta^*=-\wvec^*\cdot\nabla\theta^{*,2}.
\]
	
	Applying Lemma~\ref{lemma Bardos1} to this equation, we have
\begin{equation}\label{Theta^*}
\begin{alignedat}{2}
	\frac{d}{dt^+}\|\Theta^*(\cdot,t)\|_{1,\alpha}\leq
	& ~\|\wvec^*(\cdot,t)\|_{1,\alpha}\|\theta^{*,2}(\cdot,t)\|_{2,\alpha}
	+K\|\zvec^{*,1}(\cdot,t)\|_{1,\alpha}\|\Theta^*(\cdot,t)\|_{1,\alpha}.
\end{alignedat}
\end{equation}

		In view of Gronwall's Lemma,  \eqref{ybarra*} and \eqref{theta*}, we see that
\begin{equation}\label{Theta^*est}
\begin{alignedat}{2}
	\|\Theta^*(\cdot,t)\|_{1,\alpha}\leq& ~\widetilde C_0\int_0^t\|\wvec^*(\cdot,s)\|_{1,\alpha}\,ds\quad \forall t \in [0,1/2].
\end{alignedat}
\end{equation}

	The equations verified by $\Upsilon^*:=\zeta^{*,1}-\zeta^{*,2}$ and $\Upsilon^{**}:=\zeta^{**,1}-\zeta^{**,2}$ are
\[
	\Upsilon^*_t+\zvec^{*,1}\cdot\nabla\Upsilon^*=-\wvec^*\cdot\nabla\zeta^{*,2}-\mathbf{k} \times\nabla\Theta^*
\]
	and
\[
	\Upsilon^{**}_t+\zvec^{*,1}\cdot\nabla\Upsilon^{**}=-\wvec^{*}\cdot\nabla\zeta^{**,2},
\]
	respectively.
	Consequently, applying Lemma~\ref{lemma Bardos1} to these equations, we get:
\begin{equation}\label{est-Psi*}
	\begin{alignedat}{2}
		\frac{d}{dt^+}\|\Upsilon^*(\cdot,t)\|_{0,\alpha}\leq
	     	&~~\|(\wvec^*\cdot\nabla\zeta^{*,2}+\mathbf{k} \times\nabla\Theta^*)(\cdot,t)\|_{0,\alpha}
	     	+K \|\zvec^{*,1}(\cdot,t)\|_{1,\alpha}\|\Upsilon^*(\cdot,t)\|_{0,\alpha}
	\end{alignedat}
\end{equation}
	and
\begin{equation}\label{est-Psi**}
	\begin{alignedat}{2}
		\frac{d}{dt^+}\|\Upsilon^{**}(\cdot,t)\|_{0,\alpha}\leq
		& ~ \|(\wvec^*\cdot\nabla\zeta^{**,2})(\cdot,t)\|_{0,\alpha}
	         +K \|\zvec^{*,1}(\cdot,t)\|_{1,\alpha}\|\Upsilon^{**}(\cdot,t)\|_{0,\alpha}.
	\end{alignedat}
\end{equation}

	Applying Gronwall's Lemma, we deduce in view of \eqref{Theta^*est} that
\[
\begin{alignedat}{2}
	\|\Upsilon^*(\cdot,t)\|_{0,\alpha}\leq
	& ~\widetilde C_1\|\zeta^{*,2}\|_{0,1,\alpha}\int_0^t\|\wvec^*(\cdot,s)\|_{1,\alpha}\,ds \quad \forall t \in [0,1/2]	
\end{alignedat}
\]
	and
\[
	\|\Upsilon^{**}(\cdot,t)\|_{0,\alpha} \leq~\widetilde C_2\|\zeta^{*,2}\|_{0,1,\alpha}
	\int_0^t\|\wvec^*(\cdot,s)\|_{1,\alpha}\,ds \quad \forall t \in [1/2,1].
\]

	Finally, we see from these estimates and \eqref{zeta} that \eqref{eq-fix-p} holds.
\end{proof}

\	
	
	Note that $\yvec^1-\yvec^2=\nabla\times(\psi^1-\psi^2)$, whence $\nabla\times(\nabla\times(\psi^1-\psi^2))=\zeta^1-\zeta^2$ 
	and~$\nabla\times(\psi^1-\psi^2)\cdot\nvec=0$ on $\Gamma \times [0,1]$. 
	
	Let us denote by $\Mvec$ the set of fields 
	$\wvec\in C^0([0,1];\Cvec^{1,\alpha}(\overline\Om;\mathbb{R}^2))$ such that $\nabla\cdot\wvec=0$ in $\Om \times (0,1)$ 
	and $\wvec\cdot\nvec=0$ on $\Gamma\times(0,1)$. Note that, for any $\wvec \in \Mvec$, 
	the norms $\| \wvec \|_{1,\alpha}$ and~$\| \nabla \times\wvec \|_{0,\alpha}$ are equivalent;
	we will set in the sequel $|||\wvec|||_{1,\alpha} := \|\nabla \times\wvec\|_{0,\alpha}$ for any $\wvec \in \Mvec$.
	
\begin{lemma}\label{iteracion}
	Let $\tilde C $ be the constant furnished by Lemma~$\ref{L-X-est}$.
	For any $\zvec^1, \zvec^2\in \Svec_\nu$, one has
\begin{equation}\label{Fm}
	|||(F^m(\zvec^1)-F^m(\zvec^2))(\cdot,t)|||_{1,\alpha} \leq \frac{(\tilde C t)^m}{m!}
	\|\zvec^1-\zvec^2\|_{0,1,\alpha}  \quad \forall m \geq 1.
\end{equation}
\end{lemma}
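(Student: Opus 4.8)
The plan is to prove \eqref{Fm} by induction on $m$, using Lemma~\ref{L-X-est} as the base case and the linear structure of the vorticity-transport equation for the inductive step. For $m=1$, note that $F(\zvec^1)-F(\zvec^2) = \nabla\times(\psi^1-\psi^2)$ is a divergence-free field vanishing in the normal direction on $\Gamma$, so it belongs to $\Mvec$, and $|||F(\zvec^1)-F(\zvec^2)|||_{1,\alpha} = \|\nabla\times(F(\zvec^1)-F(\zvec^2))\|_{0,\alpha} = \|\zeta^1-\zeta^2\|_{0,\alpha}$. Then \eqref{eq-fix-p} gives immediately $|||(F(\zvec^1)-F(\zvec^2))(\cdot,t)|||_{1,\alpha} \leq \tilde C\int_0^t\|(\zvec^1-\zvec^2)(\cdot,s)\|_{1,\alpha}\,ds \leq \tilde C t\,\|\zvec^1-\zvec^2\|_{0,1,\alpha}$, which is \eqref{Fm} with $m=1$.

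For the inductive step, suppose \eqref{Fm} holds for some $m\geq 1$. Apply the $m=1$ inequality (that is, \eqref{eq-fix-p}) with $\zvec^1,\zvec^2$ replaced by $F^m(\zvec^1), F^m(\zvec^2)$: writing $\eta^i$ for the vorticity associated to $F^m(\zvec^i)$, one has $\eta^i = \nabla\times F^{m+1}(\zvec^i)$ by construction of $F$, hence
\[
|||(F^{m+1}(\zvec^1)-F^{m+1}(\zvec^2))(\cdot,t)|||_{1,\alpha} = \|(\eta^1-\eta^2)(\cdot,t)\|_{0,\alpha} \leq \tilde C\int_0^t \|(F^m(\zvec^1)-F^m(\zvec^2))(\cdot,s)\|_{1,\alpha}\,ds.
\]
The crucial point is that on $\Mvec$ the norm $\|\cdot\|_{1,\alpha}$ is equivalent to $|||\cdot|||_{1,\alpha}$, but in fact one must be slightly careful: Lemma~\ref{L-X-est} is stated with $\|\cdot\|_{1,\alpha}$ on the right-hand side, and $F^m(\zvec^1)-F^m(\zvec^2)\in\Mvec$, so we may bound $\|(F^m(\zvec^1)-F^m(\zvec^2))(\cdot,s)\|_{1,\alpha}$ by $|||(F^m(\zvec^1)-F^m(\zvec^2))(\cdot,s)|||_{1,\alpha}$ up to the equivalence constant. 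To get the clean factorial bound without accumulating equivalence constants, the right approach is to re-examine the proof of Lemma~\ref{L-X-est}: its right-hand side really only involves $\wvec^* = \zvec^{*,1}-\zvec^{*,2}$ through $\|\wvec^*(\cdot,s)\|_{1,\alpha}$, and $\pi_2$ is linear and bounded, so the constant $\tilde C$ already absorbs whatever is needed; alternatively one observes that $\zvec^1-\zvec^2\in\Mvec$ as well (both have the same normal trace and are divergence-free), so $\|\zvec^1-\zvec^2\|_{1,\alpha}$ and $|||\zvec^1-\zvec^2|||_{1,\alpha}$ are comparable and the statement \eqref{Fm} is internally consistent.

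Granting this, substitute the inductive hypothesis into the integral:
\[
|||(F^{m+1}(\zvec^1)-F^{m+1}(\zvec^2))(\cdot,t)|||_{1,\alpha} \leq \tilde C\int_0^t \frac{(\tilde C s)^m}{m!}\|\zvec^1-\zvec^2\|_{0,1,\alpha}\,ds = \frac{(\tilde C t)^{m+1}}{(m+1)!}\|\zvec^1-\zvec^2\|_{0,1,\alpha},
\]
which closes the induction. The main obstacle, and the only genuinely delicate point, is matching the norm in which Lemma~\ref{L-X-est} is phrased ($\|\cdot\|_{1,\alpha}$) with the norm $|||\cdot|||_{1,\alpha}$ appearing in \eqref{Fm}; this is resolved by noting that all the relevant differences $F^j(\zvec^1)-F^j(\zvec^2)$ and $\zvec^1-\zvec^2$ lie in $\Mvec$, on which the two norms are equivalent, and by tracking that the constant $\tilde C$ from Lemma~\ref{L-X-est} can be taken to incorporate this equivalence. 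Everything else is a routine Gronwall-type iteration, and taking $t=1$ in \eqref{Fm} yields $|||F^m(\zvec^1)-F^m(\zvec^2)|||_{0,1,\alpha}\leq \frac{\tilde C^m}{m!}\|\zvec^1-\zvec^2\|_{0,1,\alpha}$, so for $m$ large the prefactor is $<1$ and Theorem~\ref{fixedpoint} applies.
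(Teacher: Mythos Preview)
Your proof is correct and follows essentially the same route as the paper: induction on $m$, with Lemma~\ref{L-X-est} supplying both the base case and the inductive step, and the integral $\int_0^t (\tilde C s)^m/m!\,ds$ producing the factorial. The paper's own argument in fact elides the norm-mismatch you flag (it applies the inductive hypothesis, which bounds $|||\cdot|||_{1,\alpha}$, directly inside an integral involving $\|\cdot\|_{1,\alpha}$), and only afterwards introduces an extra constant $\widehat C$ to pass from $|||\cdot|||_{1,\alpha}$ back to $\|\cdot\|_{1,\alpha}$; your discussion of the equivalence on $\Mvec$ is therefore more careful than the original.
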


\begin{proof}
	The proof is by induction.
	
	For $m=1$, this is obvious, in view of Lemma~\ref{L-X-est}.
	
	Let us assume that \eqref{Fm} holds for $m=k$.
	Applying Lemma~\ref{L-X-est} to $\yvec^1=F^{k}(\zvec^1)$ and $\yvec^2=F^{k}(\zvec^2)$, we have
\[
	|||(F(\yvec^1)-F(\yvec^2))(\cdot,t)|||_{1,\alpha} \leq \tilde C \int_0^t\|(\yvec^1-\yvec^2)(\cdot,s)\|_{1,\alpha}\,ds \quad \forall t \in [0,1].
\]
	Therefore, using the induction hypothesis, we obtain:
\[
	\begin{alignedat}{2} \dis
		|||(F^{k+1}(\zvec^1)-F^{k+1}(\zvec^2))(\cdot,t)|||_{1,\alpha}&\leq \tilde C 
		\|\zvec^1-\zvec^2\|_{0,1,\alpha} \int_0^t\frac{(\tilde C s)^k}{k!} \, ds				\\ 
		\noalign{\smallskip} \dis &= \frac{(\tilde C t)^{k+1}}{(k+1)!} 
		\|\zvec^1-\zvec^2\|_{0,1,\alpha}
	\end{alignedat}
\]
	
	This ends the proof.
\end{proof}

\

	We deduce that, for some $\widehat C > 0$, any $m \geq 1$ and any $\zvec^1, \zvec^2 \in \Svec_\nu$, one has
\[
	 \max_{t\in[0,1]}\|(F^m(\zvec^1)-F^m(\zvec^2))(\cdot,t)\|_{1,\alpha} \leq \frac{\widehat C\tilde C^m}{m!}
	\left( \max_{\tau\in[0,1]} \|(\zvec^1-\zvec^2)(\cdot,\tau)\|_{1,\alpha} \right).
\]
	Consequently, if $m$ is large enough, $F^m: \Svec_\nu \mapsto \Svec_\nu$ is a contraction, that is, there exists $\gamma \in (0,1)$ such that
\begin{equation}\label{FFm}
	\| F^m(\zvec^1) - F^m(\zvec^2) \|_{0,1,\alpha} \leq \gamma \|\zvec^1-\zvec^2\|_{0,1,\alpha} \ \ \forall \zvec^1, \zvec^2 \in \Svec_\nu.
\end{equation}

	Thus, we can apply Theorem~\ref{fixedpoint} with
\[
	B_1 = C^0([0,1];\Cvec^{1,\alpha}(\Ombar;\mathbb{R}^2)),\,\, B_2 = C^0([0,1];\Cvec^{2,\alpha}(\Ombar;\mathbb{R}^2)),
	\,\, B = \Svec_\nu \,\, \text{ and } \,\, G = F
	\]
	and deduce that $F$ possesses a unique extension $\widetilde F$ with a unique fixed-point $\yvec$ in the closure of 
	$\Svec_\nu$ in~$C^0([0,1];\Cvec^{1,\alpha}(\Ombar;\mathbb{R}^2))$.
	
	It is easy to check that $\yvec$ is, together with some $\zeta$ and~$\theta$, a solution to~\eqref{p-ext1} 
	satisfying~\eqref{reg} and~\eqref{27p}.

	This ends the proof.

%%%%%%%%%%%%%%%%%%%%%%%%%%%%%%%%%%%%%%%%%%
%%%% SECTION 5
%%%%%%%%%%%%%%%%%%%%%%%%%%%%%%%%%%%%%%%%%%

\section{Proof of Proposition~\ref{P-INV-BOUS}. The 3D case}\label{Sec5}

	In this Section we are going to prove Proposition~\ref{P-INV-BOUS} in the three-dimensional case.
	
	The situation is nor exactly the same considered in the previous section, since the vorticity
	(which was fundamental for the fixed-point argument) is now a field and not a scalar variable.
	
	Let he times $t_i$, the balls $B^i$ and the functions $\rho^i$ be as in~Section~\ref{sec-3-traj} and let us set  $\omvec_0=\nabla\times \pi_3(\yvec_0)$.
	Proposition~\ref{P-INV-BOUS} is a consequence of the following result:	
\begin{propo}\label{P2}
	There exists $\delta>0$ such that, if $\max\left\{\|\yvec_0\|_{2,\alpha},\|\theta_0\|_{2,\alpha}\right\} \leq  \delta$,
	then the coupled system
\begin{equation}\label{p-ext1dim3}
	\left\{
		\begin{array}{lll}
    			\noalign{\smallskip} \dis \omvec_t +( \yvec\cdot \nabla)\omvec= (\omvec\cdot\nabla)\yvec
    								- \mathbf{k} \times \nabla \theta     						&\text{in}&  \Om \times (0,1), \\
    			\noalign{\smallskip} \dis \theta_t + \yvec\cdot \nabla\theta=  0             					&\text{in}&  \Om \times (0,1), \\
   			\noalign{\smallskip} \dis \nabla \cdot \yvec = 0,~ \nabla\times \yvec  = \omvec 				&\text{in}&  \Om \times (0,1), \\
    			\noalign{\smallskip} \dis \yvec\cdot \nvec= (\overline{\yvec}+\mu\yvec_0)\cdot \nvec			&\text{on}&  \Gamma \times (0,1),   \\
    			\noalign{\smallskip} \dis \omvec(0) = \nabla\times\yvec_0,~\theta(0)=\theta_0    				&\text{in}&  \Om
		\end{array}
	\right.
\end{equation}
	possesses at least one solution $(\omvec,\theta,\yvec)$, with
\begin{equation}\label{regom}
	(\omvec,\theta,\yvec)\in  C^0([0,1];\Cvec^{0,\alpha}(\overline\Om;\mathbb{R}^3))\times C^0([0,1];C^{1,\alpha}(\overline\Om))
	\times C^0([0,1];\Cvec^{1,\alpha}(\overline\Om;\mathbb{R}^3)),
\end{equation} 	
	such that
\begin{equation}\label{27pom}
	\theta(\xvec,t)=0 \quad \text{in} \quad \Omega\times(t_{k-1/2},1) \quad \text{and} 
	\quad \omvec(\xvec,t)=0 \quad \text{in} \quad \Omega \times (t_{2k-1/2},1).
\end{equation}
\end{propo}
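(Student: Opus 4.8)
The plan is to mimic the two-dimensional fixed-point scheme of Section~\ref{Sec4}, but with the vorticity now a field obeying the stretching equation $\omvec_t+(\yvec\cdot\nabla)\omvec=(\omvec\cdot\nabla)\yvec-\mathbf{k}\times\nabla\theta$. I start from an arbitrary $\zvec\in\Svec_\nu$, where $\Svec_\nu$ is defined exactly as before (fields in $C^0([0,1];\Cvec^{2,\alpha}(\Ombar;\mathbb{R}^3))$ with $\nabla\cdot\zvec=0$, $\zvec\cdot\nvec=(\overline\yvec+\mu\yvec_0)\cdot\nvec$ on $\Gamma\times(0,1)$, and $\|\zvec-\overline\yvec\|_{0,2,\alpha}\leq\nu$), with $\nu>0$ now the constant furnished by Lemma~\ref{Lemmalocal}. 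Set $\zvec^*=\yvec^*+\pi_3(\zvec-\overline\yvec)$; by the ODE theory its flux $\Zvec^*$ belongs to $C^1([0,1]^2;\Cvec^{2,\alpha}(\overline{\mathscr{O}};\mathbb{R}^3))$. Solve the transport equation $\theta^*_t+\zvec^*\cdot\nabla\theta^*=0$ on $\mathscr{O}\times(0,t_{k-1/2})$ with datum $\pi_1(\theta_0)$; since $\supp\theta^*(\cdot,t)\subset\Zvec^*(\mathscr{O}_0,t,0)$ and $\Ombar\subset\bigcup_iB^i$, Lemma~\ref{Lemmalocal} forces $\theta^*(\cdot,t_{k-1/2})\equiv0$ on $\Om$ (each point of $\overline\Om$ lies in some $B^i$, and $\Zvec^*(B^i,t_{i-1/2},0)\subset\mathscr{O}\setminus\overline{\mathscr{O}}_0$, after which the support stays outside $\mathscr{O}_0$). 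Extend $\theta$ by zero on $[t_{k-1/2},1]$.

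Next I build $\omvec$ in two stages. On $\mathscr{O}\times(0,t_{2k-1/2})$ solve the linear field equation $\omvec^*_t+(\zvec^*\cdot\nabla)\omvec^*=(\omvec^*\cdot\nabla)\zvec^*-\mathbf{k}\times\nabla\theta^*$ with datum $\omvec_0^*:=\nabla\times\pi_3(\yvec_0)$; this is the setting of Lemma~\ref{lemma div-om}, which guarantees $\nabla\cdot\omvec^*\equiv0$ and existence of a stream potential. The right-hand side $-\mathbf{k}\times\nabla\theta^*$ is divergence-free and supported where $\theta^*$ is, so for $t\geq t_{k-1/2}$ it vanishes and $\omvec^*$ is merely transported–stretched by $\zvec^*$; arguing as for $\theta^*$ with the second family of inclusions in Lemma~\ref{Lemmalocal} ($\Zvec^*(B^i,t_{k+i-1/2},1/2)\subset\mathscr{O}\setminus\overline{\mathscr{O}}_0$) gives $\omvec^*(\cdot,t_{2k-1/2})\equiv0$ on $\Om$; then extend $\omvec$ by zero on $[t_{2k-1/2},1]$. (Here I use that the support of $\omvec_0^*$ is contained in $\mathscr{O}_0$ by Lemma~\ref{L3p}, so the support-transport argument applies.) Reconstruct $\yvec$ by solving the family of elliptic problems $-\Delta\psi=\omvec-\mu\nabla\times\yvec_0$ in $\Om$, $\psi\times\nvec=0$ (or the appropriate vector elliptic/div-curl system) and setting $\yvec:=\nabla\times\psi+\overline\yvec+\mu\yvec_0$, obtaining $\yvec\in C^0([0,1];\Cvec^{2,\alpha}(\Ombar;\mathbb{R}^3))$ with $\nabla\times\yvec=\omvec$, $\nabla\cdot\yvec=0$, and the prescribed normal trace. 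This defines $F(\zvec)=\yvec$.

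The estimates proceed exactly as in Lemmas~\ref{Sr}, \ref{L-X-est}, \ref{iteracion}: Lemma~\ref{lemma Bardos} applied to the $\theta^*$ and $\omvec^*$ equations gives $\|\theta^*\|_{0,2,\alpha}$ and $\|\omvec^*\|_{0,1,\alpha}$ bounded by $C(\|\yvec_0\|_{2,\alpha}+\|\theta_0\|_{2,\alpha})\exp(C\nu)$ — note the stretching term $(\omvec^*\cdot\nabla)\zvec^*$ is absorbed into the $K\|\zvec^*\|_{1,\alpha}\|\omvec^*\|_{1,\alpha}$-type Gronwall term of Lemma~\ref{lemma Bardos1} in $C^{0,\alpha}$ (one controls $\|\omvec^*\|_{0,\alpha}$, using $\|\zvec^*\|_{1,\alpha}\leq\|\zvec^*\|_{2,\alpha}$ for the coefficient), so that $F(\Svec_\nu)\subset\Svec_\nu$ for $\delta$ small. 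For contraction, writing $\wvec^*=\zvec^{*,1}-\zvec^{*,2}$, the differences $\Theta^*$, $\Psivec^*:=\omvec^{*,1}-\omvec^{*,2}$ satisfy transport equations whose forcing terms are $-\wvec^*\cdot\nabla\theta^{*,2}$ and $-(\wvec^*\cdot\nabla)\omvec^{*,2}+(\Psivec^*\cdot\nabla)\zvec^{*,1}+(\omvec^{*,2}\cdot\nabla)\wvec^*-\mathbf{k}\times\nabla\Theta^*$; Lemma~\ref{lemma Bardos1} in the $C^{0,\alpha}$ norm plus Gronwall gives $\|\Psivec^*(\cdot,t)\|_{0,\alpha}\leq\widetilde C\int_0^t\|\wvec^*(\cdot,s)\|_{1,\alpha}\,ds$, hence the analogue of \eqref{eq-fix-p} for $\|\omvec^1-\omvec^2\|_{0,\alpha}$, hence (via equivalence of $\|\wvec\|_{1,\alpha}$ and $\|\nabla\times\wvec\|_{0,\alpha}$ on divergence-free tangent fields) the iterated bound $(\widetilde Ct)^m/m!$ of Lemma~\ref{iteracion}. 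Theorem~\ref{fixedpoint} with $B_1=C^0([0,1];\Cvec^{1,\alpha}(\Ombar;\mathbb{R}^3))$, $B_2=C^0([0,1];\Cvec^{2,\alpha})$, $B=\Svec_\nu$ then yields a fixed point $\yvec$, which together with its vorticity $\omvec$ and temperature $\theta$ solves \eqref{p-ext1dim3}, \eqref{regom}, \eqref{27pom}.

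The main obstacle, and the only genuinely new point compared with the 2D argument, is controlling the vorticity-stretching term $(\omvec\cdot\nabla)\yvec$: it is quadratic in the unknown and, unlike the 2D case, does not disappear. The fix is that in the fixed-point iteration $\yvec$ is replaced by the frozen field $\zvec^*$, so the term becomes $(\omvec^*\cdot\nabla)\zvec^*$, which is \emph{linear} in $\omvec^*$ with coefficient controlled by $\|\zvec^*\|_{1,\alpha}\leq\|\zvec^*\|_{2,\alpha}$; this is precisely the structure covered by Lemma~\ref{lemma div-om} (which additionally certifies that $\omvec^*$ stays divergence-free, so that $\yvec$ can be recovered) and by the Gronwall estimates of Lemma~\ref{lemma Bardos1}. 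One must also be careful that the support-transport argument that kills $\theta$ after $t_{k-1/2}$ and $\omvec$ after $t_{2k-1/2}$ uses the \emph{full} covering $\Ombar\subset\bigcup_iB^i$ together with both families of inclusions in Lemma~\ref{Lemmalocal}, applied successively over the $2k$ subintervals — this is where the particular time-grid \eqref{7} and the piecewise definition \eqref{betay*} of $\phi$ enter.
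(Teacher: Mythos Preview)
Your support-transport argument for killing $\theta$ and $\omvec$ has a genuine gap. You claim that once $\Zvec^*(B^i,t_{i-1/2},0)\subset\mathscr{O}\setminus\overline{\mathscr{O}}_0$, ``the support stays outside $\mathscr{O}_0$''. This is false for the 3D reference field $\yvec^*$: by construction \eqref{betay*}, on $[t_{i-1/2},t_i]$ the potential is $-8k\,\phi^i(\cdot,8k(t_i-t))$, i.e.\ the \emph{time-reversed} flow of $\phi^i$, so particles that left $\overline{\mathscr{O}}_0$ at $t_{i-1/2}$ are carried back to their starting point by time $t_i$ (indeed $\Yvec^*(\xvec,t_i,t_{i-1})=\xvec$). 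Unlike the 2D construction, where $t\mapsto\varphi(\Yvec^*(\xvec,t,s))$ is monotone and particles that exit $\Om_1$ never return, the 3D flow is a concatenation of ``go out via $\phi^i$, come back'' over $i=1,\dots,2k$. Hence at time $t_{k-1/2}$ one has $\Yvec^*(\xvec,t_{k-1/2},0)=\mathbf{\Phi}^k(\xvec,1,0)$, which is only guaranteed to lie outside $\overline{\mathscr{O}}_0$ for $\xvec\in B^k$; the portions of $\pi_1(\theta_0)$ supported in the other balls $B^j$, $j<k$, have already been brought back inside, and $\theta^*(\cdot,t_{k-1/2})$ need not vanish on $\Om$. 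The same objection applies verbatim to your argument for $\omvec^*(\cdot,t_{2k-1/2})$.

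The paper repairs this by decomposing the datum through a partition of unity $\{\psi^i\}$ subordinate to $\{B^i\}$, solving separately for each $\theta^i$, and then \emph{resetting} $\hat\theta$ in $\overline{\mathscr{O}}$ at each time $t_{i-1/2}$ by discarding the piece $\theta^i$ that has just exited $\Om$ (see \eqref{eqthetatheta}). This redefinition is discontinuous in $\overline{\mathscr{O}}$ but continuous when restricted to $\Ombar$ precisely because $\theta^i(\cdot,t_{i-1/2})=0$ there, and once dropped that piece cannot be transported back in. The vorticity is handled by the same mechanism on $[1/2,1]$ via \eqref{om-hat}--\eqref{eqomom}. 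Your treatment of the stretching term $(\omvec^*\cdot\nabla)\zvec^*$, the appeal to Lemma~\ref{lemma div-om} for divergence-freeness, the div-curl reconstruction of $\yvec$, and the contraction estimates are all fine and match the paper; the missing ingredient is exactly this partition-and-reset step, without which \eqref{27pom} does not follow.
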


	Let us give the proof of this result.
	We will repeat the strategy of proof of Proposition~\ref{P1}, but we will have to incorporate some ideas from Bardos and~Frisch~\cite{Bardos} and~Glass~\cite{Glass3};
	this is mainly due to the complexity of the field $\mathbf{y}^*$ in this case.
	We will use the notation in~Section~\ref{sec-3-traj}.
		
	First, let us denote by $\Rvec'$ the set of fields $\zvec\in C^0([0,1];\Cvec^{2,\alpha}(\overline\Om;\mathbb{R}^3))$ such that $\nabla\cdot\zvec=0$ in 	$\Om \times (0,1)$ and $\zvec\cdot\nvec=(\overline\yvec+\mu\yvec_0)\cdot\nvec$ on $\Gamma\times(0,1)$.
	Then, for any $\nu>0$, we set
$$
	\Rvec_\nu = \{\, \zvec\in \Rvec': \|\zvec-\overline\yvec\|_{0,1,\alpha}\leq\nu\,\}.
$$

	Let $\nu>0$ be the constant furnished by Lemma~\ref{Lemmalocal}.
	As before, if the initial datum $\yvec_0$ is sufficiently small in $\Cvec^2(\Ombar;\mathbb{R}^3)$, the set $\Rvec_\nu$ is nonempty.

	Now, we are going to construct a mapping $F: \Rvec_\nu\rightarrow \Rvec_\nu$.
	
	We start from an arbitrary $\zvec \in \Rvec_\nu$ and we set $\zvec^*:= \yvec^* + \pi_3(\zvec - \overline{\yvec})$.
	Then, we denote by $\theta^*$ the unique solution to
\[
	\left\{
		\begin{array}{lll}
			\theta^*_t+\zvec^*\cdot\nabla\theta^*=0                                  			& \hbox{in }& \overline{\mathscr{O}} \times [0,1/2], \\
			\theta^*(\xvec,0)=\sum_{i=1}^{k}\psi^i(\xvec) \, \pi_1(\theta_0)(\xvec)          & \hbox{in }&  \overline{\mathscr{O}}.
		\end{array}
	\right.
\]
	Obviously, $\theta^*=\sum_{i=1}^{k}\theta^{i}$, where $\theta^i$ is the unique solution to
\begin{equation}\label{thetai}
	\left\{
	\begin{array}{ll}
	\theta^i_{t}+\zvec^*\cdot\nabla\theta^i=0                & \hbox{in } \ \overline{\mathscr{O}} \times [0,1/2], \\
	\theta^i(\xvec,0)=\psi^i(\xvec) \, \pi_1(\theta_0)(\xvec)          & \hbox{in } \ \overline{\mathscr{O}} .
	\end{array}
	\right.
\end{equation}

	The identities 
$$
	\theta^{i}(\Zvec^*(\xvec,t,0),t) = \psi^{i}(\xvec)\pi_1(\theta_0)(\xvec)\quad \forall(\xvec,t)\in \overline{\mathscr{O}} \times [0,1/2]
$$ 
imply that
\[
	\text{Supp\,}~\theta^{i}(\cdot\,,t)\subset\Zvec^*(B^i,t,0)\quad \forall t\in[0,1/2]. 
\]
	Hence, in view of Lemma~\ref{Lemmalocal}, we deduce that
\[
	\text{Supp\,}~\theta^{i}(\cdot\,,t_{i-1/2})\subset\Zvec^*(B^i,t_{i-1/2},0)\subset \mathscr{O}\setminus\overline{\mathscr{O}}_0,
\]
	whence 
\begin{equation}\label{thetai=0}
	\theta^i(\cdot\,,t_{i-1/2})=0\quad \hbox{in}\quad \Ombar.
\end{equation}

	Now, we set $\hat\theta(\xvec,t):=\theta^*(\xvec,t)$ in $\overline{\mathscr{O}} \times[0,t_{0}]$ and we say that, in~$\overline{\mathscr{O}}\times [t_0,1/2]$, $\hat\theta$~is the unique solution to
\begin{equation}\label{eqthetatheta}
	\left\{
		\begin{array}{lll} \dis
		\hat\theta_t+\zvec^*\cdot\nabla\hat\theta=0 \!&\hbox{in}&\!
		\overline{\mathscr{O}}\times\left( [t_0,1/2]\setminus\bigcup\limits_{i=1}^{k}\{ t_{i-{1\over2}}\}\right),\\ 
		\dis
		\hat\theta(\xvec,t_{i-1/2}) = \sum\limits_{l=i}^{k}\theta^{l}(\xvec,t_{i-1/2})-\theta^{i}(\xvec,,t_{i-1/2}) 
		\!&\hbox{in}&\!\overline{\mathscr{O}} ,\, 1 \leq i \leq k.
	\end{array}
	\right.
\end{equation}

	We notice that $\hat\theta(\cdot\,,t_{k-1/2})\equiv0$ in $\overline{\mathscr{O}}$. 
	Hence, $\hat\theta\equiv0$ in $\overline{\mathscr{O}}\times [t_{k-1/2},1/2]$.
	Moreover,
\[
	\hat\theta(\xvec,t) =
	\sum\limits_{l=i}^{k}\theta^{l}(\xvec,t)-\theta^{i}(\xvec,t)~\hbox{ in }~\overline{\mathscr{O}}\times(t_{i-1/2},t_{i+1/2}), 
	\ \ 1 \leq i \leq k-1.
\]
	Note that the lateral limits of $\hat \theta$ at the points $\{t_{i-1/2}\}_{i=1}^k$ are not necessarily the same in the whole domain 
	$\overline{\mathscr{O}}$. 
	
	Let $\theta$ be the restriction of $\hat \theta$ to $\Ombar$.
	Due to \eqref{thetai=0} and \eqref{eqthetatheta}, we see that $\theta$ is continuous at the $t_{i-1/2}$ with~$i=1, \dots k$, satisfies
\begin{equation}\label{theta}
\left\{
\begin{array}{ll}
	\theta_{t}+\zvec\cdot\nabla\theta=0 		& \hbox{in} \ \Om \times (0,1/2), \\
	\theta(\xvec,0)=\theta_0(\xvec)                  & \hbox{in} \ \Om
\end{array}
\right.
\end{equation}
	and belongs to $C^0([0,1];C^{1,\alpha}(\overline\Om))$.

	In a similar way, we can introduce a field $\widehat\omvec$ in $\overline{\mathscr{O}} \times [0,1]$ whose restriction to $\Omega$ is a function $\omvec$ satisfying the first PDE in~\eqref{p-ext1dim3} with $\mathbf{y}$ replaced by~$\mathbf{z}$.
	The definition of $\widehat \omvec$ will be made in three parts, respectively associated to the three time intervals $[0,1/2)$, $[1/2,t_{k+1/2})$ and $[t_{k+1/2},1]$.

	Let us introduce $\omvec_0:=\nabla\times(\pi_3(\yvec_0))$ and let $\omvec^*$ be the solution to
\[
	\left\{
	\begin{array}{lll}
	\omvec^*_{t}+(\zvec^*\cdot\nabla)\omvec^* = (\omvec^*\cdot\nabla)\zvec^*-(\nabla\cdot \zvec^*)\omvec^*
	- \overrightarrow{\textbf{k}}\times\nabla\pi_1(\theta)  &\hbox{in}& \mathscr{O} \times (0,1/2), \\
	\noalign{\smallskip} \dis
	\omvec^*(\xvec,0) = \omvec_0(\xvec)                         &\hbox{in}& \mathscr{O}.                \\
\end{array}
\right.
\]
	
	With this $\omvec^*$, we set $\omvec^{**}_{1/2}\in \Cvec^{1,\alpha}(\Ombar)$ with $\omvec^{**}_{1/2}(\xvec):=\omvec^*(\xvec\,,1/2)$~for all~$\xvec\in \Ombar$.
	Let us consider the solution $\omvec^{**}$ to the problem
\begin{equation}\label{om-hat}
	\left\{
	\begin{array}{lll}
	\omvec^{**}_t+(\zvec^*\cdot\nabla)\omvec^{**} =
	(\omvec^{**}\cdot\nabla)\zvec^*-(\nabla\cdot\zvec^*)\omvec^{**} &\hbox{in}& \mathscr{O} \times (1/2,1),\\
	\omvec^{**}(\xvec,1/2) = \sum\limits_{i=1}^k\psi^i(\xvec) \,\pi_3(\omvec^{**}_{1/2})(\xvec) &\hbox{in}& \mathscr{O}.
  \end{array}
    \right.
\end{equation}
	As before, we can decompose $\omvec^{**}$ as a sum of functions.
	More precisely, let $\omvec^1, \dots,\omvec^k$ be the solutions to the problems
\begin{equation}\label{eqomi}
\left\{
\begin{array}{lll}
	\omvec^i_{t}+(\zvec^*\cdot\nabla)\omvec^i =
	(\omvec^i\cdot\nabla)\zvec^*-(\nabla\cdot\zvec^*)\omvec^i &\hbox{in}& \mathscr{O} \times (1/2,1),\\
	\omvec^i(\xvec,1/2) = \psi^i(\xvec) \,\pi_3(\omvec^{**}_{1/2})(\xvec) &\hbox{in}& \mathscr{O} .
  \end{array}
    \right.
\end{equation}
	Then
$$
	\omvec^{**}=\sum\limits_{i=1}^{k}\omvec^i\quad\hbox{ in }\quad\overline{\mathscr{O}} \times [1/2,1].
$$
	Each $\omvec^i$ satisfies
\[
    \omvec^i(\Zvec^*(\xvec,t,1/2),t) \!=\! \omvec^i(\xvec,1/2)
    \!+\! \int_{1/2}^t[(\omvec^i\cdot\nabla)\zvec^* \!-\! (\nabla\cdot\zvec^*)\omvec^i](\Zvec^*(\xvec,\sigma,1/2),\sigma)\,d\sigma.
\]
	Consequently,
\[
    |\omvec^i(\Zvec^*(\xvec,t,1/2),t)|\leq|\omvec^i(\xvec,1/2)|
    +C\|\zvec^*\|_{0,1,0}\int_{1/2}^t|\omvec^i(\Zvec^*(\xvec,\sigma,1/2),\sigma)|\,d\sigma.
\]

	Notice that, if $\xvec\not\in B^i$ we then have
\[
    |\omvec^i(\Zvec^*(\xvec,t,1/2),t)|\leq C\|\zvec^*\|_{0,1,0}\int_{1/2}^t|\omvec^i(\Zvec^*(\xvec,\sigma,1/2),\sigma)|\,d\sigma
\]
	and, from {\it Gronwall's Lemma,} we necessarily have
\[
\begin{array}{c}
	\omvec^i(\Zvec^*(\xvec,t,1/2),t)=0 \quad \forall (\xvec,t)\in (\overline{\mathscr{O}} \setminus B^i)\times[1/2,1].
\end{array}
\]

	A consequence is that $\Supp~\omvec^i(\cdot\,,t) \subset\Zvec^*(B^i,t,1/2)$, whence we 
	get 
$$
	\omvec^i(\xvec,t_{k+i-1/2})=0 \quad\hbox{for all}\quad\xvec\in\Ombar.
$$

	Then, we simply set $\widehat\omvec(\xvec,t):=\omvec^*(\xvec,t)$ in $\overline{\mathscr{O}} \times[0,1/2]$ and $\widehat\omvec(\xvec,t):=\omvec^{**}(\xvec,t)$ in $\overline{\mathscr{O}} \times[1/2,t_{k+1/2}]$ and we say that, in $\mathscr{O} \times [t_{k+1/2},1]$, $\widehat\omvec$ is the unique solution to
\begin{equation}\label{eqomom}
	\left\{
	\begin{array}{lll}	
	{\widehat\omvec}_{t} \!+\! (\zvec^*\cdot\nabla){\widehat\omvec} \!=\!
	({\widehat\omvec}\cdot\nabla)\zvec^* \!-\! (\nabla\cdot\zvec^*){\widehat\omvec}
	&\!\!\!\hbox{in }&\!\!\!\!\!\!\overline{\mathscr{O} }\!\times\!\left([t_{k \!+\! 1/2},1] \!\setminus\! \bigcup\limits_{i=1}^{k} \{t_{k \!+\! i \!-\! 1/2}\}\right)
	\\ \dis
	\widehat\omvec(\xvec,t_{k \!+\! i \!-\! \frac{1}{2}}) \!=\! \sum\limits_{l \! =\! i}^{k}\widehat\omvec^{l}(\xvec,t_{k \!+\! i \!-\! \frac{1}{2}}) \!-\! \widehat\omvec^{i}(\xvec,t_{k \!+\! i \!-\! \frac{1}{2}})
	&\!\!\!\hbox{in }&\!\!\!\!\!\!\overline{\mathscr{O}},\,~1 \leq i \leq k.
	\end{array}
	\right.
\end{equation}

	We notice that $\widehat\omvec(\mathbf{x},t_{2k-1/2})\equiv0$ in $\overline{\mathscr{O}}$.
	Therefore, $\widehat\omvec(\mathbf{x},t) \equiv 0$ in $\overline{\mathscr{O}}\times [t_{2k-1/2},1]$.
	Moreover,
\[
	\widehat\omvec(\xvec,t) =
	\sum\limits_{l=i}^{k}\omvec^{l}(\xvec,t)-\omvec^{i}(\xvec,t)~\hbox{ in }~\overline{\mathscr{O}}\times(t_{k+i-1/2},t_{k+i+1/2}),
	\ \ 1 \leq i \leq k-1.
\]

	Let $\omvec$ be the restriction of $\widehat\omvec$ to $\Ombar\times[0,1]$. 
	It belongs to $C^0([0,1];\Cvec^{1,\alpha}(\overline\Om;\mathbb{R}^3))$, satisfies
\[
	\left\{
	\begin{array}{lll} \dis
	{\omvec}_{t}+(\zvec\cdot\nabla){\omvec} =
	({\omvec}\cdot\nabla)\zvec - \overrightarrow{\textbf{k}} \times \nabla\theta &\hbox{ in }& \Om \times [0,1],
	\\ \dis
	\omvec(\xvec,0) = (\nabla \times \yvec_0)(\xvec) & \hbox{ in }& \Omega
	\end{array}
	\right.
\]
and, also, $\omvec(\mathbf{x},t) \equiv 0$~in~$\Ombar \times [t_{2k-1/2},1]$.

	Thanks to Lemma~\ref{lemma div-om}, $\omvec$ is divergence-free in $\Om\times(0,1)$.
	Consequently, from classical results, we know that there exists exactly one $\yvec$ in $C^0([0,1];\Cvec^{2,\alpha}(\overline\Om;\mathbb{R}^3))$ such that
\begin{equation}\label{yvecgorro}
\left\{
\begin{array}{lll}
	\nabla\times \yvec=\omvec , \ \ \nabla\cdot  \yvec=0   & \hbox{in}& \overline{\Omega}\times(0,1),\\
	\yvec\cdot\nvec = (\mu \yvec_0+\overline{\yvec})\cdot \nvec   & \hbox{on}& \Gamma\times(0,1).
\end{array}
\right.
\end{equation}
	Since $\yvec$ is uniquely determined by $\zvec$, we write $F(\zvec) = \yvec$.
	The mapping $F: \Rvec_\nu \mapsto \Rvec'$ is thus well defined.

	In view of some estimates similar to those in the two-dimensional case, we see that the initial data can be chosen small enough to have $F(\Rvec_\nu) \subset \Rvec_\nu$.
	More precisely, one has:

\begin{lemma}\label{Lemma-data-small}
	There exists $\delta>0$ such that, if $\{\|\yvec_0\|_{2,\alpha},\|\theta_0\|_{2,\alpha}\}\leq\delta$, one has $F(\zvec) \in \Rvec_\nu$ for all $\zvec \in \Rvec_\nu$.
\end{lemma}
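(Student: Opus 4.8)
The strategy is to reproduce, in the three-dimensional setting, the estimate chain that led to Lemma~\ref{Sr} in the 2D case, tracking the norms through each of the auxiliary transport problems that define $F(\zvec)$. First I would fix $\zvec \in \Rvec_\nu$ and set $\zvec^* = \yvec^* + \pi_3(\zvec - \overline\yvec)$, so that, by the continuity of $\pi_3$ (Lemma~\ref{L3p}), one has $\|\zvec^*(\cdot\,,t)\|_{2,\alpha} \leq \|\yvec^*(\cdot\,,t)\|_{2,\alpha} + C\|(\zvec-\overline\yvec)(\cdot\,,t)\|_{2,\alpha}$ for all $t \in [0,1]$, exactly as in~\eqref{ybarra*}. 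Since $\Zvec^*$ is the flux of $\zvec^*$, the classical ODE theory gives $\Zvec^* \in C^1([0,1]\times[0,1];\Cvec^{2,\alpha}(\overline{\mathscr{O}};\mathbb{R}^3))$, which guarantees that all the transport problems $\theta^i$, $\hat\theta$, $\omvec^*$, $\omvec^i$, $\omvec^{**}$ and $\widehat\omvec$ have unique solutions in the asserted spaces.

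Next I would apply Lemma~\ref{lemma Bardos} (the Gronwall-type estimate for transport equations) successively. Applied to \eqref{thetai}, it yields
\[
\|\theta^i(\cdot\,,t)\|_{2,\alpha} \leq \|\psi^i\,\pi_1(\theta_0)\|_{2,\alpha}\,\exp\Bigl(K\!\int_0^t\|\zvec^*(\cdot\,,\tau)\|_{2,\alpha}\,d\tau\Bigr) \leq C\|\theta_0\|_{2,\alpha}\,e^{K\int_0^1\|\zvec^*\|_{2,\alpha}},
\]
and, since $\theta$ is a finite sum of (restrictions of) the $\theta^i$ and of solutions of homogeneous transport problems built from them, one gets $\|\theta\|_{0,1,\alpha} \leq C\|\theta_0\|_{2,\alpha}\,e^{K\int_0^1\|\zvec^*\|_{2,\alpha}}$. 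For the vorticity field, Lemma~\ref{lemma Bardos} applied to the equation for $\omvec^*$ (whose right-hand side contains the stretching term $(\omvec^*\cdot\nabla)\zvec^* - (\nabla\cdot\zvec^*)\omvec^*$, linear in $\omvec^*$, plus the forcing $-\mathbf{k}\times\nabla\pi_1(\theta)$) gives
\[
\|\omvec^*(\cdot\,,t)\|_{1,\alpha} \leq C\bigl(\|\yvec_0\|_{2,\alpha} + \|\theta\|_{0,1,\alpha}\bigr)\exp\Bigl(K\!\int_0^1\|\zvec^*(\cdot\,,\tau)\|_{2,\alpha}\,d\tau\Bigr),
\]
and the same structure for $\omvec^i$, $\omvec^{**}$ and $\widehat\omvec$ (homogeneous equations after time $1/2$, so no new forcing) propagates this bound, giving $\|\omvec\|_{0,1,\alpha} \leq C(\|\yvec_0\|_{2,\alpha} + \|\theta_0\|_{2,\alpha})\,e^{2K\int_0^1\|\zvec^*\|_{2,\alpha}}$. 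Finally, from \eqref{yvecgorro} and elliptic regularity for the stream-potential (as in \eqref{Phi}), $\|F(\zvec)(\cdot\,,t) - \overline\yvec(\cdot\,,t)\|_{2,\alpha} \leq C(\|\omvec(\cdot\,,t)\|_{1,\alpha} + \|\yvec_0\|_{2,\alpha})$. Combining everything and using $\|\zvec^*(\cdot\,,t)\|_{2,\alpha} \leq \|\yvec^*(\cdot\,,t)\|_{2,\alpha} + C\nu$ from the definition of $\Rvec_\nu$, I obtain
\[
\|F(\zvec) - \overline\yvec\|_{0,2,\alpha} \leq C_1\bigl(\|\yvec_0\|_{2,\alpha} + \|\theta_0\|_{2,\alpha}\bigr)\,e^{C_2\nu}.
\]
Choosing $\delta > 0$ with $2C_1\delta\,e^{C_2\nu} \leq \nu$ then forces $F(\zvec) \in \Rvec_\nu$ whenever $\max\{\|\yvec_0\|_{2,\alpha},\|\theta_0\|_{2,\alpha}\} \leq \delta$.

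The main obstacle, and the place requiring genuine care rather than mechanical copying of the 2D argument, is the vorticity-stretching term $(\omvec\cdot\nabla)\yvec$: unlike the scalar vorticity in 2D, here the equation for $\omvec^*$ is only quasi-linear in a mild sense, and one must verify that Lemma~\ref{lemma Bardos} can indeed be applied with $g = (\omvec^*\cdot\nabla)\zvec^* - (\nabla\cdot\zvec^*)\omvec^* - \mathbf{k}\times\nabla\pi_1(\theta)$ treated as a source whose $C^{1,\alpha}$-norm is controlled linearly by $\|\omvec^*\|_{1,\alpha}$ (times $\|\zvec^*\|_{2,\alpha}$) plus $\|\theta\|_{2,\alpha}$ — this is why $\Cvec^{2,\alpha}$-regularity of $\zvec$ is needed, and why $\nabla\cdot\zvec^* \neq 0$ in general (because of the extension $\pi_3$) does not break the estimate, as that term is still lower order. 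A secondary technical point is that $\theta$ and $\omvec$ are defined piecewise on the time subintervals $[t_{i-1/2},t_{i+1/2}]$ with possibly discontinuous lateral limits of $\widehat\theta$, $\widehat\omvec$ on $\overline{\mathscr{O}}\setminus\overline\Om$; one must check that the restrictions to $\overline\Om$ remain continuous in $t$ (this uses \eqref{thetai=0}, the analogous vanishing of $\omvec^i$ outside $B^i$, and the support localization coming from Lemma~\ref{Lemmalocal}), so that the final norms $\|\theta\|_{0,1,\alpha}$ and $\|\omvec\|_{0,1,\alpha}$ are meaningful and bounded by the maximum of the piecewise bounds. Once these two points are settled, the smallness argument is identical to that of Lemma~\ref{Sr}.
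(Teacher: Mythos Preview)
Your proposal is correct and is exactly the approach the paper has in mind: the paper gives no independent proof of Lemma~\ref{Lemma-data-small}, merely remarking that it follows from ``estimates similar to those in the two-dimensional case,'' i.e.\ the proof of Lemma~\ref{Sr}, and your plan carries out precisely that adaptation, with the two genuinely new ingredients (the vorticity-stretching term absorbed into the Gronwall factor and the piecewise-in-time construction from Section~\ref{sec-3-traj}) correctly isolated and handled. One small caveat worth flagging: the paper defines $\Rvec_\nu$ via the constraint $\|\zvec-\overline\yvec\|_{0,1,\alpha}\leq\nu$ rather than $\|\cdot\|_{0,2,\alpha}$ as in the 2D set $\Svec_\nu$, so your step $\|\zvec^*(\cdot,t)\|_{2,\alpha}\leq\|\yvec^*(\cdot,t)\|_{2,\alpha}+C\nu$ does not literally follow from membership in $\Rvec_\nu$ as written --- this appears to be a typo in the paper, and your argument is the correct one for the intended $C^{2,\alpha}$-ball.
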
	

	The end of the proof of Proposition~\ref{P2} is very similar to the final part of Section~\ref{Sec4}.
	
	Essentially, what we have to prove is that, for some $m \geq 1$, $F^m$ is a contraction for the usual norm in~$C^0([0,1];\Cvec^{1,\alpha}(\overline\Om;\mathbb{R}^3))$.
	Indeed, after this, we can apply Theorem~\ref{fixedpoint} with $B_1 = C^0([0,1];\Cvec^{1,\alpha}(\overline{\Om};\mathbb{R}^3))$, $B_2 = C^0([0,1];\Cvec^{2,\alpha}(\overline{\Om};\mathbb{R}^3))$, $B = \Rvec_\nu$ and~$G = F$ and deduce the existence of a fixed-point of the extension $\widetilde F$ in the closure of $\Rvec_\nu$ in~$C^0([0,1];C^{1,\alpha}(\overline{\Om};\mathbb{R}^3))$.
	
	But this can be done easily, arguing as in the proof of Lemma~\ref{iteracion}.
	For brevity, we omit the details.
	
%%%%%%%%%%%%%%%%%%%%%%%%%%%%%%%%%%%%%%%%%%
%%%% SECTION 6
%%%%%%%%%%%%%%%%%%%%%%%%%%%%%%%%%%%%%%%%%%

\section{Proof of Theorem~\ref{T-HEAT-INV-BOUS}}\label{Sec6}

	Theorem~\ref{T-HEAT-INV-BOUS} is an easy consequence of the following result:

\begin{propo}\label{P4}
	For any $T^* > 0$ and any $\yvec_0\in \Cvec_0^{2,\alpha}(\Ombar ;\mathbb{R}^N)$, there exists $\eta>0$ such that, if~$\theta_0 \in C^{2,\alpha}(\Ombar)$, $\theta_0=0$ on~$\Gamma\backslash\gamma$ and~$\|\theta_0\|_{2,\alpha} \leq \eta$, the system
\begin{equation}\label{p-ext111}
	\left\{
\begin{array}{lll}
			\noalign{\smallskip} \dis \yvec_t + (\yvec \cdot \nabla) \yvec
			= -\nabla p + \mathbf{k} \, \theta  &\hbox{ in }& \Om\times(0,T^*),	\\
			\noalign{\smallskip} \dis  \nabla \cdot \yvec = 0  &\hbox{ in }& \Om\times(0,T^*),	\\
			\noalign{\smallskip} \dis \theta_t + \yvec \cdot \nabla \theta = \kappa\, \Delta \theta  &\hbox{ in }& \Om\times(0,T^*),	\\
			\noalign{\smallskip} \dis\yvec\cdot\nvec = 0																&\hbox{ on } &\Gamma\times(0,T^*),\\
			\noalign{\smallskip} \dis \theta=0																		&\hbox{ on } &(\Gamma\backslash\gamma)\times(0,T^*),\\
			\noalign{\smallskip} \dis \yvec(\xvec,0) = \yvec_0(\xvec),~\theta(\xvec,0) = \theta_0(\xvec)   &\hbox{ in } & \Om,			\\
	\end{array}
	\right.
\end{equation}
	possesses at least one solution $\yvec\in C^0([0,T^*];\Cvec^{2,\alpha}(\Ombar;\mathbb{R}^N))$,
	$\theta\in C^{0}([0,T^*]; C^{2,\alpha}(\Ombar ))$ and
	$p\in \mathcal{D'}(\Om\times (0,T^*))$ that satisfies
\begin{equation}\label{thetaT*}
\theta(\xvec,T^*)=0\quad \hbox{in} \quad\Omega.
\end{equation}
\end{propo}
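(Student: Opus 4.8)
The plan is to prove Proposition~\ref{P4} by a fixed-point argument in which the temperature equation is linearized around a frozen velocity field and driven to zero at time $T^*$ by means of Theorem~\ref{NC-Parab}, while the resulting temperature is fed back into the Euler momentum equation as a buoyancy source. Thus I would iterate on the velocity: to each $\hat\yvec$ in a suitable closed ball $\mathcal B_R$ of divergence-free, no-flux fields of $C^0([0,T^*];\Cvec^{2,\alpha}(\Ombar;\mathbb{R}^N))$ with $\hat\yvec(\cdot,0)=\yvec_0$, associate first a controlled temperature $\theta$ and then the Euler solution $\yvec$ with forcing $\mathbf k\,\theta$, and look for a fixed point $\yvec=\Lambda(\hat\yvec)$.

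\textbf{Controlling the temperature.} Since Theorem~\ref{NC-Parab} concerns a distributed control, I would first enlarge $\Om$ to a bounded smooth open set $D$ by attaching a collar across $\gamma$, so that $\Gamma\setminus\gamma\subset\partial D$ and $\omega:=D\setminus\Ombar$ is a nonempty open set; one then extends $\hat\yvec$ to an $\mathbb L^\infty(D\times(0,T^*))$ field (no tangency is needed) and $\theta_0$ to a $C^{2,\alpha}(\overline D)$ function vanishing on $\partial D\setminus\gamma$ (in the spirit of Lemmas~\ref{L3}--\ref{L3p}). Applying Theorem~\ref{NC-Parab} on $D$ with control domain $\omega$ produces a trajectory whose restriction to $\Om$ satisfies the temperature equation with transporting velocity $\hat\yvec$, vanishes on $\Gamma\setminus\gamma$, and is zero at $t=T^*$; its trace on $\gamma$ is the boundary control. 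The estimate \eqref{est-cont} gives $\|\theta\|_{L^2}\le C_0\|\theta_0\|_{L^2}$ with $C_0$ depending on $\hat\yvec$ only through $\|\hat\yvec\|_\infty\le R$; choosing the control via Carleman weights vanishing at $t=T^*$ and bootstrapping parabolic regularity (the interior equation has a $C^{2,\alpha}$ datum and a source controlled in weighted $L^2$) upgrades this to
\[
\theta\in C^0([0,T^*];C^{2,\alpha}(\Ombar)),\qquad \|\theta\|_{0,2,\alpha}\le C(R,T^*)\,\|\theta_0\|_{2,\alpha}.
\]

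\textbf{Solving Euler and closing the fixed point.} With this $\theta$ at hand, I would solve
\[
\yvec_t+(\yvec\cdot\nabla)\yvec=-\nabla p+\mathbf k\,\theta,\quad \nabla\cdot\yvec=0,\quad \yvec\cdot\nvec=0,\quad \yvec(\cdot,0)=\yvec_0,
\]
using the classical well-posedness theory in $\Cvec^{2,\alpha}$ together with the transport estimates of Lemma~\ref{lemma Bardos} (and Lemma~\ref{lemma div-om} when $N=3$) applied to the vorticity; for $N=2$ this gives a solution on all of $[0,T^*]$, for $N=3$ on the local existence interval, with $\|\yvec\|_{0,2,\alpha}$ bounded in terms of $\|\yvec_0\|_{2,\alpha}$, $T^*$ and $\|\theta\|_{0,2,\alpha}$. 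Now fix $R$ large, depending only on $\|\yvec_0\|_{2,\alpha}$ and $T^*$, so that the Euler flows arising with vanishing or sufficiently small forcing stay in $\mathcal B_{R/2}$; then $\|\mathbf k\,\theta\|_{0,2,\alpha}\le C(R,T^*)\,\|\theta_0\|_{2,\alpha}$, and picking $\eta$ small enough in terms of $R$ forces $\Lambda(\mathcal B_R)\subset\mathcal B_R$. A fixed point is then obtained either by the Schauder theorem --- the image lies in a bounded set of $C^0([0,T^*];\Cvec^{2,\alpha})$ while one uses the weaker $C^0([0,T^*];\Cvec^{1,\alpha})$ topology, in which $\Lambda$ is continuous --- or by a contraction estimate for some iterate $\Lambda^m$ along the lines of Lemmas~\ref{L-X-est}--\ref{iteracion}. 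Recovering $p$ from the momentum equation then yields the triplet $(\yvec,p,\theta)$ solving \eqref{p-ext111} with $\theta(\cdot,T^*)=0$.

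\textbf{Main obstacle.} The decisive difficulty is the one already signalled in the Introduction: the control cost $C_0$ in Theorem~\ref{NC-Parab} behaves like $\exp\!\big(C\,\|\hat\yvec\|_\infty^2\big)$, so the temperature estimate deteriorates exponentially with the size of the transporting velocity. This is why $R$ must be pinned down a priori from $\yvec_0$ and $T^*$ alone (through the well-posedness of the Euler system), why $\eta$ is forced to depend on $\|\yvec_0\|_{2,\alpha}$, and why no global-in-$\theta_0$ statement can be reached this way; in three dimensions it also couples with the finite-time existence of the Euler flow. A secondary, purely technical point is to ensure that the controlled temperature trajectory is genuinely $C^0([0,T^*];C^{2,\alpha}(\Ombar))$ --- regular up to $\gamma$ and up to $t=T^*$ --- which requires a carefully chosen control rather than a mere $L^2$ one, together with a parabolic bootstrap.
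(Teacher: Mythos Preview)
Your argument is essentially correct but inverts the paper's scheme: the paper iterates on the \emph{temperature}, not the velocity. Starting from an arbitrary $\overline\theta\in C^0([0,T^*];C^{1,\alpha}(\overline\Om))$ in the unit ball, the paper first solves the Euler system with forcing $\mathbf k\,\overline\theta$ to obtain $\yvec$, then performs the domain-extension trick and Theorem~\ref{NC-Parab} with transporting field $\pi(\yvec)$ to produce a controlled $\theta$ with $\theta(\cdot,T^*)=0$; the fixed-point map is $\Lambda(\overline\theta)=\theta$, and Schauder is applied using the compact embedding of $W=\{\theta:\theta\in C^0_tC^{2,\alpha}_x,\ \theta_t\in C^0_tC^{0,\alpha}_x\}$ into $C^0_tC^{1,\alpha}_x$ (Aubin--Lions). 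Your route---freeze $\hat\yvec$, control $\theta$, solve Euler, iterate on $\yvec$---is logically equivalent and yields the same smallness condition on $\theta_0$, but it requires an extra preliminary step (fixing $R$ from the unforced Euler flow before choosing $\eta$) and your compactness sketch is slightly incomplete: boundedness of $\Lambda(\mathcal B_R)$ in $C^0_t\Cvec^{2,\alpha}_x$ alone does not give compactness in $C^0_t\Cvec^{1,\alpha}_x$; you need a bound on $\yvec_t$ (which does follow from the momentum equation) to invoke Aubin--Lions. The paper's choice is a bit more economical because the parabolic equation for $\theta$ delivers the time-derivative bound for free, whereas yours has the conceptual advantage of making the dependence of $\eta$ on $\|\yvec_0\|_{2,\alpha}$ (through the control cost $\exp(C\|\hat\yvec\|_\infty^2)$) more transparent.
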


	Indeed, if Proposition \ref{P4} holds, we can consider \eqref{inv-bous} and control first the temperature $\theta$ exactly to zero at a time $T^*<T$.
	To do this, we need initial data as above, that is, $\yvec_0 \in \Cvec_0^{2,\alpha}$ and~$\theta_0 \in C^{2,\alpha}(\Ombar)$ such that $\theta_0=0$ on~$\Gamma\backslash\gamma$ and~$\|\theta_0\|_{2,\alpha} \leq \eta$.
	Then, in a second step, we can apply the results in~\cite{Coron3} and~\cite{Glass3}  to the Euler system in~$\Om \times (T^*,T)$, with initial data $\yvec(\cdot\,,T^*)$. In other words, we can find new controls in $(T^*,T)$ that drive the velocity field exactly to any final state $\yvec_1$.

\

\noindent
{\bf Proof of Proposition~\ref{P4}:}
	For simplicity, we will consider only the case $N=2$.
	We will apply a fixed-point argument that guarantees the existence of a solution to~\eqref{p-ext111}-\eqref{thetaT*}.
	
	We start from an arbitrary $\overline\theta \in C^0([0,T^*];C^{1,\alpha}(\overline\Om))$.
	To this $\overline\theta$, arguing as in~Section~\ref{Sec3}, we can associate a field $\yvec\in C^0([0,T^*];\Cvec^{2,\alpha}(\Ombar;\mathbb{R}^N))$ verifying
\[
\left\{
\begin{array}{ll}
			\noalign{\smallskip} \dis \yvec_t + (\yvec \cdot \nabla) \yvec
			= -\nabla p + \mathbf{k}\,\overline\theta  				&\hbox{ in} \ \Om\times(0,T^*),	\\
			\noalign{\smallskip} \dis  \nabla \cdot \yvec = 0  				&\hbox{ in} \ \Om\times(0,T^*),	\\
			\noalign{\smallskip} \dis  \yvec\cdot\nvec = 0  				&\hbox{ on} \ \Gamma\times(0,T^*),	\\
			\noalign{\smallskip} \dis \yvec(\xvec,0) = \yvec_0(\xvec)		 &\hbox{ in} \ \Om			\\
	\end{array}
\right.
\]
	and
\[
	\|\yvec\|_{0,2,\alpha}\leq C(\|\yvec_0\|_{2,\alpha}+\|\overline\theta\|_{0,2,\alpha}).
\]

	Let $\tilde \Om \subset \mathbb{R}^2$ be a connected open set with boundary $\tilde\Gamma = \partial\tilde\Om$ of class $C^2$ such that $\Om\subset\tilde \Om$ and $\tilde\Gamma \cap \Gamma = \Gamma \setminus \gamma$
	(see~Fig.~\ref{fig1}).
	Let $\om\subset \tilde \Om\setminus\overline\Om$ be a non-empty open subset.
\begin{figure}[h]
% Requires \usepackage{graphicx}
\centering\includegraphics[scale=0.45]{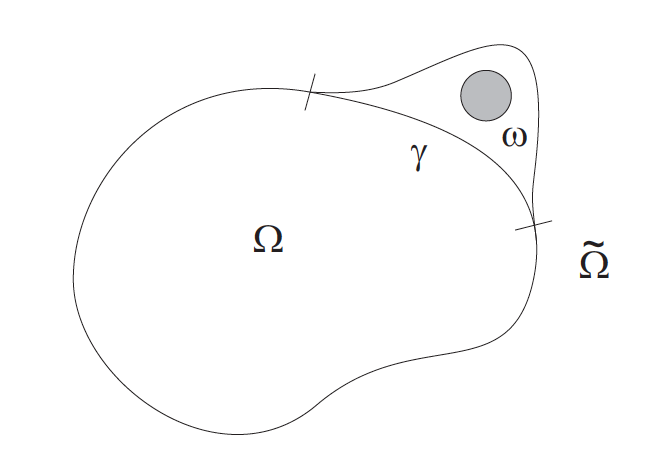}
\caption{The domain $\tilde\Om$ and the subdomain $\om$.}
\label{fig1}
\end{figure}

	Then, as in~Theorem~\ref{NC-Parab}, we associate to $\yvec$ a pair $(\tilde\theta,\tilde v)$ satisfying
\[
\left\{
\begin{array}{lll}
    \noalign{\smallskip}\dis \tilde\theta_t + \pi(\yvec)\cdot\nabla\tilde\theta = \kappa\Delta\tilde\theta + \tilde v 1_{\om}
    &\hbox{in} \ \tilde{\Om} \times (0,T^*),\\
    \noalign{\smallskip} \tilde\theta=0  &\hbox{on} \ \tilde{\Gamma} \times (0,T^*),\\
   \noalign{\smallskip} \tilde\theta(\xvec,0)=\tilde\pi(\theta_0)(\xvec),~~\tilde\theta(\xvec,T^*)=0 &\hbox{in} \ \tilde{\Om},
    \end{array}
\right.
\]
	where $\pi$ and $\tilde \pi$ are extension operators from~$\Om$ into~$\tilde \Om$ that preserve regularity.
	Let $\theta$ be the restriction of~$\tilde \theta$ to~$\overline\Om\times[0,T^*]$.
	Then, $\theta$ satisfies:
\[
\left\{
\begin{array}{ll}
    \noalign{\smallskip}\dis \theta_t + \yvec\cdot\nabla\theta = \kappa\Delta\theta 	&\hbox{in} \ \Om\times(0,T^*),\\
    \noalign{\smallskip} \theta=\tilde\theta 1_{\gamma}             									&\hbox{on} \ \Gamma\times(0,T^*),\\
   \noalign{\smallskip} \theta(\xvec,0)=\theta_0(\xvec),~~\theta(\xvec,T^*)=0		&\hbox{in} \ \Om.
    \end{array}
\right.
\]

%\[
%	\|\theta(\cdot\,,t)\|_{0,\alpha'}\leq \|\theta_0\|_{2,\alpha'}e^{C(\|\yvec_0\|_{2,\alpha}+\|\tilde\theta\|_{0,2,\alpha})}
%	\quad \forall t \in [0,T^*],
%\]
	
	Moreover, from parabolic regularity theory, it is clear that the following inequalities hold:
\[
	\|\theta_t\|_{0,0,\alpha}+\|\theta\|_{0,2,\alpha} \leq C \|\theta_0\|^2_{{2,\alpha}} \, e^{C \|\yvec\|_{0,2,\alpha}}
	\leq C \|\theta_0\|_{{2,\alpha}} \, e^{C(\|\yvec_0\|_{2,\alpha}+\|\overline\theta\|_{0,2,\alpha})}.
\]

	Now, let us introduce the Banach space
\[
	W = \{\, \theta\in C^0([0,T^*]; C^{2,\alpha}(\overline\Om)) : \theta_t\in C^0([0,T^*];C^{0,\alpha}(\overline\Om)) \,\}
\]
	and let us consider the closed ball
$$
	B := \{\, \overline\theta \in  C^0([0,T^*]; C^{1,\alpha}(\overline\Om)) : \|\overline\theta\|_{0,1,\alpha} \leq 1 \,\}
$$
	and the mapping $\Lambda$, with
\[
	\Lambda(\overline\theta) = \theta \quad \forall \overline\theta \in C^0([0,T^*]; C^{1,\alpha}(\overline\Om)).
\]

	Obviously, $\Lambda$ is well defined.
	Furthermore, in view of the previous inequalities, it maps continuously the whole space $C^0([0,T^*]; C^{1,\alpha}(\overline\Om))$ into $W$, that is compactly embedded in~$ C^0([0,T^*]; C^{1,\alpha}(\overline\Om))$, in view of the classical results of the Aubin-Lions kind, see for instance~\cite{Simon}.

	On the other hand, if $\eta > 0$ is sufficiently small
	(depending on $\|\yvec_0\|_{2,\alpha}$) and $\|\theta_0\|_{2,\alpha}\leq \eta$, $\Lambda$ maps $B$ into itself.
	Consequently, the hypotheses of Schauder's Theorem are satisfied and $\Lambda$ possesses at least one fixed-point in $B$.
\Fin

%\noindent
{\bf Acknowledgements:}
	The authors are grateful to the anonymous referees for their valuable comments and suggestions, that have allowed to improve a previous version of the paper.

%%%%%%%%%%%%%%%%%%%%%%%  COMMENTS
%
%
%   SIMPLY CONNECTED
%
%   REGULARITY
%
%   GLOBAL RESULT K>0
%
%

%%%%%%%%%%%%%%%%%%%%%%%%%%%%%%%%%%%%%%%%%%
%%%% REFERENCES
%%%%%%%%%%%%%%%%%%%%%%%%%%%%%%%%%%%%%%%%%%

{%\scriptsize

%\bibliographystyle{siam}
%\bibliography{biblio}
%\end{document}

}

\end{document}